\documentclass{amsart}
\usepackage[normalem]{ulem}
\usepackage[all]{xy}
\usepackage{verbatim}
\usepackage{color}
\usepackage{amsthm}
\usepackage{amssymb}
\usepackage[colorlinks=true]{hyperref}




\setcounter{equation}{0}

\numberwithin{equation}{section}

\newtheorem{theorem}[equation]{Theorem}
\newtheorem*{theorem*}{Theorem} \newtheorem{lemma}[equation]{Lemma}

\newtheorem*{conjecture*}{Mamma Conjecture}
\newtheorem*{conjecture1*}{Mamma Conjecture (revisited)}
\newtheorem{proposition}[equation]{Proposition}
\newtheorem{corollary}[equation]{Corollary}
\newtheorem*{corollary*}{Corollary}

\theoremstyle{remark}
\newtheorem{definition}[equation]{Definition}

\newtheorem{example}[equation]{Example}

\newtheorem{notation}[equation]{Notation}

\theoremstyle{remark}
\newtheorem{remark}[equation]{Remark}

\setcounter{tocdepth}{1}

\newcommand{\cA}{{\mathcal A}}
\newcommand{\cB}{{\mathcal B}}
\newcommand{\cC}{{\mathcal C}}
\newcommand{\cD}{{\mathcal D}}

\newcommand{\cF}{{\mathcal F}}
\newcommand{\cG}{{\mathcal G}}

\newcommand{\cI}{{\mathcal I}}

\newcommand{\cM}{{\mathcal M}}

\newcommand{\cO}{{\mathcal O}}
\newcommand{\cP}{{\mathcal P}}

\newcommand{\cT}{{\mathcal T}}
\newcommand{\cU}{{\mathcal U}}
\newcommand{\cV}{{\mathcal V}}

\newcommand{\cX}{{\mathcal X}}

\newcommand{\cZ}{{\mathcal Z}}

\newcommand{\Spt}{\mathrm{Spt}}

\newcommand{\bbA}{\mathbb{A}}

\newcommand{\bbG}{\mathbb{G}}

\newcommand{\bbN}{\mathbb{N}}
\newcommand{\bbP}{\mathbb{P}}

\newcommand{\bbQ}{\mathbb{Q}}
\newcommand{\bbZ}{\mathbb{Z}}

\def\Sym{\operatorname{Sym}}


\DeclareMathOperator{\id}{id}

\DeclareMathOperator{\Mod}{Mod}
\DeclareMathOperator{\Mot}{Mot}


\DeclareMathOperator{\Fun}{Fun} 


\newcommand{\dgcat}{\mathrm{dgcat}} 

\newcommand{\bbK}{I\mspace{-6.mu}K}

\newcommand{\perf}{\mathrm{perf}}

\newcommand{\dg}{\mathrm{dg}}

\newcommand{\uHom}{\underline{\mathrm{Hom}}}
\newcommand{\Hom}{\mathrm{Hom}}
\newcommand{\End}{\mathrm{End}}

\newcommand{\rep}{\mathrm{rep}}

\newcommand{\Ho}{\mathrm{Ho}}

\newcommand{\Hmo}{\mathrm{Hmo}}
\newcommand{\op}{\mathrm{op}}

\newcommand{\too}{\longrightarrow}


\newcommand{\REnd}{\mathbf{R}\mathrm{End}}

\newcommand{\ie}{\textsl{i.e.}\ }

\def\r{\rightarrow}

\let\oldmarginpar\marginpar
\def\marginpar#1{\oldmarginpar{\tiny #1}}

\begin{document}

\title[The Gysin triangle via localization and $\bbA^1$-homotopy invariance]{The Gysin triangle via \\localization and $\bbA^1$-homotopy invariance}
\author{Gon{\c c}alo~Tabuada and Michel Van den Bergh}

\address{Gon{\c c}alo Tabuada, Department of Mathematics, MIT, Cambridge, MA 02139, USA}
\email{tabuada@math.mit.edu}
\urladdr{http://math.mit.edu/~tabuada}
\thanks{G.~Tabuada was partially supported by a NSF CAREER Award.}
\thanks{M.~Van den Bergh is a senior researcher at the Fund for Scientific Research, Flanders}

\address{Michel Van den Bergh, Departement WNI, Universiteit Hasselt, 3590 Diepenbeek, Belgium}
\email{michel.vandenbergh@uhasselt.be}
\urladdr{http://hardy.uhasselt.be/personal/vdbergh/Members/~michelid.html}

\subjclass[2000]{14A22, 14C15, 14F42, 18D20, 19D55}
\date{\today}

\keywords{Localization, $\bbA^1$-homotopy, dg category, algebraic $K$-theory, periodic cyclic homology, algebraic spaces, motivic homotopy theory, (noncommutative) mixed motives, Nisnevich and {\'e}tale descent, relative cellular spaces, noncommutative algebraic geometry}

\abstract{Let $X$ be a smooth scheme, $Z$ a smooth closed subscheme,
  and $U$ the open complement. Given any localizing and
  $\bbA^1$-homotopy invariant of dg categories~$E$, we construct an
  associated Gysin triangle relating the value of~$E$ at the dg
  categories of perfect complexes of $X$, $Z$, and $U$. In the
  particular case where~$E$ is homotopy $K$-theory, this Gysin
  triangle yields a new proof of Quillen's localization theorem, which
  avoids the use of devissage. As a first application, we prove that
  the value of~$E$ at a smooth scheme belongs to the smallest (thick)
  triangulated subcategory generated by the values of~$E$ at the
  smooth projective schemes. As a second application, we
  compute the additive invariants of relative cellular spaces in terms of the bases of the corresponding cells.
Finally, as a third application, we construct
  explicit bridges relating motivic homotopy theory and mixed motives
  on the one side with noncommutative mixed motives on the other
  side. This leads to a comparison between different motivic Gysin
  triangles as well as to an {\'e}tale descent result concerning
  noncommutative mixed motives with rational coefficients.}}

\maketitle
\vskip-\baselineskip
\vskip-\baselineskip



\section{Introduction and statement of results}
A {\em differential graded (=dg) category $\cA$}, over a base field
$k$, is a category enriched over complexes of $k$-vector spaces; see
\S\ref{sub:dg}. Every (dg) $k$-algebra $A$ gives naturally rise to a
dg category with a single object. Another source of examples is
provided by schemes since the category of perfect complexes $\perf(X)$
of every quasi-compact quasi-separated $k$-scheme $X$ admits a
canonical dg enhancement\footnote{When $X$ is quasi-projective this dg
  enhancement is unique; see \cite[Thm.~2.12]{LO}\cite[Thm.~B]{CS}.}
$\perf_\dg(X)$; see Keller
\cite[\S4.6]{ICM-Keller}. 
Let us denote by $\dgcat(k)$ the category of (essentially small) dg categories, and by $\Hmo(k)$ its localization at the class of Morita equivalences.

A functor $E\colon \dgcat(k) \to \cT$, with values in a triangulated category, is called:
\begin{itemize}
\item[(C1)] a {\em localizing invariant} if it inverts the Morita
  equivalences (or equivalently if it factors through the category $\Hmo(k)$)
  and sends short exact sequences of dg categories (see \S\ref{sub:localizing}) to distinguished triangles
\begin{eqnarray*}
0 \too \cA \too \cB \too \cC \too 0 & \mapsto & E(\cA) \too E(\cB) \too E(\cC) \stackrel{\partial}{\too} \Sigma E(\cA)
\end{eqnarray*}
in a way which is functorial for strict morphisms of exact sequences.
 \item[(C2)] an {\em $\bbA^1$-homotopy invariant} if it inverts the canonical dg functors $\cA \to \cA[t]$, where $\cA[t]$ stands for the tensor product of $\cA$ and $k[t]$.
\end{itemize}
\begin{example}[Homotopy $K$-theory]\label{ex:KH}
Let $\Ho(\Spt)$ be the homotopy~category~of~spectra. Weibel's homotopy $K$-theory gives rise to a functor $KH\colon \dgcat(k) \to \Ho(\Spt)$ which satisfies conditions (C1)-(C2); see \cite[\S2]{Fundamental}\cite[\S5.3]{A1-homotopy}. When applied to $A$, resp. to $\perf_\dg(X)$, this functor computes the homotopy $K$-theory of $A$, resp. of $X$.
\end{example}
\begin{example}[Nonconnective algebraic $K$-theory with coefficients]
  Let $l$ be a prime. When $l \nmid \mathrm{char}(k)$, mod-$l^\nu$
  nonconnective algebraic $K$-theory gives rise to a functor
  $\bbK(-;\bbZ/l^\nu)\colon\dgcat(k) \to \Ho(\Spt)$ which satisfies
  conditions (C1)-(C2); see \cite[\S1]{Kleinian}. When $l \mid \mathrm{char}(k)$, we can also consider the functor
  $\bbK(-)\otimes \bbZ[1/l]$. When applied to $A$,\ resp. to
  $\perf_\dg(X)$, these functors compute the nonconnective algebraic
  $K$-theory with coefficients of $A$, resp.~of~$X$.
\end{example}
\begin{example}[{\'E}tale $K$-theory]\label{ex:etale}
Let $l$ be an odd prime. Dwyer-Friedlander's {\'e}tale $K$-theory gives rise to a functor $K^{\mathrm{et}}(-;\bbZ/l^\nu)\colon \dgcat(k) \to \Ho(\Spt)$ which satisfies conditions (C1)-(C2); see \cite[\S5.4]{A1-homotopy}. When $l \nmid \mathrm{char}(k)$ and $X$ is moreover regular and of finite type over $\bbZ[1/l]$, $K^{\mathrm{et}}(\perf_\dg(X);\bbZ/l^\nu)$ agrees with the {\'e}tale $K$-theory~of~$X$.
\end{example}
\begin{notation}
Let $E\colon \dgcat(k) \to \cC$ be a functor, with values in an arbitrary category, and $X$ a quasi-compact quasi-separated $k$-scheme. In order to simplify the exposition, we will write $E(X)$ instead of $E(\perf_\dg(X))$.
\end{notation}
\begin{example}[Periodic cyclic homology]
  Let $k$ be a field of characteristic zero and $\cD^\pm(k)$ the
  derived category of $\bbZ/2$-graded $k$-vector
  spaces. Periodic cyclic homology
  gives rise to a functor $HP\colon \dgcat(k) \to \cD^\pm(k)$ which
  satisfies conditions (C1)-(C2); see
  \cite[\S1.5]{Exact}\cite[\S3]{Fundamental}. When applied to $A$,
  resp. to $\perf_\dg(X)$, this functor computes the periodic cyclic
  homology of $A$, resp. of $X$. When $X$ is moreover
  smooth, the classical Hochschild-Kostant-Rosenberg theorem yields the following identifications with de Rham cohomology:
\begin{equation}\label{eq:deRham}
HP^+(X)\simeq \bigoplus_{n\,\mathrm{even}} H^n_{dR}(X) \quad\quad HP^-(X)\simeq \bigoplus_{n\,\mathrm{odd}}  H^n_{dR}(X)\,.
\end{equation}
\end{example}
\begin{example}[Noncommutative motives]\label{ex:Mot}
  Let $\Mot(k)$ be the (closed) symmetric monoidal triangulated
  category of noncommutative motives constructed in
  \cite[\S2]{A1-homotopy}; denoted by
  $\Mot^{\bbA^1}_{\mathrm{loc}}(k)$ in {\em loc. cit.} By
  construction, this category comes equipped with a symmetric monoidal
  functor $\mathrm{U}\colon \dgcat(k) \to \Mot(k)$ which satisfies
  conditions (C1)-(C2). Roughly speaking\footnote{The precise formulation of this universal property makes use of the language of derivators.}, $\mathrm{U}$ is the initial
  functor satisfying these conditions and preserving filtered homotopy
  colimits; 
for further information on noncommutative motives we invite the
  reader to consult the recent book \cite{book}.
\end{example}
Our main result is the following:
\begin{theorem}[Gysin triangle]\label{thm:main}
Let $X$ be a smooth $k$-scheme, $i\colon Z \hookrightarrow X$ a smooth closed subscheme, and $j\colon U \hookrightarrow X$ the open complement of $Z$. For every functor $E\colon \dgcat(k) \to \cT$ which satisfies conditions (C1)-(C2), we have an induced triangle
\begin{equation}\label{eq:triangle}
E(Z) \stackrel{E(i_\ast)}{\too} E(X) \stackrel{E(j^\ast)}{\too} E(U) \stackrel{\partial}{\too} \Sigma E(Z)\,,
\end{equation}
where $i_\ast$, resp. $j^\ast$, stands for the push-forward, resp. pull-back, dg functor.
\end{theorem}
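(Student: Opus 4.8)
The plan is to combine Thomason--Trobaugh localization with the deformation to the normal cone, using $\bbA^1$-homotopy invariance precisely where the classical argument invokes d\'evissage. Since $Z$ and $X$ are smooth, the immersion $i$ is regular, so $i_\ast$ preserves perfect complexes; writing $\perf_{\dg,Z}(X)$ for the dg category of perfect complexes on $X$ supported on $Z$, the Thomason--Trobaugh theorem provides a short exact sequence of dg categories $0 \to \perf_{\dg,Z}(X) \to \perf_\dg(X) \xrightarrow{j^\ast} \perf_\dg(U) \to 0$. Condition (C1) then yields a distinguished triangle $E(\perf_{\dg,Z}(X)) \to E(X) \to E(U) \to \Sigma E(\perf_{\dg,Z}(X))$, natural in the pair $(X,Z)$; since $i_\ast$ factors through $\perf_{\dg,Z}(X)$, the whole theorem reduces to the statement that $E(i_\ast)\colon E(Z) \to E(\perf_{\dg,Z}(X))$ is an isomorphism.

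To prove this I would first note that, by Thomason--Trobaugh excision together with Nisnevich (in particular Zariski) descent --- both consequences of (C1) --- the functors $X'\mapsto E(\perf_{\dg,Z\cap X'}(X'))$ and $X'\mapsto E(Z\cap X')$ on open subschemes $X'\subseteq X$ satisfy Mayer--Vietoris and $i_\ast$ is natural between them, so one may argue locally on $X$. I would then bring in the deformation to the normal cone: there is a scheme $M$, flat over $\bbA^1$, with $M|_{\bbG_m}\cong X\times\bbG_m$, with special fibre the normal bundle $p\colon N = N_{Z/X}\to Z$, and carrying a closed subscheme $\widetilde Z = Z\times\bbA^1$ that restricts to $i$ over $1\in\bbA^1$ and to the zero section $s\colon Z\hookrightarrow N$ over $0$. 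Because $\widetilde Z$ is flat over $\bbA^1$, base change along the two fibre inclusions is Tor-independent, so the push-forward $\widetilde i_\ast\colon \perf_\dg(Z\times\bbA^1)\to\perf_{\dg,\widetilde Z}(M)$ restricts to $i_\ast$ over $1$ and to $s_\ast$ over $0$. Feeding the localization triangle of the open immersion $M|_{\bbG_m}\hookrightarrow M$ into (C1), and using the isomorphisms $E(-\times\bbA^1)\isoto E(-)$ and $E(\perf_{\dg,Z}(-\times\bbA^1))\isoto E(\perf_{\dg,Z}(-))$ --- both immediate from (C1) and (C2) --- I would show that restriction to the fibres over $0$ and $1$ induces isomorphisms $E(\perf_{\dg,\widetilde Z}(M))\isoto E(\perf_{\dg,Z}(N))$ and $E(\perf_{\dg,\widetilde Z}(M))\isoto E(\perf_{\dg,Z}(X))$. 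Combined with the compatibility with $\widetilde i_\ast$, this reduces the problem to the vector-bundle case: $E(s_\ast)\colon E(Z)\to E(\perf_{\dg,Z}(N))$ is an isomorphism.

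For the vector-bundle case, Zariski descent on $Z$ allows me to assume $N=\bbA^c_Z$, and I would induct on $c$. When $c=1$ one has $\perf_{\dg,Z}(\bbA^1_Z)=\perf_{\dg,Z\times 0}(Z\times\bbA^1)$, which by excision is equivalent to $\perf_{\dg,Z\times\infty}(Z\times\bbP^1)$; the localization triangle for $Z\times\bbA^1 = (Z\times\bbP^1)\setminus(Z\times\infty)$ then identifies its $E$-value with the fibre of $E(Z\times\bbP^1)\to E(Z\times\bbA^1)$. Now (C1) applied to Beilinson's semi-orthogonal decomposition $\perf_\dg(Z\times\bbP^1)=\langle p^\ast\perf_\dg(Z),\, p^\ast\perf_\dg(Z)(1)\rangle$ gives $E(Z\times\bbP^1)\simeq E(Z)\oplus E(Z)$, and the restriction map $E(Z\times\bbP^1)\to E(Z\times\bbA^1)\simeq E(Z)$ (the latter isomorphism by (C2)) becomes split surjective --- its restriction to either summand being a pullback isomorphism by (C2) --- so that its fibre is a copy of $E(Z)$, and one checks that the resulting identification $E(\perf_{\dg,Z}(\bbA^1_Z))\simeq E(Z)$ is the one induced by the zero-section push-forward $s_\ast$. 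The inductive step writes $\bbA^c_Z=\bbA^1_{\bbA^{c-1}_Z}$ and combines the case $c=1$ over the base $\bbA^{c-1}_Z$ (together with an extra localization to adjust the support from $\bbA^{c-1}_Z$ down to $Z$) with the induction hypothesis.

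The step I expect to be the main obstacle is getting the deformation to the normal cone to interact correctly with the dg categories of perfect complexes \emph{with support}: checking Tor-independence of the relevant squares and showing that restriction to the special and general fibres becomes an isomorphism after applying $E$. This is exactly the point at which conditions (C1) and (C2) jointly take over the role played by d\'evissage in Quillen's proof; one must also keep in mind that $M$ is in general singular, so that one is forced to work throughout with perfect complexes rather than with coherent sheaves.
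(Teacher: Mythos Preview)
Your overall architecture matches the paper's: reduce via Thomason--Trobaugh to proving that $E(i_\ast)\colon E(Z)\to E(\perf_\dg(X)_Z)$ is invertible, then use Mayer--Vietoris (from (C1)) to bootstrap from a local case. The divergence is in that local step. The paper, in the affine case, proves a formality theorem: $\perf_\dg(X)_Z$ is Morita equivalent to $\REnd_X(\cO_X/\cI)$, which via an affine tubular-neighbourhood argument is shown to be formal with cohomology $\Gamma(Z,\bigwedge^\ast_Z(\cI/\cI^2)^\vee)$; a short $\bbN_0$-grading argument --- this is exactly where (C2) enters --- then shows that $E$ only sees the degree-zero part $\cO_Z$. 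You instead invoke the deformation to the normal cone to reduce to a vector bundle, where your $\bbP^1$-computation is correct and amounts to reproving that grading lemma for the trivial exterior algebra.

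The gap is precisely the step you flag, and I do not see how to close it with (C1)--(C2) alone. You need the fibre restrictions $E(\perf_\dg(M)_{\widetilde Z})\to E(\perf_\dg(X)_Z)$ and $E(\perf_\dg(M)_{\widetilde Z})\to E(\perf_\dg(N)_Z)$ to be isomorphisms, but $\bbA^1$-homotopy invariance only controls \emph{trivial} $\bbA^1$-families, and $M\to\bbA^1$ is not a product. If you unwind the localization triangle for $M|_{\bbG_m}\hookrightarrow M$ with support in $\widetilde Z$, the closed-support term is $E(\perf_\dg(M)_{\widetilde Z\cap N})=E(\perf_\dg(M)_Z)$: perfect complexes on $M$ supported on the codimension-$(c{+}1)$ locus $Z\subset M$, and controlling this is another instance of the very d\'evissage statement you are trying to prove. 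In the Morel--Voevodsky setting the deformation argument works because homotopy purity supplies a direct Nisnevich-local weak equivalence $X/(X{\setminus}Z)\simeq N/(N{\setminus}Z)$; there is no analogous a priori comparison for dg categories of perfect complexes with support. (A small aside: since $Z\subset X$ is a smooth pair, the deformation space $M$ is in fact smooth, so that particular worry is unfounded --- but this does not help with the main issue.)
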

\begin{remark}[Generalizations]
Theorem \ref{thm:main} admits the following generalizations:
\begin{itemize}
\item[(G1)] We may replace the schemes $X$, $Z$, and $U$ by algebraic spaces; consult \S\ref{sub:G1}.
\item[(G2)] Given a dg category $\cA$, we may replace the dg
  categories $\perf_\dg(Z)$, $\perf_\dg(X)$, $\perf_\dg(U)$, by
  their tensor product with $\cA$.  In the case where
  $\cA=\perf_\dg(Y)$, with $Y$ a quasi-compact quasi-separated $k$-scheme, this corresponds to
 replacing the schemes $X$, $Z$, $U$ by their product
  with $Y$ over $k$;~consult~Lemma~ \ref{lem:product}.
\end{itemize}
\end{remark}
Let $\perf_\dg(X)_Z\subset \perf_\dg(X)$ be the full dg subcategory of
those perfect complexes of $\cO_X$-modules that are supported on
$Z$.~The~bulk~of~the~proof~of~Theorem~\ref{thm:main} consists in
showing that the morphism $E(i_\ast)\colon E(Z) \to
E(\perf_\dg(X)_Z)$ is invertible; see Theorem
\ref{thm:devissage}. This result, which is of independent interest,
should be considered as a new ``d\'evissage'' theorem. Its proof is
based on the description of the dg category $\perf_\dg(X)_Z$ in terms
of a formal dg $k$-algebra (when $X$ is affine) and on a
Zariski\footnote{When $X$ is an algebraic space $\cX$ we use instead a
  Nisnevich descent argument.} descent argument.

Let us now illustrate the general Theorem \ref{thm:main} in some particular cases.
\begin{example}[Fundamental theorem]\label{ex:fundamental}
When $X$ is the affine line $\mathrm{Spec}(k[t])$, $Z$ is the closed point $t=0$, and $U$ is the punctured affine line $\mathrm{Spec}(k[t,t^{-1}])$, the general Gysin triangle \eqref{eq:triangle} reduces to the following distinguished triangle
\begin{equation}\label{eq:triangle-fund}
E(k) \stackrel{E(i_\ast)}{\too} E(k) \stackrel{E(j^\ast)}{\too} E(k[t,t^{-1}]) \stackrel{\partial}{\too} \Sigma E(k)\,.
\end{equation} 
In this case we have $E(i_\ast)=0$; see
\cite[Lem.~4.2]{Fundamental}. Consequently, \eqref{eq:triangle-fund}
gives rise to an isomorphism $E(k[t,t^{-1}])\simeq E(k) \oplus \Sigma
E(k)$. The generalization (G2) yields an isomorphism $
E(\cA[t,t^{-1}])\simeq E(\cA)\oplus\Sigma
E(\cA)$ for every dg category $\cA$.
By taking $E=KH$,
resp. $E=HP$, we hence recover the fundamental theorems in homotopy
$K$-theory, resp. in periodic cyclic homology, established by Weibel
in \cite[Thms.~1.2 and 6.11]{Weibel}, resp. by Kassel in
\cite[Cor.~3.12]{Kassel}; consult \cite{Fundamental} for further
details.
\end{example}
\begin{example}[Quillen's localization theorem]
Homotopy $K$-theory agrees with Quillen's algebraic $K$-theory on smooth schemes. Therefore, when $E=KH$ the general Gysin triangle \eqref{eq:triangle} reduces to the localization theorem
\begin{equation}\label{eq:localization}
K(Z) \stackrel{K(i_\ast)}\too K(X) \stackrel{K(j^\ast)}{\too} K(U) \stackrel{\partial}{\too} \Sigma K(Z)
\end{equation}
established by Quillen in \cite[Chap.~7 \S3]{Quillen}. Quillen's
proof is based on the
d\'evissage theorem for abelian categories and on the equivalence
between $K$-theory and $G$-theory for smooth schemes. As mentioned
above, our proof is different! Moreover, following the generalization (G1), it applies also to algebraic spaces.
\end{example}
\begin{example}[Six-term exact sequence]
The maps $i\colon Z \hookrightarrow X$ and $j\colon U \hookrightarrow X$ give rise to homomorphisms on de Rham cohomology $H^n_{dR}(i_\ast)\colon H^n_{dR}(Z) \to H^{n+2c}_{dR}(X) $ and $H^n_{dR}(j^\ast)\colon H^n_{dR}(X) \to H^n_{dR}(U)$ where $c:=\mathrm{codim}(i)$.
Therefore, when $E=HP$ the long exact sequence associated to the general Gysin triangle \eqref{eq:triangle} reduces, via the identification \eqref{eq:deRham}, to the following six-term exact sequence:
$$
\xymatrix{
\bigoplus_{n\,\mathrm{even}}H^n_{dR}(Z) \ar[rr]^-{\bigoplus_nH^n_{dR}(i_\ast)} && \bigoplus_{n\,\mathrm{even}}H^n_{dR}(X) \ar[rr]^-{\bigoplus_nH^n_{dR}(j^\ast)} && \bigoplus_{n\,\,\mathrm{even}}H^n_{dR}(U) \ar[d]^-\partial \\
\bigoplus_{n\,\mathrm{odd}}H^n_{dR}(U) \ar[u]^-\partial && \bigoplus_{n\,\mathrm{odd}}H^n_{dR}(X) \ar[ll]^-{\bigoplus_n H^n_{dR}(j^\ast)} && \bigoplus_{n\,\,\mathrm{odd}}H^n_{dR}(Z) \ar[ll]^-{\bigoplus_nH^n_{dR}(i_\ast)} \,.  
}
$$ 
One may check that this sequence is the ``$2$-periodization'' of the
Gysin long exact sequence on de
Rham~cohomology~constructed~by~Hartshorne~in~\cite[Chap.~II~\S3]{Hartshorne}.  
\end{example}
\begin{example}[Noncommutative motivic Gysin triangle]
When $E=\mathrm{U}$ the general Gysin triangle \eqref{eq:triangle} reduces to the {\em noncommutative motivic Gysin triangle}:
\begin{equation}\label{eq:Gysin-mot1}
\mathrm{U}(Z) \stackrel{\mathrm{U}(i_\ast)}{\too} \mathrm{U}(X) \stackrel{\mathrm{U}(j^\ast)}{\too} \mathrm{U}(U) \stackrel{\partial}{\too} \Sigma \mathrm{U}(Z)\,.
\end{equation}
Consult Remarks \ref{rk:motivic1} and \ref{rk:motivic2} for the relation between \eqref{eq:Gysin-mot1} and the motivic Gysin triangle(s) constructed by (Morel-)Voevodsky.
\end{example}
We conclude this section with the following remark:
\begin{remark}
  Theorem \ref{thm:main} is {\em false} if we assume (C1) but {\em not} (C2). For example, cyclic homology gives rise to a localizing invariant
  $HC\colon \dgcat(k) \to \cD(k)$ which is {\em not} $\bbA^1$-homotopy invariant; see
  \cite[\S5.3]{ICM-Keller}\cite[\S1.5]{Exact}. Following Kassel
  \cite[\S3.4]{Kassel}, we have the following computation
\begin{equation*}
HC_n(k[t,t^{-1}]) \simeq  \left\{
  \begin{array}{lr}
    HC_n(k)\oplus HC_{n-1}(k) \oplus k \oplus I & n=0 \\
    HC_n(k)\oplus HC_{n-1}(k) \oplus I  & n\neq 0\,,
  \end{array}
\right.
\end{equation*}
where $I$ stands for the augmentation ideal of $k[t,t^{-1}]$. Therefore, we conclude from Example \ref{ex:fundamental} that Theorem \ref{thm:main} is false when $E=HC$.
\end{remark}
\section{Applications}
\subsection{Reduction to smooth projective schemes}
\begin{theorem}\label{thm:main2}
Let $k$ be a perfect field of characteristic $p\ge 0$ and $E\colon \dgcat(k) \to \cT$ a functor which satisfies conditions (C1)-(C2). Let us write $\cT^{sp}$ for the smallest triangulated subcategory of $\cT$ containing the objects $E(Y)$, with $Y$ a smooth projective $k$-scheme, and $\overline{\cT^{sp}}$ for the thick closure of $\cT^{sp}$ inside $\cT$. Given a smooth $k$-scheme $X$, the following holds:
\begin{itemize}
\item[(i)] When $p=0$, the object $E(X)$ belongs to $\cT^{sp}$;
\item[(ii)] When $p>0$ and $\cT$ is $\bbZ[1/p]$-linear, the object $E(X)$ belongs to $\overline{\cT^{sp}}$.
\end{itemize}
\end{theorem}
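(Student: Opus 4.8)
The plan is to proceed by induction on $d=\dim X$, having first reduced to the case where $X$ is integral: a smooth $k$-scheme is a finite disjoint union of its connected components, each of which is smooth and integral, and since a decomposition $\perf_\dg(X_1\sqcup X_2)\simeq\perf_\dg(X_1)\times\perf_\dg(X_2)$ gives a (split) short exact sequence of dg categories, condition (C1) shows that $E$ carries finite disjoint unions of schemes to finite direct sums. The base case $d=0$ is immediate, since an integral smooth $k$-scheme of dimension $0$ is the spectrum of a finite separable field extension of $k$, hence is itself smooth \emph{and} projective, so $E(X)\in\cT^{sp}$ by definition.

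For the inductive step in characteristic zero I would invoke Hironaka's resolution of singularities to embed $X$ as the complement of a strict normal crossings divisor $D=D_1\cup\cdots\cup D_r$ in a smooth projective compactification $\bar X$. Writing $\bar X^{(0)}=\bar X$ and $\bar X^{(i)}=\bar X\setminus(D_1\cup\cdots\cup D_i)$, so that $\bar X^{(r)}=X$, the locus $W_i:=D_i\cap\bar X^{(i-1)}$ is open in the smooth projective variety $D_i$, hence smooth, and it is closed in the smooth scheme $\bar X^{(i-1)}$, with $\dim W_i\le d-1$. The Gysin triangle of Theorem~\ref{thm:main} applied to $W_i\hookrightarrow\bar X^{(i-1)}$, with open complement $\bar X^{(i)}$, reads
\[
E(W_i)\too E(\bar X^{(i-1)})\too E(\bar X^{(i)})\too\Sigma E(W_i)\,.
\]
By the induction hypothesis $E(W_i)\in\cT^{sp}$, while $E(\bar X^{(0)})=E(\bar X)\in\cT^{sp}$ by definition; since a triangulated subcategory contains the third vertex of any distinguished triangle two of whose vertices lie in it, an induction on $i$ gives $E(X)=E(\bar X^{(r)})\in\cT^{sp}$. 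This proves (i).

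For (ii) I would run the same dimension induction but, resolution of singularities being unavailable, replace Hironaka by de Jong's theorem on alterations in the refined form due to Gabber (which controls the degree away from a prescribed prime): after passing to a projective closure one obtains a proper generically finite surjection $f\colon X'\to\bar X$ with $X'$ smooth projective. The failure of $f$ to be birational is absorbed by a transfer argument: over the dense open where $f$ is finite flat, the push-forward and pull-back dg functors together with the projection formula $f_\ast f^\ast(-)\simeq(-)\otimes f_\ast\cO_{X'}$ and the trace $f_\ast\cO_{X'}\to\cO$ give $E(f_\ast)\circ E(f^\ast)=\deg(f)\cdot\id$, so that, \emph{once $\deg(f)$ is inverted}, $E$ of the base is a direct summand of $E$ of the smooth projective source; this inversion is harmless because $\cT$ is $\bbZ[1/p]$-linear and, by Gabber, the degrees occurring may be arranged to be units there (one argues one prime $\ell\neq p$ at a time). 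The lower-dimensional loci that appear — the boundary of the compactification and the locus where $f$ fails to be finite flat — are then disposed of by the Gysin triangle as in (i). It is precisely the splitting-off of direct summands in the transfer step that forces the passage from $\cT^{sp}$ to its idempotent (thick) closure $\overline{\cT^{sp}}$.

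I expect the characteristic-$p$ case to be the main obstacle, and concretely the difficulty is this: an alteration is not birational, and the closed loci of dimension $\le d-1$ that it produces need not be smooth, whereas the Gysin triangle of Theorem~\ref{thm:main} demands smoothness. Reconciling the two requires organizing the dévissage so that it passes through a cdh-type excision statement for $E$, which for a localizing $\bbA^1$-invariant follows from Nisnevich descent — which localizing invariants satisfy, via the Thomason–Trobaugh excision isomorphism for perfect complexes supported on a closed subscheme — together with the abstract blow-up excision encoded in Theorem~\ref{thm:main}; combined with Gabber's control of alteration degrees, this is exactly what makes the $\bbZ[1/p]$-linearity hypothesis and the idempotent completion $\overline{\cT^{sp}}$ do their work. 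One must also check that the finitely many degrees arising throughout the induction are all invertible in the given $\bbZ[1/p]$-linear structure. By contrast, part (i) is essentially formal once Theorem~\ref{thm:main} is available.
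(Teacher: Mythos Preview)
Your argument for item (i) is correct and matches the paper's approach: induction on dimension, resolution of singularities to compactify with a strict normal crossings boundary, and iterated Gysin triangles to peel off the divisor components. The paper packages the iterated-Gysin step as a separate birationality proposition (if $X$ and $Y$ are birational smooth connected schemes and all lower-dimensional smooth schemes already lie in $\cU$, then $E(X)\in\cU\iff E(Y)\in\cU$), proved by stratifying the closed complement into smooth pieces, but the content is the same.

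For item (ii) there are two genuine gaps.

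First, and most seriously, the \emph{globalization} step is not there. You write that ``the degrees occurring may be arranged to be units'' in $\bbZ[1/p]$ by arguing ``one prime $\ell\neq p$ at a time'', but a single Gabber alteration for a fixed $\ell\neq p$ only produces degree coprime to $\ell$, which need not be a power of $p$. Working prime by prime would at best give $E(X)\in\overline{\cT^{sp}}$ after localizing at each $\ell\neq p$ separately, and it is precisely the passage from that to the single $\bbZ[1/p]$-linear statement that requires a new idea. The paper's trick is this: pick one alteration of degree $d_0$ (coprime to some $\ell_0\neq p$); let $\ell_1,\dots,\ell_t$ be the prime factors of $d_0$ other than $p$; for each $\ell_i$ take a further alteration of degree $d_i$ coprime to $\ell_i$; after shrinking to a common dense open $V$, one checks that $e:=\gcd(d_0,\dots,d_t)$ is a power of $p$, and a B\'ezout combination $\sum a_id_i=e$ exhibits $e\cdot\id_{E(V)}$ as factoring through $\bigoplus_iE(V'_i)$. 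Since $e$ is invertible in $\cT$, this makes $E(V)$ a retract of an object of $\overline{\cT^{sp}}$, and the birationality proposition finishes. This finite gcd argument is the heart of the matter and is missing from your sketch.

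Second, your proposed route through ``cdh-type excision'' and ``abstract blow-up excision encoded in Theorem~\ref{thm:main}'' is misdirected: Theorem~\ref{thm:main} requires the closed subscheme $Z$ to be \emph{smooth}, so it does not give abstract blow-up squares. The paper never needs cdh excision. The non-smooth closed complements that arise (the boundary of a compactification, the locus where the alteration is not finite \'etale) are handled instead by stratifying them into smooth locally closed pieces --- this is exactly where the hypothesis that $k$ is perfect enters --- and then applying the Gysin triangle to each smooth stratum. This is the content of the paper's birationality proposition and is the correct replacement for the cdh input you anticipated.
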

\begin{remark}
The proof of item (ii) makes use of three ingredients\footnote{The proof of item (i) makes use of ingredient (a) and of resolution of singularities.}: (a) the Gysin triangles provided by Theorem \ref{thm:main}; (b) Gabber's refined version of de Jong's theory of alterations; (c) a ``globalization'' argument which allows us to pass from $\bbZ_{(l)}$-linearity for all $l\neq p$ to $\bbZ[1/p$]-linearity. Making use of ingredients (a)-(b), and of different ``globalization'' arguments, Bondarko \cite[Thm.~2.2.1]{Bondarko} and Kelly \cite[Prop.~5.5.3]{Kelly} established an analogue of item (ii) in the particular case where $\cT$ is the Voevodsky's triangulated category of (effective) geometric motives. The ``globalization'' argument of Bondarko \cite[Rk.~2.2.2]{Bondarko} (resp. Kelly \cite[Appendix A.2]{Kelly}) assumes the existence of a weight structure (resp. that the triangulated category $\cT$ is compactly generated) and that the objects associated to smooth $k$-schemes are compact. Our ``globalization'' argument avoids all these assumptions\footnote{These assumptions are not known to hold in the case of the triangulated category of noncommutative motives $\Mot(k)$. The fact that the noncommutative motive $\mathrm{U}(X)$ of a smooth $k$-scheme $X$ is a compact object of $\Mot(k)$ is now a consequence of Theorem \ref{thm:main2}.}! In particular, it yields an alternative proof of the results of Bondarko and Kelly.
\end{remark}
\begin{corollary}\label{cor:main2} 
Let $E:\dgcat(k)\r \cT$ be a functor as in Theorem \ref{thm:main2}. Assume furthermore that $\cT$ is well generated (see \cite[Def.~1.15]{Neeman1}), symmetric monoidal\footnote{We assume throughout the article that a symmetric monoidal structure on a triangulated category is
  exact in both variables.}, and that the tensor product $-\otimes-$ preserves  arbitrary direct sums in both variables. Under these assumptions, if the functor~$E$ is moreover symmetric monoidal, then the objects $E(X)$, with $X$ a smooth $k$-scheme, are strongly dualizable.
\end{corollary}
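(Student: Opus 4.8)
The plan is to deduce strong dualizability of the objects $E(X)$, for $X$ a smooth $k$-scheme, from Theorem \ref{thm:main2} together with two standard closure facts about strongly dualizable objects in a symmetric monoidal triangulated category. First I would recall that in any symmetric monoidal triangulated category in which the tensor product is exact in both variables, the full subcategory of strongly dualizable objects is triangulated: it is closed under $\Sigma^{\pm 1}$, and if two of the three vertices of a distinguished triangle are strongly dualizable then so is the third (one checks this by constructing the coevaluation/evaluation maps on the cone via the triangulated structure, using that $-\otimes Y$ and $\uHom(Y,-)$ are compatible exact functors when $Y$ is dualizable). Second, I would recall that the strongly dualizable objects form a thick subcategory — that is, they are moreover closed under direct summands — since a retract of a strongly dualizable object is strongly dualizable (transport the duality data along the idempotent). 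I would cite the standard references (e.g.\ \cite{book} or the symmetric-monoidal-triangulated-category literature) for these two facts rather than reproving them.

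With these preliminaries in hand, the argument is short. Since $E$ is assumed symmetric monoidal, $E(k)=E(\perf_\dg(k))$ is the $\otimes$-unit of $\cT$, which is always strongly dualizable. For $Y$ a smooth projective $k$-scheme, $\perf_\dg(Y)$ is a smooth and proper dg category, hence a strongly dualizable (indeed, self-dual up to a twist) object of $\Hmo(k)$ with respect to the tensor product; applying the symmetric monoidal functor $E$ and using that symmetric monoidal functors preserve strongly dualizable objects, we conclude that each $E(Y)$ is strongly dualizable in $\cT$. Therefore the subcategory $\cT^{\mathrm{sd}}\subseteq \cT$ of strongly dualizable objects contains all the generators $E(Y)$ of $\cT^{sp}$; being triangulated, it contains $\cT^{sp}$, and being thick, it contains $\overline{\cT^{sp}}$. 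By Theorem \ref{thm:main2}(i) (when $p=0$) or \ref{thm:main2}(ii) (when $p>0$ and $\cT$ is $\bbZ[1/p]$-linear), the object $E(X)$ lies in $\cT^{sp}\subseteq \overline{\cT^{sp}}\subseteq \cT^{\mathrm{sd}}$, which is exactly the claim.

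The hypotheses that $\cT$ be well generated and that $-\otimes-$ preserve arbitrary direct sums are not needed for this bare deduction; they are presumably included because, in the setting of Theorem \ref{thm:main2}(ii), one wants the conclusion for \emph{all} smooth schemes $X$ simultaneously and the $\bbZ[1/p]$-linear category $\cT$ arising from the ``globalization'' argument is built as a localization of a well generated category, so these are the natural ambient hypotheses; alternatively they guarantee that $\uHom$ exists and is well behaved, which makes ``strongly dualizable'' the correct notion. The only genuinely nontrivial input is the smoothness-and-properness of $\perf_\dg(Y)$ for $Y$ smooth projective and the resulting dualizability in $\Hmo(k)$; the main obstacle, if any, is simply to make sure the reference for ``symmetric monoidal functors of triangulated categories preserve strongly dualizable objects'' is applied in the exact generality (exactness of $\otimes$ in both variables, which is part of our standing assumptions) we are working in. No new ideas beyond Theorem \ref{thm:main2} are required.
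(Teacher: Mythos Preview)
Your proposal is correct and follows essentially the same route as the paper: use that $\perf_\dg(Y)$ is smooth and proper (hence dualizable in $\Hmo(k)$) for $Y$ smooth projective, push this through the symmetric monoidal functor $E$, and then invoke Theorem~\ref{thm:main2} together with closure of strongly dualizable objects under triangles and retracts. The one point where the paper differs from your account is the role of the well-generated and direct-sum hypotheses: rather than treating them as incidental, the paper uses them explicitly (via \cite[Thm.~8.4.4]{Neeman1}) to show that $\cT$ is \emph{closed} symmetric monoidal, and then appeals to the closure of dualizable objects under triangles and summands in that closed setting---so your ``alternatively they guarantee that $\uHom$ exists'' is in fact the paper's actual reason, not an afterthought.
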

\begin{proof}
Given an object $b \in \cT$, the functor $-\otimes b \colon \cT \to
  \cT$ is exact and preserves arbitrary direct
  sums. Therefore, thanks to \cite[Thm.~8.4.4]{Neeman1}, it admits a
  right adjoint $\uHom(b,-)$ which by definition is the internal-Hom
  functor. This shows that the symmetric monoidal structure of $\cT$
   is closed.

  As proved in \cite[Thm.~1.43]{book}, the strongly dualizable objects 
  of the category $\Hmo(k)$ are the smooth proper dg
  categories; see \S\ref{sub:dg}. Since by assumption the functor~$E$
  is symmetric monoidal, we  conclude that the objects
  $E(X)$, with $X$ a smooth projective $k$-scheme,
  are strongly dualizable. The result follows now from Theorem \ref{thm:main2} and from the well-known
  fact that the strongly dualizable objects of a closed symmetric
  monoidal triangulated category are stable under distinguished
  triangles and direct summands.
\end{proof}
\subsection{Additive invariants of relative cellular spaces}\label{sub:cellular}
\subsubsection{Additive invariants}
\label{sec:additive}
Our next application is for so-called \emph{additive invariants} which are a weaker
type of invariant than the kind  we have considered up to now. 
Every localizing invariant is an additive invariant but the converse is not true.\footnote{Quillen's
  algebraic $K$-theory as well as Karoubi-Villamayor's $K$-theory are
  examples of additive invariants which are not localizing; consult
  \cite{book} for details.} Examples of additive invariants
include algebraic $K$-theory and all its variants, cyclic homology and all its variants,
topological~Hochschild~homology,~etc.

\medskip

Let $\mathrm{I}$ be the dg category with objects $\{1,2\}$ and
complexes of morphisms
$\mathrm{I}(1,1)=\mathrm{I}(2,1)=\mathrm{I}(1,2)=k$ and
$\mathrm{I}(2,1)=0$. Given a dg category $\cA$, let
$T(\cA):=\cA\otimes \mathrm{I}$. We have two inclusion dg functors
$\iota_1, \iota_2\colon \cA \to T(\cA)$. A functor $F\colon \dgcat(k)
\to \mathrm{A}$, with values in an additive category, is called an
{\em additive invariant} if it it inverts the Morita equivalences and
sends the dg categories $T(\cA)$ to direct sums
$$ [F(\iota_1)\,\, F(\iota_2)]\colon F(\cA) \oplus F(\cA) \stackrel{\simeq}{\too} F(T(\cA))\,.$$

As explained in \cite[\S13]{Higher}, the notion of additive invariant
can be equivalently formulated in terms of {\em split} short exact
sequences of dg categories. Therefore, every localizing invariant is
in particular an additive invariant. 

\begin{remark} 
As proved in \cite{Additive} (consult also \S\ref{sec:universal}), there exists a
\emph{universal additive invariant}
$\mathrm{U}_{\mathrm{add}}:\dgcat(k)\r \Hmo_0(k)$ with values in a suitable
additive category $\Hmo_0(k)$. This implies that an additive invariant can be alternatively characterized as a
functor $F\colon \dgcat(k)\r \mathrm{A}$ which factors through
$\mathrm{U}_{\mathrm{add}}$.
\end{remark}
\subsubsection{Relative cellular spaces}
A flat map of $k$-schemes $f\colon X \to Y$ is called an {\em affine fibration} of relative dimension $d$ if for every point $y\in Y$ there exists a Zariski open neighborhood $y \in V$ such that $X_V:=f^{-1}(V) \simeq Y \times \bbA^d$ with $f_V\colon X_V \to Y$ isomorphic to the projection onto the first factor. Following Karpenko \cite[Def.~6.1]{Karpenko}, a smooth projective $k$-scheme $X$ is called a {\em relative cellular space} if there exists a filtration by closed subschemes
\begin{equation}\label{eq:filtration}
\varnothing = X_{-1} \hookrightarrow X_0 \hookrightarrow \cdots \hookrightarrow X_i \hookrightarrow \cdots \hookrightarrow X_{n-1} \hookrightarrow X_n=X
\end{equation}
and affine fibrations $p_i\colon X_i \backslash X_{i-1} \to Y_i, 0 \leq i \leq n$, of relative dimension $d_i$ with $Y_i$ a smooth projective $k$-scheme. The smooth schemes $X_i \backslash X_{i-1}$ are called the {\em cells} and the smooth projective schemes $Y_i$ the {\em bases of the cells}.
\begin{example}[$\bbG_m$-schemes]
\label{example:bb}
The celebrated Bialynicki-Birula decomposition \cite{BB} provides a relative cellular space structure on smooth projective $k$-schemes equipped with a $\bbG_m$-action
in which the bases of the cells are given by the connected components of the fixed point locus; consult also \cite[Thm.~3.1]{Brosnan}\cite{H,I}.
This class of relative cellular spaces includes  the isotropic flag varieties considered originally by Karpenko \cite{Karpenko} as well as the isotropic homogeneous spaces  considered later by Chernousov-Gille-Merkurjev ~\cite{CGM}. 
\end{example}
Our main result concerning relative cellular spaces is the following:
\begin{theorem}\label{thm:cellular2} 
Let $X$ be a relative cellular space. For every additive invariant~$F$, we have an induced isomorphism 
$
F(X)\simeq \bigoplus_{i=0}^n F(Y_i)
$.
\end{theorem}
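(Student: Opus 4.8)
The plan is to induct on the length $n$ of the filtration \eqref{eq:filtration}, peeling off one cell at a time and using the Gysin triangle of Theorem \ref{thm:main} together with the defining property of an additive invariant.

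First I would reduce the statement about additive invariants to a statement about localizing invariants, or rather to the universal localizing invariant, by the following trick: although $F$ is only additive, the splitting we ultimately produce can be obtained from the Gysin triangle by exhibiting a retraction. Concretely, fix $i$ and set $X' := X_i$, $Z := X_{i-1}$ (a smooth closed subscheme of the smooth projective scheme $X'$, smooth because by induction all strata are smooth), and $U := X_i \backslash X_{i-1}$, which is the open complement. Theorem \ref{thm:main} gives, for every functor $E$ satisfying (C1)--(C2), a Gysin triangle $E(Z) \to E(X') \to E(U) \stackrel{\partial}{\to} \Sigma E(Z)$. The difficulty is that an additive invariant is not of this form. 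So instead I would invoke the universal additive invariant $\mathrm{U}_{\mathrm{add}}\colon \dgcat(k) \to \Hmo_0(k)$: it suffices to prove the isomorphism for $F = \mathrm{U}_{\mathrm{add}}$. But $\Hmo_0(k)$ is only additive, not triangulated, so one cannot apply Theorem \ref{thm:main} directly to it. The way around this is to pass through the universal localizing invariant $\mathrm{U}_{\mathrm{loc}}$ (or the motivic category $\Mot(k)$ of Example \ref{ex:Mot}, which is triangulated and receives a functor satisfying (C1)--(C2)), prove there that $\mathrm{U}_{\mathrm{loc}}(U) \simeq \mathrm{U}_{\mathrm{loc}}(X')/\mathrm{U}_{\mathrm{loc}}(Z)$ is in fact a \emph{split} quotient because $U$ is an affine fibration over $Y_i$, and then transport the splitting down to $\Hmo_0(k)$ via the comparison functor $\Mot(k) \to \Hmo_0(k)$ (which exists because $\mathrm{U}$ inverts Morita equivalences and sends $T(\cA)$ to direct sums).

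Here is the mechanism for the splitting. The cell $U = X_i \backslash X_{i-1}$ is an affine fibration $p_i\colon U \to Y_i$ of relative dimension $d_i$. By \textbf{homotopy invariance (C2)} applied Zariski-locally (using a Zariski descent / Mayer--Vietoris argument exactly as in the proof of Theorem \ref{thm:main}, and here is where I would cite the machinery already set up in this paper), the pull-back dg functor $p_i^\ast\colon \perf_\dg(Y_i) \to \perf_\dg(U)$ induces an isomorphism $E(p_i^\ast)\colon E(Y_i) \isoto E(U)$ for every $E$ satisfying (C1)--(C2); this is the standard fact that localizing $\bbA^1$-invariants are invariant under affine (Zariski-locally trivial) fibrations. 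Thus the Gysin triangle becomes $E(X_{i-1}) \to E(X_i) \to E(Y_i) \stackrel{\partial}{\to} \Sigma E(X_{i-1})$. To see that $\partial = 0$ — equivalently that the triangle splits — I would observe that $X_i$ is smooth projective and $Y_i$ is smooth projective, and argue that the composite $E(Y_i) \xrightarrow{E(p_i^\ast)^{-1}} E(U) \xrightarrow{\partial} \Sigma E(X_{i-1})$ vanishes; one clean way is to note that the open immersion $j\colon U \hookrightarrow X_i$ together with a section-type argument, or more robustly to use that in the universal (motivic) setting the class of $X_i$ decomposes by the Bialynicki--Birula / cellular formalism and the connecting map is forced to be zero because $\mathrm{U}(p_i^\ast)$ lifts along $\mathrm{U}(j^\ast)$. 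Granting $\partial = 0$, the triangle splits (non-canonically) as $E(X_i) \simeq E(X_{i-1}) \oplus E(Y_i)$, and feeding this into the induction hypothesis $E(X_{i-1}) \simeq \bigoplus_{j=0}^{i-1} E(Y_j)$ yields $E(X_i) \simeq \bigoplus_{j=0}^{i} E(Y_j)$. Taking $i = n$ and then specializing to $E = \mathrm{U}$ (or directly to $\mathrm{U}_{\mathrm{add}}$ after the transport described above) gives $F(X) \simeq \bigoplus_{i=0}^n F(Y_i)$ for every additive invariant $F$.

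\textbf{Main obstacle.} The crux is the vanishing of the connecting morphism $\partial$, i.e.\ showing the Gysin triangle for the pair $(X_i, X_{i-1})$ actually splits. Homotopy invariance gives $E(U) \simeq E(Y_i)$ for free, but that alone does not split the triangle; one genuinely needs the geometry of the cellular filtration — that the affine fibration structure on $U$ is compatible, via some choice of relative coordinates, with the ambient $X_i$ in a way that produces a one-sided inverse to $E(j^\ast)\colon E(X_i) \to E(U)$. I expect the honest argument to require either (i) building such a retraction by hand using the closed-open decomposition and the projective (hence proper) structure, or (ii) reducing to the motivic category $\Mot(k)$ where one can cite/reprove a cellular-decomposition statement and then invoke naturality of $\partial$ under the universal functor $\mathrm{U}$. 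Option (ii) is cleaner and is the route I would pursue, since it also automatically handles the passage from the triangulated world down to the merely additive $\Hmo_0(k)$ in which the final statement is phrased.
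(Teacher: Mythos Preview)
Your overall strategy matches the paper's, but the induction is set up incorrectly and this causes a genuine gap. You propose to apply the Gysin triangle to the pair $(X_i, X_{i-1})$, asserting that $X_{i-1}$ is ``smooth because by induction all strata are smooth''. But the definition of a relative cellular space only says that the \emph{cells} $X_i\backslash X_{i-1}$ are smooth (as affine fibrations over the smooth projective $Y_i$); the closed subschemes $X_i$ for $i<n$ need not be smooth at all. So Theorem~\ref{thm:main} does not apply to $(X_i,X_{i-1})$, and the ascending induction breaks at the first step. The paper fixes this by working instead inside the smooth ambient scheme $X$: one applies the Gysin triangle to the smooth closed immersion $X_i\backslash X_{i-1}\hookrightarrow X\backslash X_{i-1}$ (both schemes are smooth, being open in $X$ or a cell), with open complement $X\backslash X_i$, and runs a \emph{descending} induction showing $\mathrm{U}(X\backslash X_j)\simeq\bigoplus_{i=j+1}^n \mathrm{U}(Y_i)$.

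Two further points. First, the vanishing of $\partial$ that you flag as the main obstacle is handled in the paper by a clean computation, not geometry: once the induction hypothesis rewrites the triangle as $\mathrm{U}(Y_j)\to\mathrm{U}(X\backslash X_{j-1})\to\bigoplus_{i>j}\mathrm{U}(Y_i)\stackrel{\partial}{\to}\Sigma\,\mathrm{U}(Y_j)$, Proposition~\ref{prop:K-theory1} gives $\Hom_{\Mot(k)}(\mathrm{U}(Y_i),\Sigma\,\mathrm{U}(Y_j))\simeq K_{-1}(Y_i\times Y_j)=0$, since negative $K$-theory of smooth schemes vanishes. This is exactly why one needs all the $Y_i$ to be smooth \emph{projective}. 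Second, there is no comparison functor $\Mot(k)\to\Hmo_0(k)$; the functor $\overline{\mathrm{U}}$ goes the other way. The passage to $\Hmo_0(k)$ uses Lemma~\ref{lem:invertible}: $\overline{\mathrm{U}}$ is fully faithful on the objects $\mathrm{U}_{\mathrm{add}}(Y)$ with $Y$ smooth proper, so the isomorphism $\mathrm{U}(X)\simeq\bigoplus_i\mathrm{U}(Y_i)$, which lives entirely among such objects, lifts to $\Hmo_0(k)$ and hence holds for every additive invariant.
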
 
\begin{remark}[Strategy of the proof]
In order to prove Theorem \ref{thm:cellular2} we consider first the special case $F=\mathrm{U}$, \ie we establish first an induced isomorphism
\begin{equation}\label{eq:cellular2}
\mathrm{U}(X)\simeq \bigoplus_{i=0}^n \mathrm{U}(Y_i)
\end{equation}
in the category of noncommutative motives $\Mot(k)$.
This decomposition is analogous to a similar result for Chow motives  proved by Karpenko \cite{Karpenko} using refined
properties of Chow and $K$-cohomology
groups. 
The proof of
\eqref{eq:cellular2} uses in an essential way the fact that $\mathrm{U}$
satisfies conditions (C1)-(C2) (note that we do \emph{not} require this for $F$ in Theorem \ref{thm:cellular2}!). It is based on the
invariance of noncommutative motives under affine fibrations and on
the observation that the Gysin triangles associated to the
filtration~\eqref{eq:filtration}~are actually split! We cannot immediately obtain Theorem \ref{thm:cellular2} from \eqref{eq:cellular2} since $F$ will in general not factor
through $\mathrm{U}$. However, using the fact that all the schemes in the motivic decomposition \eqref{eq:cellular2} are smooth projective, we prove that \eqref{eq:cellular2}
implies a similar decomposition
\[
\mathrm{U}_{\mathrm{add}}(X)\simeq \bigoplus_{i=0}^n \mathrm{U}_{\mathrm{add}}(Y_i)
\]
in the additive category $\Hmo_0(k)$. To finish the proof we use the fact that $F$, being additive, factors (uniquely) through $\mathrm{U}_{\mathrm{add}}$.
\end{remark}
\begin{example}[Kn\"orrer periodicity] 
  The following application of the Bialynicki-Birula decomposition was
  inspired by the work of Brosnan \cite{Brosnan}. Let $q=fg+q'$ where $f$, $g$, and $q'$, are
  forms of degree $a>0$, $b>0$, and $a+b$, in disjoint sets
  of variables $(x_i)_{i=1,\ldots,m}$, $(y_j)_{j=1,\ldots,n}$,
  and $(z_k)_{k=1,\ldots,p}$, respectively. Let us write $Q$ and $Q'$ for the projective
  hypersurfaces defined by $q$ and $q'$, respectively. Assume that $Q$ is smooth. Under these notations and assumptions, there is a $\bbG_m$-action on $Q$ given by $\lambda\cdot
  (\underline{x},\underline{y},\underline{z}):=(\lambda^b\underline{x},\lambda^{-a}\underline{y},\underline{z})$
  with fixed point locus $\bbP^{m-1}\coprod \bbP^{n-1}\coprod Q'$; note that this implies that $Q'$ is also smooth. By combining Theorem \ref{thm:cellular2} and Example \ref{example:bb} with the fact that $\mathrm{U}_{\mathrm{add}}(\bbP^n)\simeq \mathrm{U}_{\mathrm{add}}(k)^{\oplus (n+1)}$ (see \cite[\S2.4.2]{book}), we hence obtain an induced isomorphism 
\begin{equation}\label{eq:induced-iso}
F(Q) \simeq F(k)^{\oplus (m+n)} \oplus F(Q') 
\end{equation}  
for every additive invariant $F$. Intuitively speaking, isomorphism \eqref{eq:induced-iso} shows that the ``non-trivial parts'' of $F(Q)$ and $F(Q')$ are the same. Finally, recall that the preceding computation holds for all isotropic quadratic forms since it is well-known that these can be written as  $xy+q'(\underline{z})$.
%
%
\end{example}
\section{Motives versus noncommutative motives} 
\label{sec:commnoncomm}
\subsection{Motivic homotopy theory versus noncommutative mixed motives}
The reduction Theorem \ref{thm:main2} (and Corollary \ref{cor:main2}) allows us to improve the bridge between Morel-Voevodsky's motivic homotopy theory and Kontsevich's noncommutative mixed motives originally constructed in \cite{Bridge}.

Kontsevich introduced in \cite{IAS} a (rigid) symmetric monoidal triangulated category of noncommutative mixed motives $\mathrm{KMM}(k)$. As explained in \cite[\S4]{Bridge}, this category can be described as the smallest thick triangulated subcategory of $\Mot(k)$ containing the objects $\mathrm{U}(\cA)$ with $\cA$ a smooth proper dg category. In the same vein, let us write $\mathrm{KMM}(k)^\oplus$ for the smallest triangulated subcategory of $\Mot(k)$ which contains $\mathrm{KMM}(k)$ and is stable under arbitrary direct sums.

Morel and Voevodsky introduced in \cite{MV,Voevodsky-ICM} the stable $\bbA^1$-homotopy category of $(\bbP^1,\infty)$-spectra $\mathrm{SH}(k)$. By construction, this category comes equipped with a symmetric monoidal functor $\Sigma^\infty(-_+)\colon \mathrm{Sm}(k) \to \mathrm{SH}(k)$ defined on smooth $k$-schemes. Let $\mathrm{KGL} \in \mathrm{SH}(k)$ be the ring $(\bbP^1,\infty)$-spectrum representing homotopy $K$-theory (see \cite{GS,RSO}) and $\mathrm{Mod}(\mathrm{KGL})$ the homotopy category of $\mathrm{KGL}$-modules.
\begin{theorem}\label{thm:new1}
\begin{itemize}
\item[(i)] Let $k$ be a field of characteristic zero. Then, there exists a fully-faithful, symmetric monoidal, triangulated functor $\Phi$ making the following diagram commute
\begin{equation}\label{eq:diagram-1}
\xymatrix{
\mathrm{Sm}(k) \ar[d]_-{\Sigma^\infty(-_+)} \ar[rrr]^-{X \mapsto \perf_\dg(X)} \ar[drr] &&& \dgcat(k) \ar[d]^-{\mathrm{U}} \\
\mathrm{SH}(k) \ar[d]_-{-\wedge \mathrm{KGL}} && \mathrm{KMM}(k) \ar[d]_-{(-)^\vee} \ar[r] & \Mot(k) \ar[d]^-{\uHom(-,\mathrm{U}(k))} \\
\mathrm{Mod}(\mathrm{KGL}) \ar[rr]_-\Phi && \mathrm{KMM}(k)^\oplus \ar[r] & \mathrm{Mot}(k) \,,
}
\end{equation}
where $\uHom(-,-)$ stands for the internal-Hom of the category $\Mot(k)$. 
%
\item[(ii)] Let $k$ be a perfect field of positive 
    characteristic $p>0$. Then, there exists a
  $\bbZ[1/p]$-linear, fully-faithful, symmetric monoidal,
  triangulated functor $\Phi_{1/p}$ making the following diagram commute (the shorthand $1/p$ stands for $\bbZ[1/p]$):
\begin{equation}\label{eq:diagram-33}
\xymatrix{
\mathrm{Sm}(k) \ar[d]_-{\Sigma^\infty(-_+)_{1/p}} \ar[rrr]^-{X \mapsto \perf_\dg(X)} \ar[drr] &&& \dgcat(k) \ar[d]^-{\mathrm{U}(-)_{1/p}} \\
\mathrm{SH}(k)_{1/p} \ar[d]_-{-\wedge \mathrm{KGL}_{1/p}} && \mathrm{KMM}(k)_{1/p} \ar[d]_-{(-)^\vee} \ar[r] & \Mot(k)_{1/p} \ar[d]^-{\uHom(-,\mathrm{U}(k)_{1/p})} \\
\mathrm{Mod}(\mathrm{KGL}_{1/p}) \ar[rr]_-{\Phi_{1/p}} && \mathrm{KMM}(k)_{1/p}^\oplus \ar[r] & \mathrm{Mot}(k)_{1/p} \,.
}
\end{equation}
\end{itemize}
\end{theorem}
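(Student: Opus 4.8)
The plan is to take the bridge functor of \cite{Bridge}, which already fits into diagram~\eqref{eq:diagram-1} with $\Mot(k)$ in place of $\mathrm{KMM}(k)^\oplus$, and to corestrict it along the fully faithful inclusion $\mathrm{KMM}(k)^\oplus\hookrightarrow\Mot(k)$ by means of the reduction Theorem~\ref{thm:main2}. Concretely, for part~(i) I would first recall from \cite{Bridge} that there is a symmetric monoidal triangulated functor $\Psi\colon\mathrm{Mod}(\mathrm{KGL})\to\Mot(k)$ which preserves arbitrary direct sums, is fully faithful, and comes equipped with a natural isomorphism $\Psi(\Sigma^\infty(X_+)\wedge\mathrm{KGL})\simeq\uHom(\mathrm{U}(X),\mathrm{U}(k))$, natural in $X\in\mathrm{Sm}(k)$, that expresses the commutativity of the outer frame of~\eqref{eq:diagram-1} after substituting $\Mot(k)$ for $\mathrm{KMM}(k)^\oplus$. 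I would also use two standard facts: that, by the Bott periodicity of $\mathrm{KGL}$, the category $\mathrm{Mod}(\mathrm{KGL})$ coincides with the smallest localizing (that is, triangulated and closed under arbitrary direct sums) subcategory of itself containing $\{\Sigma^\infty(X_+)\wedge\mathrm{KGL}\mid X\in\mathrm{Sm}(k)\}$; and that $\mathrm{KMM}(k)$ is rigid, so that for every $M\in\mathrm{KMM}(k)$ the internal-Hom $\uHom(M,\mathrm{U}(k))$ formed in $\Mot(k)$ agrees with the dual $M^\vee$, which again belongs to $\mathrm{KMM}(k)$.

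The key step is then the observation that $\mathrm{U}(X)\in\mathrm{KMM}(k)$ for every smooth $k$-scheme $X$: applying Theorem~\ref{thm:main2}(i) to $E=\mathrm{U}$ and $\cT=\Mot(k)$, the object $\mathrm{U}(X)$ lies in $\cT^{sp}$, the smallest triangulated subcategory generated by the $\mathrm{U}(Y)=\mathrm{U}(\perf_\dg(Y))$ with $Y$ smooth projective, and since each $\perf_\dg(Y)$ is smooth proper one has $\cT^{sp}\subseteq\mathrm{KMM}(k)$. Consequently --- using Corollary~\ref{cor:main2}, or the rigidity of $\mathrm{KMM}(k)$ just recalled, to identify $\uHom(\mathrm{U}(X),\mathrm{U}(k))$ with $\mathrm{U}(X)^\vee$ --- one obtains $\Psi(\Sigma^\infty(X_+)\wedge\mathrm{KGL})\simeq\mathrm{U}(X)^\vee\in\mathrm{KMM}(k)\subseteq\mathrm{KMM}(k)^\oplus$ for all smooth $X$. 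Since $\Psi$ is triangulated and preserves direct sums, the full subcategory of $\mathrm{Mod}(\mathrm{KGL})$ consisting of the objects that $\Psi$ sends into $\mathrm{KMM}(k)^\oplus$ is localizing and contains all those generators, hence equals $\mathrm{Mod}(\mathrm{KGL})$; thus $\Psi$ factors as $\mathrm{Mod}(\mathrm{KGL})\stackrel{\Phi}{\too}\mathrm{KMM}(k)^\oplus\hookrightarrow\Mot(k)$, and $\Phi$ inherits from $\Psi$, through the fully faithful symmetric monoidal inclusion, the properties of being symmetric monoidal, triangulated and fully faithful. The three remaining commutativity assertions of~\eqref{eq:diagram-1} then follow immediately: the diagonal $\mathrm{Sm}(k)\to\mathrm{KMM}(k)$ is simply the corestriction of $X\mapsto\mathrm{U}(\perf_\dg(X))$, legitimate by the key step; the square relating $(-)^\vee\colon\mathrm{KMM}(k)\to\mathrm{KMM}(k)^\oplus$ to $\uHom(-,\mathrm{U}(k))\colon\Mot(k)\to\Mot(k)$ commutes by rigidity; and the lower part commutes because, after composing with $\mathrm{KMM}(k)^\oplus\hookrightarrow\Mot(k)$ and using that this inclusion composed with $\Phi$ is $\Psi$, it reduces to the commutativity already established for $\Psi$ in~\cite{Bridge}.

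For part~(ii) I would repeat the argument over a perfect field of characteristic $p>0$, systematically replacing $\mathrm{SH}(k)$, $\mathrm{KGL}$, $\Mot(k)$ and $\mathrm{KMM}(k)$ by their $\bbZ[1/p]$-linearizations and invoking Theorem~\ref{thm:main2}(ii) in place of~(i): since $\Mot(k)_{1/p}$ is $\bbZ[1/p]$-linear, the object $\mathrm{U}(X)_{1/p}$ now lies in the thick closure $\overline{\cT^{sp}}$, which is still contained in the (thick) subcategory $\mathrm{KMM}(k)_{1/p}$, and the remainder goes through verbatim. I expect the main obstacle to be not the motivic input --- which is exactly Theorem~\ref{thm:main2} --- but the careful transcription of the \cite{Bridge} package (full faithfulness, monoidality, preservation of direct sums, and the precise value of $\Psi$ on the generators $\Sigma^\infty(X_+)\wedge\mathrm{KGL}$), and above all the production of a $\bbZ[1/p]$-linear version of that package over a perfect field of positive characteristic, where several of the underlying comparisons between $K$-theory, $\mathrm{KGL}$-modules and noncommutative motives hold only after inverting~$p$.
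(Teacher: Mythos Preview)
Your proposal is correct and follows essentially the same route as the paper: invoke the bridge functor of \cite{Bridge} for the outer square, and use Theorem~\ref{thm:main2} and Corollary~\ref{cor:main2} (applied to $E=\mathrm{U}$) to corestrict it through $\mathrm{KMM}(k)^\oplus$ and obtain the inner squares. The one place where the paper adds something you left open is part~(ii): for the $\bbZ[1/p]$-linear outer square over a perfect field of characteristic $p>0$, the paper indicates that one refines Ayoub's argument \cite[Prop.~2.2.27-2]{Ayoub2} using Gabber's alterations (exactly parallel to the proof of Theorem~\ref{thm:main2}(ii)) and then applies \cite[Thm.~2.1(iii)]{Bridge}, which is precisely the ``production of a $\bbZ[1/p]$-linear version of that package'' you flagged as the main obstacle.
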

\begin{proof}
The outer commutative square of diagram
  \eqref{eq:diagram-1} was constructed in
  \cite[Cor.~2.5(i)]{Bridge}. The inner
  commutative squares follow from Theorem
  \ref{thm:main2}(i) and Corollary \ref{cor:main2} applied to the
  functor $E=\mathrm{U}$. Similarly to the proof of Theorem
  \ref{thm:main2}(ii) (see \S\ref{sec:proofmain2}), one can refine the
  proof of Ayoub \cite[Prop.~2.2.27-2]{Ayoub2} using Gabber's refined
  theory of alterations. Using \cite[Thm.~2.1(iii)]{Bridge},
  we hence obtain the outer commutative square of diagram
  \eqref{eq:diagram-33}. The inner commutative
  squares follow from Theorem
  \ref{thm:main2}(ii) and Corollary \ref{cor:main2} applied to the
  functor $E=\mathrm{U}$.
\end{proof}
Intuitively speaking, Theorem \ref{thm:new1} formalizes the conceptual idea that the diference between the motivic homotopy theory and the theory of noncommutative mixed motives is measured solely by the existence of a $\mathrm{KGL}$-module structure.
\begin{remark}[Morel-Voevodsky's motivic Gysin triangle]\label{rk:motivic1}
Let $X$ be a smooth scheme, $i\colon Z \hookrightarrow X$ a smooth closed subscheme with normal vector bundle $N$, and $j\colon U \hookrightarrow X$ the open complement of $Z$. Making use of 
the Nisnevich topology and of homotopy purity, Morel-Voevodsky constructed in \cite[\S3.2]{MV}\cite[\S4]{Voevodsky-ICM} a motivic Gysin triangle
\begin{equation}\label{eq:Gysin-mot2}
\Sigma^\infty(U_+) \stackrel{\Sigma^\infty(j_+)}{\too} \Sigma^\infty(X_+) \too \Sigma^\infty(\mathrm{Th}(N))\stackrel{\partial}{\too} \Sigma(\Sigma^\infty(U_+))
\end{equation}
in $\mathrm{SH}(k)$, where $\mathrm{Th}(N)$ stands for the Thom space of $N$. Since homotopy $K$-theory is an orientable and periodic cohomology theory, $\Sigma^\infty(\mathrm{Th}(N))\wedge \mathrm{KGL}$ is isomorphic to $\Sigma^\infty(Z_+)\wedge \mathrm{KGL}$. Using the commutative diagram \eqref{eq:diagram-1}, we hence conclude that the image of \eqref{eq:Gysin-mot2} under the composed functor $\Phi \circ (-\wedge \mathrm{KGL})\colon \mathrm{SH}(k) \to \mathrm{KMM}(k)^\oplus$ agrees with the dual of the noncommutative motivic Gysin triangle \eqref{eq:Gysin-mot1}. Roughly speaking, \eqref{eq:Gysin-mot1} is the dual of the $\mathrm{KGL}$-linearization of \eqref{eq:Gysin-mot2}.
\end{remark}
\subsection{Mixed motives versus noncommutative mixed motives}
The reduction Theorem \ref{thm:main2} (and Corollary \ref{cor:main2}) allows us also to improve the bridge between Voevodsky's mixed motives and noncommutative mixed motives constructed~in~\cite{Bridge}.

Voevodsky introduced in \cite[\S2]{Voevodsky} the triangulated category of geometric mixed motives $\mathrm{DM}_{\mathrm{gm}}(k)$ (over a perfect field $k$). By construction, this category comes equipped with a symmetric monoidal functor $M\colon \mathrm{Sm}(k) \to \mathrm{DM}_{\mathrm{gm}}(k)$.

Let $\mathrm{HZ} \in \mathrm{SH}(k)$ be the ring $(\bbP^1,\infty)$-spectrum representing motivic cohomology; see \cite[\S6.1]{Voevodsky-ICM}. Thanks to Bloch's work \cite{Bloch}, we have $\mathrm{KGL}_\bbQ \simeq \bigoplus_{i\in \bbZ} \mathrm{HZ}_\bbQ(i)[2i]$. Moreover, $\mathrm{DM}_{\mathrm{gm}}(k)_\bbQ$ identifies with the full triangulated subcategory of compact objects of $\mathrm{DM}(k)_\bbQ:=\Mod(\mathrm{HZ}_\bbQ)$; see \cite{RO1}. As a consequence, base-change along $\mathrm{HZ}_\bbQ \to \mathrm{KGL}_\bbQ$ gives rise to a functor $\mathrm{DM}(k)_\bbQ \to \Mod(\mathrm{KGL}_\bbQ)$. By composing it with $\Phi_\bbQ$, we hence obtain a $\bbQ$-linear, symmetric monoidal, triangulated functor 
\begin{equation}\label{eq:functor-comparison}
\mathrm{R}\colon \mathrm{DM}(k)_\bbQ\too\Mod(\mathrm{KGL}_\bbQ) \stackrel{\Phi_\bbQ}{\too} \mathrm{KMM}(k)_\bbQ^\oplus\,.
\end{equation}
\begin{theorem}\label{thm:new2}
Let $k$ be a perfect field. The functor \eqref{eq:functor-comparison} gives rise to a $\bbQ$-linear, fully-faithful, symmetric monoidal functor $\overline{\mathrm{R}}$ making the following diagram commute
\begin{equation}\label{eq:diagram-3}
\xymatrix{
\mathrm{Sm}(k) \ar[d]_-{M(-)_\bbQ} \ar[rrr]^-{X \mapsto \perf_\dg(X)} \ar[drr] &&& \dgcat(k) \ar[d]^-{\mathrm{U}(-)_\bbQ} \\
\mathrm{DM}_{\mathrm{gm}}(k)_\bbQ \ar[d]_-\pi && \mathrm{KMM}(k)_\bbQ \ar[d]_-{(-)^\vee} \ar[r] & \Mot(k)_\bbQ \ar[d]^-{\uHom(-,\mathrm{U}(k)_\bbQ)} \\
\mathrm{DM}_{\mathrm{gm}}(k)_\bbQ/_{\!\!-\otimes \bbQ(1)[2]} \ar[rr]_-{\overline{\mathrm{R}}} && \mathrm{KMM}(k)_\bbQ \ar[r] & \mathrm{Mot}(k)_\bbQ \,,
}
\end{equation}
where $\mathrm{DM}_{\mathrm{gm}}(k)_\bbQ/_{\!\!-\otimes \bbQ(1)[2]}$ stands for the orbit category of $\mathrm{DM}_{\mathrm{gm}}(k)_\bbQ$ with respect to the Tate motive $\bbQ(1)[2]$ (consult \cite[\S3.5]{Bridge} for the notion of orbit category).
\end{theorem}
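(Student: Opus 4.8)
The plan is to build the functor $\overline{\mathrm{R}}$ by combining the already-known bridge of \cite{Bridge} with the improved bridge of Theorem \ref{thm:new1}. First I would recall from \cite[\S4 and \S3.5]{Bridge} that there is a $\bbQ$-linear, fully-faithful, symmetric monoidal functor from the orbit category $\mathrm{DM}_{\mathrm{gm}}(k)_\bbQ/_{\!\!-\otimes \bbQ(1)[2]}$ into $\mathrm{KMM}(k)_\bbQ$; the point of the orbit category construction is precisely to kill the Tate twist, which becomes invisible on the noncommutative side since $\mathrm{U}(\bbP^1)\simeq \mathrm{U}(k)^{\oplus 2}$. The main new input is that the functor \eqref{eq:functor-comparison}, which is defined on all of $\mathrm{DM}(k)_\bbQ$ via base-change along $\mathrm{HZ}_\bbQ\to\mathrm{KGL}_\bbQ$ followed by $\Phi_\bbQ$, restricts to a functor $\mathrm{DM}_{\mathrm{gm}}(k)_\bbQ\to\mathrm{KMM}(k)_\bbQ$ that factors through the orbit category. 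The factorization through the orbit category is formal: base-change along $\mathrm{KGL}_\bbQ\simeq\bigoplus_i\mathrm{HZ}_\bbQ(i)[2i]$ (Bloch's theorem) sends $-\otimes\bbQ(1)[2]$ to an auto-equivalence naturally isomorphic to the identity, so $\mathrm{R}$ is invariant under the Tate twist up to coherent isomorphism, which is exactly the universal property of the orbit category.

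Next I would verify the commutativity of the diagram \eqref{eq:diagram-3}. The outer square is essentially \cite[Cor.~2.5]{Bridge} (in its rational, positive-or-zero characteristic form, using Theorem \ref{thm:new1}(ii) when $p>0$): it says that transporting a smooth scheme $X$ to $M(X)_\bbQ$ and then applying $\mathrm{R}$ agrees with $\mathrm{U}(\perf_\dg(X))_\bbQ$ followed by the duality $\uHom(-,\mathrm{U}(k)_\bbQ)$ into $\Mot(k)_\bbQ$ (after landing first in $\mathrm{KMM}(k)_\bbQ$ via $(-)^\vee$). Concretely this uses $\mathrm{KGL}_\bbQ\simeq\bigoplus_i\mathrm{HZ}_\bbQ(i)[2i]$ to identify the $\mathrm{KGL}$-linearization of $M(X)_\bbQ$ with the $\mathrm{KGL}$-linearization of $\Sigma^\infty(X_+)$, and then feeds this into the diagram \eqref{eq:diagram-1}/\eqref{eq:diagram-33}. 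Once the outer square commutes and the left vertical $\pi$ is the canonical projection to the orbit category, the desired inner square commutes by construction of $\overline{\mathrm{R}}$ as the factorization of $\mathrm{R}$ through $\pi$.

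The hard part will be full faithfulness of $\overline{\mathrm{R}}$. Here the strategy is: (a) by \cite[\S4]{Bridge} the classical comparison functor $\mathrm{DM}_{\mathrm{gm}}(k)_\bbQ/_{\!\!-\otimes\bbQ(1)[2]}\to\mathrm{KMM}(k)_\bbQ$ is fully faithful, so it suffices to show that $\overline{\mathrm{R}}$ coincides with (or is naturally isomorphic to) that functor; alternatively (b) prove full faithfulness directly by reducing, via the localization and $\bbA^1$-homotopy invariance packaged in Theorems \ref{thm:main} and \ref{thm:main2}, to generators of the form $M(Y)_\bbQ$ with $Y$ smooth projective, where Hom-groups on both sides are computed by the same $K$-cohomology/correspondence formulas. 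I expect route (a): one checks that on the subcategory generated by smooth projective schemes $\overline{\mathrm{R}}$ agrees with the functor of \cite{Bridge} — this is where Theorem \ref{thm:main2} is essential, guaranteeing that $\mathrm{DM}_{\mathrm{gm}}(k)_\bbQ$ is generated (as a thick triangulated category, rationally) by motives of smooth projective schemes, so that a natural isomorphism on those generators propagates — and then invokes that both functors are triangulated and symmetric monoidal to conclude they agree, hence $\overline{\mathrm{R}}$ inherits full faithfulness. The subtlety to watch is that over a field of positive characteristic the generation statement only holds after inverting $p$, but since we are already working rationally this causes no difficulty; the genuinely delicate bookkeeping is tracking the duality functors $(-)^\vee$ and $\uHom(-,\mathrm{U}(k)_\bbQ)$ so that the two a priori different fully faithful functors into $\mathrm{KMM}(k)_\bbQ$ are compared on the nose rather than merely up to an unspecified twist.
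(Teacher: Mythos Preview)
Your proposal is not wrong, but it misallocates the work and misidentifies where Theorem~\ref{thm:main2} enters. The paper's proof is two sentences: the \emph{outer} commutative square of \eqref{eq:diagram-3}---including the construction of $\overline{\mathrm{R}}$, its factorization through the orbit category, and its full faithfulness---is taken wholesale from \cite[Thm.~2.8]{Bridge}; nothing here is redone. The only new content in this paper is the \emph{inner} commutative squares, i.e.\ the diagonal arrow $\mathrm{Sm}(k)\to\mathrm{KMM}(k)_\bbQ$ and the fact that $\overline{\mathrm{R}}$ and the duality $(-)^\vee$ land in $\mathrm{KMM}(k)_\bbQ$ rather than merely in $\Mot(k)_\bbQ$. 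That refinement is exactly what Theorem~\ref{thm:main2} and Corollary~\ref{cor:main2} (applied to $E=\mathrm{U}$) supply: they guarantee that $\mathrm{U}(X)_\bbQ$ lies in $\mathrm{KMM}(k)_\bbQ$ and is strongly dualizable for every smooth $X$.

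So your ``hard part''---establishing full faithfulness of $\overline{\mathrm{R}}$ by comparing it with a second functor from \cite{Bridge} and invoking generation by smooth projectives---is not needed here: the functor $\overline{\mathrm{R}}$ \emph{is} the functor of \cite[Thm.~2.8]{Bridge}, and its full faithfulness was proved there. You have correctly sensed that Theorem~\ref{thm:main2} is the essential new input, but it is used to upgrade the target from $\Mot(k)_\bbQ$ to $\mathrm{KMM}(k)_\bbQ$, not to re-establish full faithfulness. Your route would work, but it duplicates the argument of \cite{Bridge} rather than citing it, and it obscures that the genuinely new contribution of this paper to the diagram is the middle column.
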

\begin{proof}
  The outer commutative square of diagram \eqref{eq:diagram-3} was
  constructed in \cite[Thm.~2.8]{Bridge}. The inner commutative
  squares follow from Theorem \ref{thm:main2} and
  Corollary \ref{cor:main2} applied to the functor $E=\mathrm{U}$.
\end{proof}
Intuitively speaking, Theorem \ref{thm:new2} formalizes the conceptual idea that the commutative world embeds fully-faithfully into the noncommutative world as soon as we ``$\otimes$-trivialize'' the Tate motive $\bbQ(1)[2]$. 
\begin{remark}[Voevodsky's motivic Gysin triangle]\label{rk:motivic2}
Let $X$ be a smooth scheme, $i\colon Z \hookrightarrow X$ a smooth closed subscheme of codimension $c$, and $j\colon U \hookrightarrow X$ the open complement of $Z$. 
Making use of algebraic geometric arguments such as the projective bundle theorem and the deformation to the normal cone, Voevodsky constructed in \cite[\S2]{Voevodsky} a motivic Gysin triangle
\begin{equation}\label{eq:Gysin-mot3}
M(U)_\bbQ \stackrel{M(j)_\bbQ}{\too} M(X)_\bbQ \too M(Z)_\bbQ(c)[2c] \stackrel{\partial}{\too} \Sigma M(U)_\bbQ
\end{equation}
in $\mathrm{DM}_{\mathrm{gm}}(k)_\bbQ$. Using the commutative diagram \eqref{eq:diagram-3}, we hence conclude that the image of \eqref{eq:Gysin-mot3} under the composed functor $\overline{\mathrm{R}} \circ \pi\colon \mathrm{DM}_{\mathrm{gm}}(k)_\bbQ \to \mathrm{KMM}(k)_\bbQ$ agrees with the dual of the rationalized noncommutative motivic Gysin triangle \eqref{eq:Gysin-mot1}. Roughly speaking, \eqref{eq:Gysin-mot1}$_\bbQ$ is the dual of the Tate $\otimes$-trivialization of \eqref{eq:Gysin-mot3}.
\end{remark}
\begin{remark}[Levine's mixed motives]
Levine introduced in \cite[Part I]{Levine} a triangle category of mixed motives $\cD\cM(k)$ and a {\em contravariant} symmetric monoidal functor $h\colon \mathrm{Sm}(k) \to \cD\cM(k)$. As proved in \cite[Part I \S VI Thm.~2.5.5]{Levine}, when $k$ admits resolution of singularities, the assignment $h(X)(n) \mapsto \uHom(M(X),\bbZ(n))$ gives rise to an equivalence of categories $\cD\cM(k) \to \mathrm{DM}_{\mathrm{gm}}(k)$ whose precomposition with $h$ identifies with $X \mapsto M(X)^\vee$. Thanks to Theorem \ref{thm:new2}, there exists then a $\bbQ$-linear, fully-faithful, symmetric monoidal functor $\overline{\mathrm{R}}$ making the diagram commute:
\begin{equation}\label{eq:diagram-4}
\xymatrix{
\mathrm{Sm}(k) \ar[d]_-{h(-)_\bbQ} \ar[rrr]^-{X \mapsto \perf_\dg(X)}&&& \dgcat(k) \ar[dd]^-{\mathrm{U}(-)_\bbQ} \\
\cD\cM(k)_\bbQ \ar[d]_-\pi &&& \\
\cD\cM(k)_\bbQ/_{\!\!-\otimes \bbQ(1)[2]} \ar[rrr]_-{\overline{\mathrm{R}}} &&& \mathrm{KMM}(k)_\bbQ \subset \mathrm{Mot}(k)_\bbQ \,.\quad \quad 
}
\end{equation}
\end{remark}
\subsection{{\'E}tale descent of noncommutative mixed motives}
Let $\mathrm{DM}^{\mathrm{et}}(k)$ be the {\'e}tale variant of $\mathrm{DM}(k)$ introduced by Voevodsky in \cite[\S3.3]{Voevodsky}. As proved in {\em loc. cit.}, we have an equivalence of categories $\mathrm{DM}(k)_\bbQ \simeq \mathrm{DM}^{\mathrm{et}}(k)_\bbQ$; consult also Ayoub's ICM survey \cite{Ayoub1}. Theorem \ref{thm:new2} leads then to the following {\'e}tale descent result: 
\begin{theorem}\label{thm:etale}
The presheaf of noncommutative mixed motives
\begin{eqnarray*}
\mathrm{Sm}(k)^\op \too \mathrm{KMM}(k)^\oplus_\bbQ && X \mapsto \mathrm{U}(X)_\bbQ
\end{eqnarray*}
satisfies {\em {\'e}tale descent}, \ie for every $X\in \mathrm{Sm}(k)$ and {\'e}tale cover $\cU=\{U_i \to X\}_{i \in I}$ of $X$, we have an induced isomorphism $\mathrm{U}(X)_\bbQ \simeq \mathrm{holim}_{n\geq 0} \mathrm{U}(\text{\v{C}}_n\cU)_\bbQ$, where $\text{\v{C}}_\bullet\cU$ stands for the \v{C}ech simplicial scheme associated to the cover $\cU$.
\end{theorem}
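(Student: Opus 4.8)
The plan is to transport the étale descent of Voevodsky's rational motives across the fully-faithful bridge constructed in Theorem~\ref{thm:new2}. First I would dispose of some harmless reductions: étale descent for a cover can be checked on any cofinal refinement, so, using that $X$ is quasi-compact, I may assume that $X$ and all the $U_i$ are of finite type over $k$ and that the index set is finite. Then every term $\check C_n\cU=\coprod_{(i_0,\dots,i_n)}U_{i_0}\times_X\cdots\times_X U_{i_n}$ of the \v{C}ech simplicial scheme is again smooth of finite type over $k$, so that by Corollary~\ref{cor:main2} (applied to $E=\mathrm U$) the objects $\mathrm U(\check C_n\cU)_\bbQ$ and $\mathrm U(X)_\bbQ$ are strongly dualizable in $\Mot(k)_\bbQ$ (in fact they lie in the rigid subcategory $\mathrm{KMM}(k)_\bbQ$ by Theorem~\ref{thm:main2}).

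The argument rests on two inputs. The first is étale \v{C}ech descent of rational motives in homological form: the canonical morphism $\hocolim_{\Delta^\op}M(\check C_\bullet\cU)_\bbQ\to M(X)_\bbQ$ is an isomorphism in $\mathrm{DM}(k)_\bbQ=\Mod(\mathrm{HZ}_\bbQ)$. This I would deduce from the equivalence $\mathrm{DM}(k)_\bbQ\simeq\mathrm{DM}^{\mathrm{et}}(k)_\bbQ$ recalled before the statement, together with the standard fact that étale-locally on $X$ the cover $\cU$ admits a section, whence the augmented simplicial presheaf with transfers $\bbZ_{\mathrm{tr}}(\check C_\bullet\cU)\to\bbZ_{\mathrm{tr}}(X)$ is étale-locally contractible (note that the change-of-topology functor $\mathrm{DM}(k)_\bbQ\to\mathrm{DM}^{\mathrm{et}}(k)_\bbQ$ carries the motive of a smooth scheme to the motive of that scheme). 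The second input is the comparison functor $\mathrm R\colon\mathrm{DM}(k)_\bbQ\to\mathrm{KMM}(k)_\bbQ^\oplus$ of \eqref{eq:functor-comparison}: being the composite of base change along the ring morphism $\mathrm{HZ}_\bbQ\to\mathrm{KGL}_\bbQ$ --- a left adjoint --- with $\Phi_\bbQ$, it preserves arbitrary direct sums, hence (being triangulated) all geometric realizations; moreover, by the construction of $\overline{\mathrm R}$ in Theorem~\ref{thm:new2}, the restriction of $\mathrm R$ to geometric motives factors through the orbit-category projection $\pi\colon\mathrm{DM}_{\mathrm{gm}}(k)_\bbQ\to\mathrm{DM}_{\mathrm{gm}}(k)_\bbQ/_{\!\!-\otimes\bbQ(1)[2]}$ followed by $\overline{\mathrm R}$. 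Reading off the outer commutative square of diagram~\eqref{eq:diagram-3}, this yields, for every smooth finite-type $k$-scheme $Y$, a natural isomorphism $\mathrm R(M(Y)_\bbQ)=\overline{\mathrm R}(\pi(M(Y)_\bbQ))\simeq\uHom(\mathrm U(Y)_\bbQ,\mathrm U(k)_\bbQ)=\mathrm U(Y)_\bbQ^\vee$, the last equality because $\mathrm U(k)_\bbQ$ is the $\otimes$-unit and $\mathrm U(Y)_\bbQ$ is dualizable.

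To conclude, I would apply $\mathrm R$ to the descent isomorphism of the first input and use the identification above to get $\hocolim_{\Delta^\op}\mathrm U(\check C_\bullet\cU)_\bbQ^\vee\simeq\mathrm U(X)_\bbQ^\vee$ in $\mathrm{KMM}(k)_\bbQ^\oplus$ --- the left-hand side being the geometric realization of the simplicial object obtained by (pointwise) dualizing the cosimplicial object $[n]\mapsto\mathrm U(\check C_n\cU)_\bbQ$. Then I would dualize once more. Since $\Mot(k)_\bbQ$ is closed symmetric monoidal (Corollary~\ref{cor:main2}) and well generated, hence complete, and since $\uHom(c,-)$ is a right adjoint for every object $c$, the functor $\uHom(-,\mathbf 1)$ converts homotopy colimits into homotopy limits over the opposite diagram; applying it to the previous isomorphism, and invoking the natural reflexivity isomorphism $((-)^\vee)^\vee\simeq\id$ on the dualizable objects $\mathrm U(\check C_n\cU)_\bbQ$ and $\mathrm U(X)_\bbQ$, gives $\mathrm U(X)_\bbQ\simeq\bigl(\hocolim_{\Delta^\op}\mathrm U(\check C_\bullet\cU)_\bbQ^\vee\bigr)^\vee\simeq\mathrm{holim}_{\Delta}\mathrm U(\check C_\bullet\cU)_\bbQ=\mathrm{holim}_{n\ge 0}\mathrm U(\check C_n\cU)_\bbQ$, which is precisely the asserted étale descent isomorphism.

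The step I expect to be the main obstacle is ensuring that the three functors involved --- the change-of-topology equivalence, $\mathrm R$, and the duality $(-)^\vee$ --- really do commute with the homotopy colimits over $\Delta^\op$ and homotopy limits over $\Delta$ that enter \v{C}ech descent: a triangulated category does not carry such (co)limits canonically, so the argument should be run at the level of the underlying stable model categories (or $\infty$-categories), where geometric realizations and totalizations are functorial and where $\mathrm R$, induced by a left Quillen functor followed by $\Phi$, is genuinely cocontinuous, and only then passed to homotopy categories. The remaining ingredients --- the reduction to finite covers, the étale-local contractibility of \v{C}ech nerves, and the factorization of $\mathrm R$ through the orbit category on geometric motives (following the proof of Theorem~\ref{thm:new2}) --- are routine given the results already established.
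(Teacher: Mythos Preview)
Your proposal is correct and follows essentially the same route as the paper: deduce \v{C}ech descent for $M(-)_\bbQ$ from the equivalence $\mathrm{DM}(k)_\bbQ\simeq\mathrm{DM}^{\mathrm{et}}(k)_\bbQ$, push it through the colimit-preserving comparison functor $\mathrm{R}$ of \eqref{eq:functor-comparison} to obtain the dual statement for $\mathrm{U}(-)_\bbQ^\vee$, and then dualize using that $\uHom(-,\mathrm{U}(k)_\bbQ)$ swaps homotopy colimits for limits and is an auto-equivalence on $\mathrm{KMM}(k)_\bbQ$. Your additional care about finiteness of the cover, dualizability via Corollary~\ref{cor:main2}, and the need to argue at the model-categorical level are welcome elaborations, but the underlying argument is the paper's own.
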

\begin{proof}
  Thanks to the equivalence of categories $\mathrm{DM}(k)_\bbQ \simeq
  \mathrm{DM}^{\mathrm{et}}(k)_\bbQ$, we have an induced isomorphism
  $M(X)_\bbQ \simeq \mathrm{hocolim}_{n\geq 0}
  M(\text{\v{C}}_n\cU)_\bbQ$ in $\mathrm{DM}(k)_\bbQ$. Since by
  construction the functor \eqref{eq:functor-comparison} preserves
  homotopy colimits, we hence conclude from Theorem \ref{thm:new2}
  that $\mathrm{U}(X)^\vee_\bbQ \simeq
  \mathrm{hocolim}_{n\geq 0}
  \mathrm{U}(\text{\v{C}}_n\cU)^\vee_\bbQ$. The proof
  follows now from the fact that the functor
  $\uHom(-,\mathrm{U}(k)_\bbQ)\colon \Mot(k)_\bbQ \to \Mot(k)_\bbQ$
  interchanges homotopy colimits with homotopy limits and restricts to
  a (contravariant) equivalence of categories $(-)^\vee\colon
  \mathrm{KMM}(k)_\bbQ\to \mathrm{KMM}(k)_\bbQ$.
\end{proof}
\medbreak\noindent\textbf{Acknowledgments.} G.~Tabuada is very grateful to Joseph Ayoub for useful ``motivic'' discussions. He also would like to thank the hospitality of the Department of Mathematics of the University of Hasselt, Belgium, where this work was initiated. The authors are also grateful to Mikhail Bondarko for the references \cite{Bondarko,Kelly}.
\section{Preliminaries}
\subsection{Dg categories}\label{sub:dg}
Let $(\cC(k),\otimes, k)$ be the category of (cochain) complexes of 
$k$-vector spaces; we use cohomological notation. A {\em differential
  graded (=dg) category $\cA$} is a category enriched over $\cC(k)$
and a {\em dg functor} $F:\cA\to \cB$ is a functor enriched over
$\cC(k)$; for further details consult Keller's ICM survey
\cite{ICM-Keller}.

Let $\cA$ be a dg category. The opposite dg category $\cA^\op$ has the
same objects as $\cA$ and $\cA^\op(x,y):=\cA(y,x)$. A {\em right dg
  $\cA$-module} is a dg functor $\cA^\op \to \cC_\dg(k)$ with values
in the dg category $\cC_\dg(k)$ of complexes of $k$-vector spaces. Let
us write $\cC(\cA)$ for the category of right dg
$\cA$-modules. Following \cite[\S3.2]{ICM-Keller}, the derived
category $\cD(\cA)$ of $\cA$ is defined as the localization of
$\cC(\cA)$ with respect to the objectwise quasi-isomorphisms. Let
$\cD_c(\cA)$ be the triangulated subcategory of compact objects.

A dg functor $F:\cA\to \cB$ is called a {\em Morita equivalence} if it
induces an equivalence of categories $\cD(\cB) \stackrel{\simeq}{\to}
\cD(\cA)$; see \cite[\S4.6]{ICM-Keller}. As proved in
\cite[Thm.~5.3]{Additive}, $\dgcat(k)$ admits a Quillen model
structure whose weak equivalences are the Morita equivalences. Let us
denote by $\Hmo(k)$ the associated homotopy category.

The {\em tensor product $\cA\otimes\cB$} of dg categories is defined
as follows: the set of objects is the cartesian product and
$(\cA\otimes\cB)((x,w),(y,z)):= \cA(x,y) \otimes \cB(w,z)$. As
explained in \cite[\S2.3]{ICM-Keller}, this construction gives rise to
a symmetric monoidal structure on $\dgcat(k)$, which descends to the
homotopy category
$\Hmo(k)$. 

An {\em $\cA\text{-}\cB$-bimodule $\mathrm{B}$} is a dg functor
$\mathrm{B}\colon \cA\otimes \cB^\op \to \cC_\dg(k)$ or equivalently a
right dg $(\cA^\op \otimes \cB)$-module. A standard example is the
$\cA\text{-}\cB$-bimodule
\begin{eqnarray}\label{eq:bimodule2}
{}_F\mathrm{B}:\cA\otimes \cB^\op \to \cC_\dg(k) && (x,z) \mapsto \cB(z,F(x))
\end{eqnarray}
associated to a dg functor $F:\cA\to \cB$.

Recall from Kontsevich \cite{IAS,ENS,Miami,finMot} that a dg category $\cA$ is called {\em smooth} if the $\cA\text{-}\cA$-bimodule ${}_{\id}\cB$ belongs to the triangulated category $\cD_c(\cA^\op\otimes \cA)$ and {\em proper} if $\sum_i \mathrm{dim}\, H^i\cA(x,y)< \infty$ for any ordered pair of objects $(x,y)$. Examples include the finite dimensional $k$-algebras of finite global dimension (when $k$ is perfect) as well as the dg categories $\perf_\dg(Y)$ associated to smooth proper $k$-schemes $Y$.
\subsection{Localizing invariants}\label{sub:localizing}
Let $E\colon \dgcat(k) \to \cT$ be a functor, with values in a
triangulated category, which inverts Morita equivalences. Thanks to
the universal property of the homotopy category $\Hmo(k)$, we have an
induced functor $E\colon \Hmo(k) \to \cT$. Recall from
\cite[Thm.~4.11]{ICM-Keller} that the homotopy category $\Hmo(k)$ is
pointed\footnote{The dg category with one object and one morphism is
  the initial=terminal object of $\Hmo(k)$.} and that a short exact
sequence of dg categories
\begin{equation}\label{eq:ses}
0 \too \cA \stackrel{I}{\too} \cB \stackrel{P}{\too} \cC \too 0
\end{equation}
consists of morphisms $I$ and $P$ in $\Hmo(k)$ such that $P\circ I=0$, $I$ is the kernel of $P$, and $P$ is the cokernel of $I$. A ``generic'' example is given by the Drinfeld's dg quotient $\cA \subset \cB \to \cB/\cA$ of an inclusion of dg categories; consult \cite{Drinfeld} for details.
\begin{definition}\label{def:localizing}
A functor $E\colon \dgcat(k) \to \cT$ as above is called a {\em localizing invariant} if the induced functor $E\colon \Hmo(k) \to \cT$ sends   short exact sequences of dg categories \eqref{eq:ses} to distinguished triangles 
$$ E(\cA) \stackrel{E(I)}{\too} E(\cB) \stackrel{E(P)}{\too} E(\cC) \stackrel{\partial}{\too} \Sigma E(\cA)\,.$$
in a way which is functorial for strict morphisms of exact sequences.
\begin{remark} Using the methods in \cite{Exact}, one may show
    that the functoriality of $E$ on strict morphisms of exact
    sequences of dg categories implies that $E$ is functorial on
    morphisms between exact sequences of dg categories in $\Hmo(k)$.
\end{remark}
\end{definition}
\begin{example}[Mixed complex]
Following Kassel \cite{Kassel}, a {\em mixed complex} is a (right) dg module over the algebra of dual numbers $\Lambda:=k[\epsilon]/\epsilon^2$ with $\mathrm{deg}(\epsilon)=-1$ and $d(\epsilon)=0$. As proved by Keller in \cite[\S1.5]{Exact}, the mixed complex construction gives rise to a localizing invariant $C\colon \dgcat(k) \to \cD(\Lambda)$. Since Hochschild homology, cyclic homology, negative cyclic homology, and periodic cyclic homology factor through $C$, they are also examples of localizing invariants; consult \cite[\S2.2]{Kel1} for details.
\end{example}
\begin{remark}[Quillen model]\label{rk:generic}
Let $\cM$ be a stable Quillen model category and $E\colon \dgcat(k) \to \cM$ a functor which sends the Morita equivalences to weak equivalences and the Drinfeld's dg quotients $\cA \subset \cB \to \cB/\cA$ to homotopy (co)fiber sequences $E(\cA) \to E(\cB) \to E(\cB/\cA)$. Consider the associated composition
\begin{equation}\label{eq:associated}
\dgcat(k) \stackrel{E}{\too} \cM \too \Ho(\cM)
\end{equation}
with values in the homotopy triangulated category. Clearly, the functor \eqref{eq:associated} inverts Morita equivalences. Moreover, it sends in a functorial way the Drinfeld's dg quotients to distinguished triangles. As proved by Keller in \cite[\S4]{Exact}, every short exact sequence of dg categories \eqref{eq:ses} can be ``strictified'', in a functorial way, into the Drinfeld's dg quotient $\cA \subset \cB \to \cB/\cA$ of an inclusion of dg categories. This hence implies that \eqref{eq:associated} is a localizing invariant.
\end{remark}
\begin{example}
Examples \ref{ex:KH}-\ref{ex:etale} and \ref{ex:Mot} fit into the framework of Remark \ref{rk:generic}. Further examples include nonconnective algebraic $K$-theory, topological Hochschild homology, topological cyclic homology, etc; consult \cite{book} for details.
\end{example}
\subsection{Universal additive invariant and its relation with NC motives}
\label{sec:universal}
  We start by recalling from \cite{Additive} the construction of the
  universal additive invariant. 
  As proved in \cite[Cor.~5.10]{Additive}, there is a
  natural bijection between $\Hom_{\Hmo(k)}(\cA,\cB)$ and the set of
  isomorphism classes of the full triangulated subcategory
  $\rep(\cA,\cB)\subset \cD(\cA^\op \otimes \cB)$ of those
  $\cA\text{-}\cB$-bimodules $\mathrm{B}$ such that for every $x \in
  \cA$ the right dg $\cB$-module $\mathrm{B}(x,-)$ belongs to
  $\cD_c(\cB)$. Under this bijection, the composition law of $\Hmo(k)$
  corresponds to the tensor product of bimodules. Since the bimodules
  \eqref{eq:bimodule2} belong to $\rep(\cA,\cB)$, we have the tautological functor
\begin{eqnarray}\label{eq:functor1}
\dgcat(k)\to \Hmo(k) & \cA \mapsto \cA & F \mapsto {}_F \mathrm{B}\,.
\end{eqnarray}
The {\em additivization} of $\Hmo(k)$ is the additive category
$\Hmo_0(k)$ with the same objects and with abelian groups of morphisms
$\Hom_{\Hom_0(k)}(\cA,\cB)$ given by the Grothendieck group
$K_0\rep(\cA,\cB)$ of the triangulated category
$\rep(\cA,\cB)$. By construction, we have the following functor
 \begin{eqnarray}\label{eq:functor2}
\Hmo(k)\to \Hmo_0(k) & \cA \mapsto \cA & \mathrm{B} \mapsto [\mathrm{B}]\,.
\end{eqnarray}
Let us denote by $\mathrm{U}_{\mathrm{add}}$ the composition
\eqref{eq:functor2}$\circ$\eqref{eq:functor1}. As proved in
\cite[Thms.~5.3 and 6.3]{Additive}, the functor
$\mathrm{U}_{\mathrm{add}}\colon \dgcat(k) \to \Hmo_0(k)$ is the {\em
  universal additive invariant}, \ie given any additive category
$\mathrm{A}$ we have an induced equivalence of categories
\begin{equation}\label{eq:equivalence}
  \mathrm{U}_{\mathrm{add}}\colon \Fun_{\mathrm{additive}}(\Hmo_0(k),\mathrm{A}) \stackrel{\simeq}{\too} \Fun_{\mathrm{add}}(\dgcat(k),\mathrm{A})\,,
\end{equation}
where the left-hand side denotes the category of additive functors and
the right-hand side the category of additive invariants (see \S\ref{sec:additive}). 

As mentioned
in \S\ref{sec:additive}, every localizing invariant is in particular
an additive invariant. Therefore, since the functor $\mathrm{U}:\dgcat(k)\r \Mot(k)$ (see Example \ref{ex:Mot}) is a localizing invariant, and hence an additive invariant, it factors as follows:
\begin{equation}
\label{eq:locadd}
\xymatrix{
\dgcat(k) \ar[d]_-{\mathrm{U}_{\mathrm{add}}} \ar[rr]^{\mathrm{U}} &&  \Mot(k) &\\
\Hmo_0(k) \ar@/_0.5pc/[urr]_{\overline{\mathrm{U}}} && \,.
}
\end{equation}
The following Proposition \ref{prop:K-theory1} and Lemma \ref{lem:invertible}, concerning the functors $\mathrm{U}$ and $\mathrm{U}_{\mathrm{add}}$, will play a key role in the proof of Theorem \ref{thm:cellular2}.
\begin{proposition}\label{prop:K-theory1}
Given a smooth proper $k$-scheme $X$ and a smooth $k$-scheme $Y$, there are isomorphisms of abelian groups
\begin{eqnarray}\label{eq:K-isos1}
\Hom_{\Mot(k)}(\mathrm{U}(X),\Sigma^n\mathrm{U}(Y))\simeq K_{-n}(X\times Y) && n \in \bbZ\,.
\end{eqnarray}
In particular, the abelian group \eqref{eq:K-isos1} is zero whenever $n>0$.
\end{proposition}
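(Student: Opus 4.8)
The plan is to compute the morphism groups in the category $\Mot(k)$ directly from the known structure of noncommutative motives. Recall that for dg categories $\cA, \cB$ there is a description of $\Hom_{\Mot(k)}(\mathrm{U}(\cA), \Sigma^n \mathrm{U}(\cB))$ when $\cA$ is smooth proper: one uses that $\mathrm{U}(\cA)^\vee \simeq \mathrm{U}(\cA^\op)$ for smooth proper $\cA$ (dualizability, cf. \cite[Thm.~1.43]{book}), so that $\Hom_{\Mot(k)}(\mathrm{U}(\cA), \Sigma^n\mathrm{U}(\cB)) \simeq \Hom_{\Mot(k)}(\mathrm{U}(k), \Sigma^n(\mathrm{U}(\cA^\op)\otimes \mathrm{U}(\cB))) \simeq \Hom_{\Mot(k)}(\mathrm{U}(k), \Sigma^n \mathrm{U}(\cA^\op \otimes \cB))$, using that $\mathrm{U}$ is symmetric monoidal. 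Then one invokes the fundamental computation of the ``endomorphisms of the unit'' in $\Mot(k)$, namely that $\Hom_{\Mot(k)}(\mathrm{U}(k), \Sigma^{-n}\mathrm{U}(\cC)) \simeq K_n(\cC)$ for any dg category $\cC$ — this is the defining homotopy-theoretic property of the localizing-and-$\bbA^1$-invariant motivic category, the corepresentability of $K$-theory (see \cite[\S2]{A1-homotopy}). Applying this with $\cC = \cA^\op \otimes \cB$ gives $\Hom_{\Mot(k)}(\mathrm{U}(\cA), \Sigma^n\mathrm{U}(\cB)) \simeq K_{-n}(\cA^\op \otimes \cB)$.

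Next I would specialize to $\cA = \perf_\dg(X)$ with $X$ smooth proper, and $\cB = \perf_\dg(Y)$ with $Y$ smooth. Here one needs the identification $\perf_\dg(X)^\op \simeq \perf_\dg(X)$ (Morita equivalence, via the duality $\mathcal{F} \mapsto \uRHom(\mathcal{F}, \cO_X)$ on perfect complexes), together with the Künneth-type statement $\perf_\dg(X) \otimes \perf_\dg(Y) \simeq \perf_\dg(X \times_k Y)$ in $\Hmo(k)$. This last equivalence is standard when one of the factors is smooth proper (or more generally under suitable flatness/finiteness, which holds here since $X,Y$ are smooth over the field $k$); it follows from the fact that the external tensor product $\boxtimes$ induces an equivalence on the level of perfect complexes, cf.~the references on semi-orthogonal/derived tensor products of schemes. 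Combining, $K_{-n}(\perf_\dg(X)^\op \otimes \perf_\dg(Y)) \simeq K_{-n}(\perf_\dg(X\times Y)) = K_{-n}(X\times Y)$, which is exactly \eqref{eq:K-isos1}. For the final ``in particular'' clause, since $X \times Y$ is a (smooth, hence) regular Noetherian scheme, its negative $K$-groups vanish: $K_{-n}(X\times Y) = 0$ for $n > 0$ by the theorem of Weibel (now a theorem, formerly his conjecture on negative $K$-theory of regular schemes), so the group \eqref{eq:K-isos1} is zero for $n>0$.

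The main obstacle I anticipate is bookkeeping around which of the two factors must be smooth proper in order to legitimately apply dualizability and to have the tensor product of dg categories of perfect complexes compute the product scheme. The cleanest route is to put the smooth proper hypothesis on $X$ (as in the statement), dualize $\mathrm{U}(X)$, and then note that $\mathrm{U}(X)^\vee \simeq \mathrm{U}(\perf_\dg(X)^\op) \simeq \mathrm{U}(X)$ because $\perf_\dg(X)$ is Morita self-dual; the tensor factor $Y$ only needs to be smooth (not proper) for the Künneth equivalence $\perf_\dg(X)\otimes\perf_\dg(Y)\simeq\perf_\dg(X\times Y)$ since the smooth proper side already provides enough finiteness — this is exactly the asymmetry reflected in the statement. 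One must also make sure the isomorphism is one of abelian groups (it is, since everything in sight is additive and the corepresentability isomorphism $\Hom(\mathrm{U}(k),\Sigma^{-n}\mathrm{U}(\cC))\simeq K_n(\cC)$ is a group isomorphism), and that the degree conventions match ($\Sigma^n$ on the right corresponds to $K_{-n}$). Beyond these points the argument is a direct assembly of results already available, so I would present it as a short chain of natural isomorphisms followed by the appeal to regularity for the vanishing.
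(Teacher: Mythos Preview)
Your approach is essentially what lies behind the paper's one-line proof: the paper just cites \cite[Cor.~2.7]{A1-homotopy}, which packages the dualizability-plus-corepresentability computation you spell out, and then adds a single sentence. However, one step needs correcting. The corepresentability statement in $\Mot(k)$ (which in this paper is the $\bbA^1$-localized category $\Mot^{\bbA^1}_{\mathrm{loc}}(k)$ of Example~\ref{ex:Mot}) is
\[
\Hom_{\Mot(k)}(\mathrm{U}(k),\Sigma^{-n}\mathrm{U}(\cC))\;\simeq\;KH_n(\cC),
\]
\ie \emph{homotopy} $K$-theory, not algebraic $K$-theory. Your formulation with $K_n(\cC)$ for an arbitrary dg category $\cC$ cannot hold, since it would force algebraic $K$-theory itself to be $\bbA^1$-homotopy invariant. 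So your chain of isomorphisms actually lands in $KH_{-n}(X\times Y)$, and you must then insert the extra step that $KH$ agrees with $K$ on smooth schemes --- which is exactly the sentence the paper adds after invoking the cited corollary. With that fix, the vanishing for $n>0$ follows (and you may appeal simply to the agreement of connective and nonconnective $K$-theory on regular noetherian schemes, rather than to Weibel's conjecture).

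Your worries about the side conditions for the K\"unneth equivalence and the self-duality of $\perf_\dg(X)$ are unnecessary: Lemma~\ref{lem:product} gives the Morita equivalence $\perf_\dg(X)\otimes\perf_\dg(Y)\to\perf_\dg(X\times Y)$ for arbitrary quasi-compact quasi-separated $k$-schemes, and $\cF\mapsto\underline{\Hom}_X(\cF,\cO_X)$ is a Morita equivalence $\perf_\dg(X)^\op\to\perf_\dg(X)$ for any such $X$ (cf.\ the proof of Lemma~\ref{lem:invertible}). The smooth proper hypothesis on $X$ is needed precisely where you first use it: to ensure that $\perf_\dg(X)$ is a smooth proper dg category, so that $\mathrm{U}(X)$ is strongly dualizable in $\Mot(k)$.
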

\begin{proof}
As proved in \cite[Cor.~2.7]{A1-homotopy}, the left-hand side of \eqref{eq:K-isos1} is isomorphic to $KH_{-n}(X \times Y)$. The proof follows then from the fact that homotopy $K$-theory agrees with Quillen's algebraic $K$-theory on smooth schemes.
\end{proof}
\begin{lemma}\label{lem:invertible}
Given smooth $k$-schemes $X$ and $Y$, with $X$ proper, the morphism
\begin{equation}\label{eq:induced-last}
\Hom_{\Hmo_0(k)}(\mathrm{U}_{\mathrm{add}}(X),\mathrm{U}_{\mathrm{add}}(Y))\too \Hom_{\Mot(k)}(\mathrm{U}(X),\mathrm{U}(Y))\,,
\end{equation}
induced by the additive functor $\overline{\mathrm{U}}$, is invertible.
\end{lemma}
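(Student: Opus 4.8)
The plan is to identify both sides of \eqref{eq:induced-last} with an explicit $K$-theoretic group and to check that the map $\overline{\mathrm{U}}$ induces the identity under these identifications. First I would recall, from \S\ref{sec:universal}, that $\Hom_{\Hmo_0(k)}(\cA,\cB) = K_0\rep(\cA,\cB)$, and that under the bijection $\Hom_{\Hmo(k)}(\cA,\cB)\simeq \{\text{iso.\ classes in }\rep(\cA,\cB)\}$ the abelian-group structure is induced by direct sum of bimodules. Applying this with $\cA=\perf_\dg(X)$ and $\cB=\perf_\dg(Y)$, and using that for $X$ smooth proper the dg category $\perf_\dg(X)$ is smooth and proper (see \S\ref{sub:dg}), one has $\rep(\perf_\dg(X),\perf_\dg(Y)) \simeq \cD_c(\perf_\dg(X)^\op\otimes\perf_\dg(Y)) \simeq \perf_\dg(X\times Y)$, where the last equivalence is the standard identification of the dg category of perfect complexes on a product with the compact objects of the tensor product of the dg enhancements (properness of $X$ guarantees that every object of $\cD_c(\cB^\op\otimes\cA)$ already satisfies the pointwise-compactness condition defining $\rep$). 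Consequently the left-hand side of \eqref{eq:induced-last} is $K_0(\perf_\dg(X\times Y)) = K_0(X\times Y)$.

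Next I would compute the right-hand side. By Proposition \ref{prop:K-theory1} (with $n=0$), $\Hom_{\Mot(k)}(\mathrm{U}(X),\mathrm{U}(Y))\simeq K_0(X\times Y)$. This isomorphism is obtained, as in the cited \cite[Cor.~2.7]{A1-homotopy}, by realizing $\mathrm{Hom}$-groups in $\Mot(k)$ between motives of smooth proper dg categories as $K$-theory of the tensor-product dg category — i.e.\ it factors through exactly the same identification $\rep(\perf_\dg(X),\perf_\dg(Y))\simeq \perf_\dg(X\times Y)$ composed with the (split) localization sequence that computes $KH_0 = K_0$ on smooth schemes. So both sides are canonically $K_0(X\times Y)$, and what remains is to check that the comparison map $\overline{\mathrm{U}}$, which by \eqref{eq:locadd} satisfies $\mathrm{U} = \overline{\mathrm{U}}\circ \mathrm{U}_{\mathrm{add}}$, is compatible with these identifications.

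For the compatibility, I would unwind the definition of $\overline{\mathrm{U}}$ on morphisms: by construction $\overline{\mathrm{U}}\colon \Hmo_0(k)\to\Mot(k)$ sends the class $[\mathrm{B}]\in K_0\rep(\cA,\cB)$ of a bimodule to the morphism $\mathrm{U}(\cA)\to\mathrm{U}(\cB)$ determined by $\mathrm{B}$ (viewed as an element of $\Hom_{\Hmo(k)}(\cA,\cB)$) and is additive, hence on $K_0$ it is induced by the assignment $[\mathrm{B}]\mapsto \mathrm{U}(\mathrm{B})$. Under the identification $K_0\rep(\perf_\dg(X),\perf_\dg(Y))\simeq K_0(X\times Y)$ this assignment is precisely the canonical realization of a perfect complex on $X\times Y$ as a morphism of noncommutative motives used in Proposition \ref{prop:K-theory1}; so \eqref{eq:induced-last} becomes the identity on $K_0(X\times Y)$, and in particular it is invertible.

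I expect the main obstacle to be the bookkeeping in the last paragraph: making precise that the isomorphism of Proposition \ref{prop:K-theory1} in degree $0$ is, on the nose, induced by the bimodule-to-morphism correspondence rather than merely abstractly isomorphic to $K_0(X\times Y)$. This requires tracing through the construction of \cite[Cor.~2.7]{A1-homotopy} and the identification $\rep \simeq \perf_\dg(X\times Y)$ to see that the two routes from a perfect complex on $X\times Y$ to $\Hom_{\Mot(k)}(\mathrm{U}(X),\mathrm{U}(Y))$ — one via $\mathrm{U}_{\mathrm{add}}$ and $\overline{\mathrm{U}}$, the other directly via $\mathrm{U}$ applied to the bimodule — literally coincide; this is immediate from $\mathrm{U}=\overline{\mathrm{U}}\circ\mathrm{U}_{\mathrm{add}}$ once the $K_0$-level descriptions are set up, so the work is entirely in setting up those descriptions carefully.
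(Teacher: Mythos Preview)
Your proposal is correct and follows essentially the same route as the paper: identify both sides with $K_0(X\times Y)$ via $\rep(\perf_\dg(X),\perf_\dg(Y))\simeq \cD_c(\perf_\dg(X)^\op\otimes\perf_\dg(Y))\simeq \perf(X\times Y)$ (using smooth-properness of $\perf_\dg(X)$, the duality Morita equivalence $\perf_\dg(X)^\op\simeq\perf_\dg(X)$, and Lemma~\ref{lem:product}) together with Proposition~\ref{prop:K-theory1}, and conclude compatibility from the factorization $\mathrm{U}=\overline{\mathrm{U}}\circ\mathrm{U}_{\mathrm{add}}$. The paper is terser on the last compatibility step, simply invoking the construction of diagram~\eqref{eq:locadd}, whereas you spell out the bookkeeping more explicitly; but the argument is the same.
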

\begin{proof}
  Thanks to Proposition \ref{prop:K-theory1}, the right-hand side of
  \eqref{eq:induced-last} is naturally isomorphic to $K_0(X\times
  Y)$. For the left-hand side we have the
  identifications
\begin{eqnarray}
  \Hom_{\Hmo_0(k)}(\mathrm{U}_{\mathrm{add}}(X),\mathrm{U}_{\mathrm{add}}(Y))
&\simeq &  K_0(\rep(\perf_\dg(X)^\op,\perf_\dg(Y))) \label{eq:mo1}\nonumber\\
 & \simeq & K_0(\perf_\dg(X)^\op \otimes \perf_\dg(Y)) 
\label{eq:mo2} \\
  & \simeq & K_0(\perf_\dg(X)\otimes \perf_\dg(Y))\label{eq:mo3}\\
  & \simeq & K_0(\perf_{\dg}(X\times Y))\label{eq:mo4}\\
  & \simeq & K_0(X\times Y) \nonumber
\,, \label{eq:2}
\end{eqnarray}
where \eqref{eq:mo2} follows from the fact that $\rep(\cA,\cB)\simeq
\cD_c(\cA^\op \otimes \cB)$ for any two dg categories with $\cA$ smooth and proper,
\eqref{eq:mo3} follows from the Morita equivalence
$\perf_\dg(X)^\op \to \perf_\dg(X)$ given by $\cF \mapsto
\underline{\Hom}_X(\cF,\cO_X)$, and \eqref{eq:mo4} from
Lemma \ref{lem:product}. The proof follows now from the construction of the diagram \eqref{eq:locadd}.
\end{proof}

\subsection{Quasi-coherent sheaves and their derived categories}
Given a quasi-compact quasi-separated $k$-scheme $X$, let us write
$\mathrm{Mod}(X)$ for the Grothendieck category of $\cO_X$-modules,
$\mathrm{Qcoh}(X)$ for the full subcategory of quasi-coherent
$\cO_X$-modules, $\cD(X):=\cD(\mathrm{Mod}(X))$ for the derived
category of $X$, and $\cD_{\mathrm{Qcoh}}(X)\subset \cD(X)$ for the
full triangulated subcategory of those complexes of $\cO_X$-modules
with quasi-coherent cohomology. In the same vein, given a closed
subscheme $Z \hookrightarrow X$, let us write $\cD(X)_Z\subset \cD(X)$
and $\cD_{\mathrm{Qcoh}}(X)_Z \subset \cD_{\mathrm{Qcoh}}(X)$ for the
full triangulated subcategories of those complexes of $\cO_X$-modules
that are supported on $Z$.
\begin{theorem}[Compact generation]
\label{th:cg}
Assume that the open complement of $Z$ is also quasi-compact quasi-separated. Under this assumption, the triangulated category $\cD_{\mathrm{Qcoh}}(X)_Z$ is compactly generated. Moreover, its full triangulated subcategory of compact objects identifies with $\perf(X)_Z$.
\end{theorem}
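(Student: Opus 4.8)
The plan is to realize $\cD_{\mathrm{Qcoh}}(X)_Z$ as the kernel of a Bousfield localization and then feed this into Neeman's localization machinery, the decisive geometric input being the Thomason--Trobaugh approximation theorem. Write $j\colon U \hookrightarrow X$ for the open complement of $Z$. Since $X$ is quasi-compact quasi-separated and $U$ is too (by hypothesis), the morphism $j$ is quasi-compact and separated, so the derived push-forward $Rj_\ast$ preserves quasi-coherence and is a fully faithful right adjoint to the restriction functor $j^\ast\colon \cD_{\mathrm{Qcoh}}(X) \too \cD_{\mathrm{Qcoh}}(U)$. Hence $j^\ast$ is a Verdier (indeed Bousfield) localization, and its kernel is by definition $\cD_{\mathrm{Qcoh}}(X)_Z$; in other words we obtain a localization sequence
\[
\cD_{\mathrm{Qcoh}}(X)_Z \hookrightarrow \cD_{\mathrm{Qcoh}}(X) \stackrel{j^\ast}{\too} \cD_{\mathrm{Qcoh}}(U)\,.
\]

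Next I would invoke the by-now-classical results of B\"okstedt--Neeman and Thomason--Trobaugh: for any quasi-compact quasi-separated $k$-scheme $W$, the category $\cD_{\mathrm{Qcoh}}(W)$ is equivalent to $\cD(\mathrm{Qcoh}(W))$, is compactly generated (in fact by a single perfect complex), and its subcategory of compact objects is exactly $\perf(W)$. This applies to $W = X$ and $W = U$. The restriction functor $j^\ast$ visibly sends perfect complexes to perfect complexes, and the induced functor $\perf(X) \too \perf(U)$ is, by the Thomason--Trobaugh approximation theorem, essentially surjective up to direct summands: every perfect complex on $U$ is a direct summand in $\cD_{\mathrm{Qcoh}}(U)$ of the restriction of one on $X$.

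With these inputs in place the argument is formal. Let $\cS \subseteq \perf(X)_Z$ be the set of perfect complexes on $X$ supported on $Z$. By Neeman's localization theorem the localizing subcategory $\langle \cS\rangle \subseteq \cD_{\mathrm{Qcoh}}(X)$ is compactly generated with $\langle \cS\rangle^{\,c}$ equal to the thick closure of $\cS$ inside $\perf(X)$; since $\perf(X)_Z$ is already thick in $\perf(X)$, this gives $\langle\cS\rangle^{\,c} = \perf(X)_Z$. It remains to see $\langle\cS\rangle = \cD_{\mathrm{Qcoh}}(X)_Z$. For this, again by Neeman's theorem the Verdier quotient $\cD_{\mathrm{Qcoh}}(X)/\langle\cS\rangle$ is compactly generated with compacts the idempotent completion of $\perf(X)/\perf(X)_Z$; Thomason--Trobaugh identifies this idempotent completion with $\perf(U) = \cD_{\mathrm{Qcoh}}(U)^{\,c}$. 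Thus the functor $\cD_{\mathrm{Qcoh}}(X)/\langle\cS\rangle \too \cD_{\mathrm{Qcoh}}(U)$ induced by $j^\ast$ is an equivalence on compact objects between two compactly generated categories, hence an equivalence; comparing kernels yields $\langle\cS\rangle = \ker(j^\ast) = \cD_{\mathrm{Qcoh}}(X)_Z$. Passing to compact objects gives the second assertion $(\cD_{\mathrm{Qcoh}}(X)_Z)^{\,c} = \perf(X)_Z$.

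The main obstacle is not really in the formal shuffling of Bousfield localizations but in the two imported ingredients: the Thomason--Trobaugh approximation theorem (whose proof uses Thomason's classification of thick subcategories and a cofinality argument) and the fact that $Rj_\ast$ preserves quasi-coherence together with $\cD_{\mathrm{Qcoh}} \simeq \cD(\mathrm{Qcoh})$ for quasi-compact quasi-separated schemes --- precisely the points where the quasi-compactness and quasi-separatedness hypotheses on $X$ and on $U$ are used. Once these are cited, the remainder is bookkeeping with Neeman's localization theorem.
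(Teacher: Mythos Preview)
Your argument is correct. The paper's own ``proof'' is just a pointer: it tells the reader to imitate \cite[Thm.~3.1.1]{BV} (with further references to Neeman and the Stacks project), which proceeds by induction on the number of affine opens needed to cover $X$, constructing an explicit perfect generator supported on $Z$ at each stage via a Mayer--Vietoris/gluing argument (in the affine base case the generator is a Koszul-type complex on a sequence cutting out $Z$).

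Your route is genuinely different: rather than building a generator by hand, you deduce compact generation of the kernel from compact generation of the ambient category and of the quotient, using Neeman's localization theorem together with the Thomason--Trobaugh identification of $\perf(U)$ with the idempotent completion of $\perf(X)/\perf(X)_Z$. This is cleaner and more conceptual, and it packages the geometric work into two citable black boxes; the price is that those black boxes (especially Thomason--Trobaugh) are themselves substantial, whereas the Bondal--Van den Bergh induction is more self-contained and actually produces a single compact generator. The only step worth making a touch more explicit is the passage ``equivalence on compacts between compactly generated categories, hence an equivalence'': this uses that the induced functor $\cD_{\mathrm{Qcoh}}(X)/\langle\cS\rangle \to \cD_{\mathrm{Qcoh}}(U)$ preserves coproducts (clear, since $j^\ast$ and the Verdier quotient do), so that the usual d\'evissage argument on localizing subcategories applies.
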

\begin{proof}
Simply imitate the proof of \cite[Thm.~3.1.1]{BV}; consult also \cite{Neeman3}\cite[Tag 0AEC, Lem. 62.14.5]{Stacks}.
\end{proof}
The following result will play a key role in the proof of Theorem \ref{thm:Nisnevich}.
\begin{proposition}\label{prop:SES} 
  Let $X$ be a quasi-compact quasi-separated scheme, $p\colon V \hookrightarrow X$ a
  quasi-compact open subscheme, and $W \hookrightarrow X$ the closed
  complement of $V$. For every closed subscheme $i\colon Z
  \hookrightarrow X$ with quasi-compact complement, we have an induced short exact sequence of dg
  categories
\begin{equation}\label{eq:SES-support}
0 \too \perf_\dg(X)_{Z\cap W} \too \perf_\dg(X)_Z \stackrel{p^\ast}{\too} \perf_\dg(V)_{Z\cap V}\too 0\,.
\end{equation}
\end{proposition}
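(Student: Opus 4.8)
The plan is to exhibit the sequence \eqref{eq:SES-support} as a Drinfeld dg quotient. First note that $\perf_\dg(X)_{Z\cap W}$ is a full dg subcategory of $\perf_\dg(X)_Z$ (a complex supported on $Z\cap W$ is supported on $Z$), and that the pull-back dg functor $p^\ast$ restricts to a dg functor $\perf_\dg(X)_Z\to\perf_\dg(V)_{Z\cap V}$ which annihilates $\perf_\dg(X)_{Z\cap W}$ up to quasi-isomorphism. Hence, by the description of short exact sequences of dg categories recalled before Definition~\ref{def:localizing} (the Drinfeld quotient $\cA\subset\cB\to\cB/\cA$ of an inclusion is short exact), it suffices to show that $p^\ast$ induces a Morita equivalence $\perf_\dg(X)_Z/\perf_\dg(X)_{Z\cap W}\isoto\perf_\dg(V)_{Z\cap V}$. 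Using Keller's computation of the derived category of a dg quotient together with Theorem~\ref{th:cg} (which identifies $\cD_c(\perf_\dg(V)_{Z\cap V})$ with the already idempotent-complete category $\perf(V)_{Z\cap V}$) and the Neeman--Thomason localization theorem \cite{Neeman1}, this reduces further to the corresponding statement for the ambient compactly generated categories, namely that $p^\ast$ induces an equivalence $\cD_{\mathrm{Qcoh}}(X)_Z/\cD_{\mathrm{Qcoh}}(X)_{Z\cap W}\isoto\cD_{\mathrm{Qcoh}}(V)_{Z\cap V}$. One first checks that the open complements of $Z$ and of $Z\cap W$ in $X$, and of $Z\cap V$ in $V$, are all quasi-compact quasi-separated, so that Theorem~\ref{th:cg} applies and, in particular, $\cD_{\mathrm{Qcoh}}(X)_{Z\cap W}$ is the localizing subcategory of $\cD_{\mathrm{Qcoh}}(X)_Z$ generated by the compact objects $\perf(X)_{Z\cap W}$, which is what Neeman--Thomason needs.

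To prove this last equivalence I would produce a fully faithful right adjoint to $p^\ast\colon\cD_{\mathrm{Qcoh}}(X)_Z\to\cD_{\mathrm{Qcoh}}(V)_{Z\cap V}$, which forces $p^\ast$ to be a Verdier localization with kernel $\ker(p^\ast)$. Since $p$ is a quasi-compact quasi-separated open immersion, $p^\ast\colon\cD_{\mathrm{Qcoh}}(X)\to\cD_{\mathrm{Qcoh}}(V)$ has a right adjoint $\bbR p_\ast$ with $p^\ast\bbR p_\ast\simeq\id$; and $\cD_{\mathrm{Qcoh}}(X)_Z\subset\cD_{\mathrm{Qcoh}}(X)$ is coreflective, with coreflection $\Gamma_Z$ fitting in the local-cohomology triangle $\Gamma_Z\cF\to\cF\to\bbR j_\ast j^\ast\cF$, where $j$ is the open immersion of $X\setminus Z$. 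One then checks that $\Gamma_Z\circ\bbR p_\ast$ takes values in $\cD_{\mathrm{Qcoh}}(X)_Z$ and is right adjoint to the restriction of $p^\ast$, and that its counit $p^\ast\Gamma_Z\bbR p_\ast G\to G$ is invertible for $G$ supported on $Z\cap V$ --- this uses the compatibility $p^\ast\Gamma_Z\simeq\Gamma_{Z\cap V}\,p^\ast$ (flat base change along the open immersion $p$) together with $p^\ast\bbR p_\ast G\simeq G$ and the fact that $\Gamma_{Z\cap V}$ acts as the identity on $\cD_{\mathrm{Qcoh}}(V)_{Z\cap V}$. Finally one identifies $\ker(p^\ast)=\{F\in\cD_{\mathrm{Qcoh}}(X)_Z: F|_V=0\}=\{F:\Supp F\subseteq Z\cap W\}=\cD_{\mathrm{Qcoh}}(X)_{Z\cap W}$, as required.

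Combining the two previous paragraphs finishes the proof: by Neeman--Thomason the compact objects of $\cD_{\mathrm{Qcoh}}(X)_Z/\cD_{\mathrm{Qcoh}}(X)_{Z\cap W}$ form the idempotent completion of the Verdier quotient $\perf(X)_Z/\perf(X)_{Z\cap W}$, and through $p^\ast$ they coincide with $\perf(V)_{Z\cap V}=\cD_c(\perf_\dg(V)_{Z\cap V})$; this yields the desired Morita equivalence and hence the short exact sequence \eqref{eq:SES-support}. I expect the main technical obstacle to be the compatibility of sections-with-support with open restriction (the isomorphism $p^\ast\Gamma_Z\simeq\Gamma_{Z\cap V}\,p^\ast$), together with the attendant verification that $\Gamma_Z\bbR p_\ast$ really does land in $\cD_{\mathrm{Qcoh}}(X)_Z$; once these are in place, the remainder is a formal manipulation of adjunctions and of the Neeman--Thomason and Drinfeld--Keller dg-quotient formalism. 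A secondary, purely bookkeeping, difficulty is keeping track of the quasi-compactness and quasi-separatedness hypotheses needed for Theorem~\ref{th:cg} and for flat base change to be applicable.
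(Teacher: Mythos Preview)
Your proposal is correct and follows the same architecture as the paper's proof: reduce via Keller's characterization and Neeman's localization theorem (together with Theorem~\ref{th:cg}) to the Verdier localization $\cD_{\mathrm{Qcoh}}(X)_Z/\cD_{\mathrm{Qcoh}}(X)_{Z\cap W}\simeq\cD_{\mathrm{Qcoh}}(V)_{Z\cap V}$, and establish the latter using the right adjoint $p_\ast$. The paper's treatment of this last step is slightly more direct than yours: it observes (by base change to $X\setminus Z$) that $p_\ast$ already sends $\cD_{\mathrm{Qcoh}}(V)_{Z\cap V}$ into $\cD_{\mathrm{Qcoh}}(X)_Z$, so the coreflection $\Gamma_Z$ and the compatibility $p^\ast\Gamma_Z\simeq\Gamma_{Z\cap V}\,p^\ast$ that you flagged as the main technical obstacle are in fact unnecessary.
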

\begin{proof}
  As explained by Keller in \cite[Thm.~4.11]{ICM-Keller},
  \eqref{eq:SES-support} is a short exact sequence of dg categories if
  and only if the associated sequence of triangulated categories
\begin{equation}
\label{eq:verdier}
\perf(X)_{Z\cap W}\too \perf(X)_Z
  \stackrel{p^\ast}{\too} \perf(V)_{Z\cap V}
  \end{equation}
is exact in the sense
  of Verdier. We claim that there is a short exact sequence:
\def\Qch{\operatorname{Qcoh}} \def\cone{\operatorname{cone}}
\begin{equation}
\label{eq:verdier2}
0 \too \cD_{\Qch}(X)_{Z\cap W}\too \cD_{\Qch}(X)_Z
  \stackrel{p^\ast}{\too} \cD_{\Qch}(V)_{Z\cap V}\too0 \,.
\end{equation}
As usual, this follows from the following facts: (i) if $\cF \in \cD_{\Qch}(V)_{Z\cap V}$, then $p_\ast(\cF)$ belongs to $\cD_{\Qch}(X)_Z$; (ii) if $\cG\in \cD_{\Qch}(X)_Z$, then $\cone(\cG\rightarrow p_\ast
  p^\ast(\cG))$ belongs to $\cD_{\Qch}(X)_{Z\cap W}$. In what concerns (i), note that by base change the restriction of $p_\ast(\cF)$ to the complement of $Z$ is zero. In what concerns (ii), restrict to $V$. Thanks to Theorem \ref{th:cg}, the   category $\cD_{\Qch}(X)_{Z\cap W}$ is generated by perfect
  complexes. Therefore, by applying Neeman's celebrated result
  \cite[Thm.~2.1]{Neeman2} to \eqref{eq:verdier2}, we conclude that
  \eqref{eq:verdier} is also a short exact sequence of triangulated
  categories.
\end{proof}
The following ``excision'' result will be used in the proof of Theorems~\ref{thm:Nisnevich}~and~\ref{thm:formality}.
\begin{theorem}[{see \cite[Thm 2.6.3]{TT}}]
\label{th:tt}
Let $f:X'\r X$ be a flat morphism of quasi-compact quasi-separated $k$-schemes and let $Z\hookrightarrow X$ be a closed subscheme with
quasi-compact complement such that $Z':=X'\times_X Z\r Z$ is an isomorphism of $k$-schemes. Then, the functors $(f_\ast, f^\ast)$ define inverse equivalences of categories between $\cD(X)_Z$ and $\cD(X')_{Z'}$ and between $\perf(X)_Z$ and $\perf(X')_{Z'}$.
\end{theorem}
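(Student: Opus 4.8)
The plan is to verify directly that the adjoint pair $(Lf^*,Rf_*)$ restricts to the asserted inverse equivalences. Since $f$ is flat (and quasi-compact quasi-separated), $Lf^*=f^*$ is exact; it sends perfect complexes to perfect complexes and, because $\Supp f^*\cF\subseteq f^{-1}(\Supp\cF)\subseteq f^{-1}(Z)=Z'$, it carries $\cD(X)_Z$ into $\cD(X')_{Z'}$ and $\perf(X)_Z$ into $\perf(X')_{Z'}$. For $Rf_*$ one first checks that it carries $\cD(X')_{Z'}$ into $\cD(X)_Z$: writing $j\colon W:=X\setminus Z\hookrightarrow X$ and $j'\colon W':=f^{-1}(W)\hookrightarrow X'$, flat base change along the open immersion $j$ gives $j^*Rf_*\cG\simeq R(f|_{W'})_*\,j'^*\cG=0$ for $\cG\in\cD(X')_{Z'}$, whence $Rf_*\cG$ is supported on $Z$. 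It then remains to show that the unit $\cF\to Rf_*f^*\cF$ is an isomorphism for every $\cF\in\cD(X)_Z$ and that the counit $f^*Rf_*\cG\to\cG$ is an isomorphism for every $\cG\in\cD(X')_{Z'}$; granting this, $f^*$ is fully faithful on $\cD(X)_Z$ with a quasi-inverse $Rf_*$, and the perfect-complex statement then follows because, by Theorem \ref{th:cg}, $\perf(X)_Z$ (resp.\ $\perf(X')_{Z'}$) is exactly the subcategory of compact objects of $\cD_{\mathrm{Qcoh}}(X)_Z$ (resp.\ $\cD_{\mathrm{Qcoh}}(X')_{Z'}$), and equivalences of triangulated categories preserve compactness.

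To prove that the unit and counit are isomorphisms I would reduce to the affine case. Both are morphisms of complexes, hence may be tested after restriction to the members of a finite affine open cover of $X$; since $f^*$ and $Rf_*$ commute with restriction to Zariski opens of $X$ (and to their preimages in $X'$), and since the hypotheses of the theorem are preserved by passing to such opens, this reduces us to $X=\Spec A$ affine. A further Mayer--Vietoris (\v{C}ech) argument on $X'$, using that $X'$ is quasi-compact quasi-separated, reduces us to $X'=\Spec A'$ affine as well. We are then left with a flat ring homomorphism $A\to A'$, a finitely generated ideal $I\subseteq A$ (the complement of $Z$ being quasi-compact) with $Z=V(I)$ and $Z'=V(IA')$, and the hypothesis reading $A/I\isoto A'/IA'$. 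Now $f^*=-\otimes_AA'$ and $Rf_*$ (restriction of scalars along $A\to A'$) are both exact, and the support conditions can be detected on cohomology; so it suffices to prove that $M\mapsto M\otimes_AA'$ and restriction of scalars are mutually inverse equivalences between the category of $I$-power-torsion $A$-modules and that of $IA'$-power-torsion $A'$-modules. (Here we use that, $I$ being finitely generated, a module is supported on $V(I)$ precisely when it is $I$-power-torsion.)

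This last, purely algebraic, statement is straightforward. An $I$-power-torsion $A$-module $M$ is the filtered colimit of the submodules $(0:_MI^n)$, each of which is a module over $A/I^n$; and, tensoring the short exact sequences $0\to I^n/I^{n+1}\to A/I^{n+1}\to A/I^n\to 0$ with the flat $A$-module $A'$ and using $A/I\otimes_AA'\simeq A/I$, one gets $A/I^n\otimes_AA'\simeq A/I^n$ by induction on $n$. Hence $(0:_MI^n)\otimes_AA'\simeq(0:_MI^n)$, and passing to the colimit shows that the canonical map $M\to M\otimes_AA'$ is an isomorphism, \ie that the unit is an isomorphism; the same computation, applied to an $IA'$-power-torsion $A'$-module $N$ and using $A'/I^nA'\simeq A/I^n$, shows that the counit (the multiplication map $N\otimes_AA'\to N$) is an isomorphism. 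Unwinding the reductions yields the statement for $\cD(X)_Z$ versus $\cD(X')_{Z'}$, and hence the perfect-complex statement. The only genuine work here is the Mayer--Vietoris reduction to the affine case---in particular arranging the descent on $X'$ and checking compatibility of $Rf_*$ with it---rather than the affine computation itself; alternatively, one may simply invoke \cite[Thm.~2.6.3]{TT}, of which the present statement is a mild reformulation.
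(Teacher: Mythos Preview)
Your approach differs from the paper's, which is much shorter: it invokes \cite[Thm.~2.6.3]{TT} for the bounded-above case $\cD^-(X)_Z\simeq\cD^-(X')_{Z'}$, extends to the unbounded categories by noting that they are generated under homotopy colimits by the bounded-above ones (and that $f^\ast$, $Rf_\ast$ preserve such colimits), and then restricts to compact objects via Theorem~\ref{th:cg}. Your ``alternatively, cite \cite{TT}'' is thus essentially the paper's route, modulo the passage from bounded-above to unbounded complexes.

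Your self-contained argument has the right algebraic core---flatness together with $A/I\simeq A'/IA'$ forces $A/I^n\simeq A'/I^nA'$ for all $n$---but the step ``a further Mayer--Vietoris argument on $X'$ reduces to $X'$ affine'' is problematic: the hypothesis that $X'\times_X Z\to Z$ be an isomorphism does \emph{not} restrict to open pieces $V'\subset X'$, so one cannot feed $f|_{V'}$ back into the affine case. Fortunately this reduction is unnecessary. Once $X=\Spec A$, your computation already shows that the infinitesimal thickenings $Z'_{(n)}:=V(I^n\cO_{X'})\to Z_{(n)}:=\Spec(A/I^n)$ are isomorphisms of schemes; since any quasi-coherent sheaf on $X$ annihilated by $I^n$ is pushed forward from $Z_{(n)}$ and its pullback along $f$ is pushed forward from $Z'_{(n)}\simeq Z_{(n)}$, the unit and counit are visibly isomorphisms---no cover of $X'$ needed. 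A residual caveat: your module-theoretic argument handles $\cD_{\mathrm{Qcoh}}(X)_Z$ rather than the full $\cD(X)_Z$ (arbitrary $\cO_X$-modules) of the statement; this is harmless for the perfect-complex assertion and for every use made of the theorem in the paper, but does not quite recover the theorem as stated.
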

\begin{proof} 
As proved in {\em loc. cit.}, the functors $(f_\ast, f^\ast)$ define inverse equivalences between $\cD^-(X)_Z$ and $\cD^-(X')_{Z'}$. However, since $\cD(X)_Z$ and $\cD(X')_{Z'}$ admit arbitrary direct sums and are generated by $\cD^-(X)_Z$ and $\cD^-(X')_{Z'}$, respectively, we conclude that the functors $(f_\ast, f^\ast)$ also define inverse equivalences between $\cD(X)_Z$ and $\cD(X')_{Z'}$. By restriction to compact objects, we hence obtain inverse equivalences between $\perf(X)_Z$ and $\perf(X')_{Z'}$; see Theorem \ref{th:cg}.
\end{proof}
The following result will be used in the proof of Generalization (G2).
\begin{lemma} \label{lem:product} Let $X$ and $Y$ be two quasi-compact quasi-separated $k$-schemes. Then, there is a Morita equivalence:
\begin{eqnarray*}
\perf_{\dg}(X)\otimes \perf_{\dg}(Y)\r \perf_{\dg}(X\times Y) && (\cF, \cF')\mapsto \cF\boxtimes \cF'\,.
\end{eqnarray*}
\end{lemma}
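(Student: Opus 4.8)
The plan is to construct the Morita equivalence $\perf_\dg(X) \otimes \perf_\dg(Y) \to \perf_\dg(X \times Y)$ as the dg enhancement of the external tensor product functor and then to verify that it induces an equivalence on derived categories. First I would promote the external tensor product to a genuine dg functor: the dg category $\perf_\dg(X) \otimes \perf_\dg(Y)$ has objects pairs $(\cF, \cF')$ of perfect complexes, and one sends such a pair to $p_X^\ast \cF \Lotimes p_Y^\ast \cF' = \cF \boxtimes \cF'$ on $X \times Y$, which is again perfect; on morphism complexes one uses the natural map $\uRHom_X(\cF,\cG) \otimes_k \uRHom_Y(\cF',\cG') \to \uRHom_{X\times Y}(\cF\boxtimes\cF', \cG\boxtimes\cG')$, taking care to work with a suitable strict dg model (\eg h-injective or semifree resolutions) so that this assembles into an honest dg functor rather than merely a derived one. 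I would cite Keller \cite{ICM-Keller} for the existence of the functorial dg enhancement $\perf_\dg(-)$ and its compatibility with pullback and tensor product, so that this step is essentially bookkeeping.

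Next I would show that the induced functor on (compact objects of) derived categories is an equivalence. The clean way is to pass to the ambient cocomplete categories: tensoring up, one gets a functor $\cD_\Qch(X) \otimes \cD_\Qch(Y) \to \cD_\Qch(X \times Y)$ (derived external tensor), and it is a standard fact — essentially a Künneth/base-change statement for quasi-compact quasi-separated schemes, see Bökstedt–Neeman and \cite{BV}, or \cite[Tag 0AEC]{Stacks} — that $\cD_\Qch(X \times Y)$ is compactly generated by the objects $\cF \boxtimes \cF'$ with $\cF \in \perf(X)$, $\cF' \in \perf(Y)$. Concretely: the compact generators of $\cD_\Qch(X)$ and $\cD_\Qch(Y)$ (which exist by Bondal–Van den Bergh, since $X$ and $Y$ are quasi-compact quasi-separated) have external tensor products that form a compact generating set of $\cD_\Qch(X\times Y)$, and the Hom-computation $\RHom_{X\times Y}(\cF\boxtimes\cF', \cG\boxtimes\cG') \simeq \RHom_X(\cF,\cG) \Lotimes_k \RHom_Y(\cF',\cG')$ holds when $\cF,\cF'$ are perfect (flat base change plus the projection formula). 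This says precisely that the dg functor is fully faithful on the subcategory spanned by the $\cF\boxtimes\cF'$ and that its essential image generates, whence it is a Morita equivalence after idempotent completion.

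The main obstacle is the Hom-complex computation, \ie verifying that the Künneth map $\uRHom_X(\cF,\cG) \otimes^{\Left}_k \uRHom_Y(\cF',\cG') \to \uRHom_{X\times Y}(\cF\boxtimes\cF', \cG\boxtimes\cG')$ is a quasi-isomorphism for $\cF, \cF'$ perfect and $\cG, \cG'$ arbitrary (or both perfect). For $\cF = \cO_X$, $\cF' = \cO_Y$ this is flat base change for $p_X, p_Y$ along each other (using that $X \to \Spec k$, hence $p_Y$, is flat); the general perfect case then follows by the usual dévissage, writing a perfect complex locally as a finite complex of finite free modules and inducting on the length of the complex, using that both sides are exact functors in $\cF$ and $\cF'$ and agree on shifts of the structure sheaf. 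One has to be slightly careful that all schemes are only assumed quasi-compact quasi-separated (not noetherian or separated), so I would invoke the generality of \cite[Thm.~2.6.3 and \S3]{TT} or \cite[Tag 0B24, Tag 0B54]{Stacks} for the base-change and projection-formula statements in this setting rather than reproving them. Once this quasi-isomorphism is in hand, together with the compact-generation input, the conclusion that we have a Morita equivalence is formal.
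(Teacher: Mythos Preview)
Your proposal is correct and follows essentially the same approach as the paper: both arguments use the Bondal--Van den Bergh compact generators $\cG,\cG'$, the fact (from \cite[Lem.~3.4.1]{BV}) that $\cG\boxtimes\cG'$ compactly generates $\cD_{\mathrm{Qcoh}}(X\times Y)$, and a K\"unneth-type identity proved via the projection formula and flat base change. The only organizational difference is that the paper reduces directly to the statement $\mathbf{R}\Gamma(X\times Y,\cF\boxtimes\cF')\simeq\mathbf{R}\Gamma(X,\cF)\otimes_k\mathbf{R}\Gamma(Y,\cF')$ for \emph{arbitrary} complexes $\cF,\cF'$ with quasi-coherent cohomology (after first pulling the internal $\underline{\mathrm{End}}$'s out of the box product using perfectness), which makes the d\'evissage on $\cF,\cF'$ you propose unnecessary.
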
 \def\Qcoh{\mathrm{Qcoh}}
\begin{proof}
Let $\cG$ and $\cG'$ be compact generators of the triangulated categories $\cD_{\Qcoh}(X)$ and $\cD_{\Qcoh}(Y)$, respectively; see Theorem \ref{th:cg}. According to \cite[Lem.~3.4.1]{BV}, $\cG\boxtimes \cG'$ is a compact generator of $\cD_{\Qcoh}(X\times Y)$. Hence, it is sufficient to show that the dg $k$-algebras $\REnd_{X\times Y}(\cG\boxtimes \cG')$ and $\REnd_X(\cG)\otimes\REnd_Y(\cG')$ are quasi-isomorphic. Let $p\colon X \times Y \to X$ and $q\colon X\times Y \to Y$ be the projection maps. Since $p^\ast(\cG)$ and $q^\ast(\cG')$ are perfect complexes, we have
$$ \REnd_{X\times Y}(\cG\boxtimes \cG')={\bf R}\Gamma(X\times Y, {\bf R}\underline{\mathrm{End}}_{X\times Y}(p^\ast(\cG)\otimes^{{\bf L}}_{X\times Y}  q^\ast(\cG')))$$
\begin{eqnarray*}
\bold{R}\underline{\mathrm{End}}_{X\times Y}(p^\ast(\cG)\otimes^{\bf L}_{X\times Y}  q^\ast(\cG')) & \simeq & p^\ast(\bold{R}\underline{\mathrm{End}}_X(\cG)) \otimes^{\bf L}_{X\times Y} q^\ast(\bold{R}\underline{\mathrm{End}}_Y(G')) \\
& \simeq & \bold{R}\underline{\mathrm{End}}_X(\cG)\boxtimes \bold{R}\underline{\mathrm{End}}_Y(\cG')\,.
\end{eqnarray*}
Therefore, it suffices to show that $\bold{R}\Gamma(X\times Y,\cF\boxtimes \cF')\simeq\bold{R}\Gamma(X,\cF)\otimes \bold{R}\Gamma(Y,\cF')$ for any two complexes with quasi-coherent cohomology $\cF$ and $\cF'$. We have
\begin{eqnarray}
\bold{R}\Gamma(X\times Y,\cF\boxtimes \cF')&\simeq & \bold{R}\Gamma(X\times Y,p^\ast(\cF)\otimes^{\bf L}_{X\times Y} q^\ast(\cF')) \nonumber\\
&\simeq &\bold{R}\Gamma(Y,q_\ast p^\ast(\cF)\otimes^{\bf L}_Y(\cF')) \label{eq:number-1}\\
&\simeq & \bold{R}\Gamma(Y,(\bold{R}\Gamma(X,\cF)\otimes_k \cO_Y)\otimes^{\bf L}_Y \cF') \label{eq:number-2}\\
&\simeq & \bold{R}\Gamma(Y,\bold{R}\Gamma(X,\cF)\otimes_k\cF') \nonumber\\
&\simeq & \bold{R}\Gamma(X,\cF)\otimes \bold{R}\Gamma(Y,\cF')\,, \nonumber
\end{eqnarray}
where \eqref{eq:number-1} follows from the projection formula for $q$ and \eqref{eq:number-2} from flat base change for $\bold{R}\Gamma(Y,-)$. This concludes the proof.
\end{proof}
\subsection{Notation}\label{sub:notations}
Let $X$ be a $k$-scheme, $Z\hookrightarrow X$ a closed
subscheme, $\cA$ a dg category, and
$E\colon\dgcat(k)\to \cC$ a functor with values in an arbitrary category. In order to simplify the exposition, we will write \[
E(X;Z;\cA):=E(\perf(X)_Z\otimes \cA)\,.
\]
If $Z=X$ or $\cA=k$, then we will omit the corresponding symbols from the notation.
\section{Nisnevich descent in the supported setting}\label{sec:Nisnevich}
Consider the cartesian square of quasi-compact quasi-separated $k$-schemes
\begin{equation}\label{eq:Nisnevich}
\xymatrix{
V_{12}:=V_1 \times_X V_2 \ar[d] \ar[r] & V_2 \ar[d]^-{p_2} \\
V_1 \ar[r]_-{p_1} & X\,,
}
\end{equation}
where $p_1$ is an open immersion and $p_2$ is an {\'e}tale map
inducing an isomorphism of reduced schemes $p_2^{-1}(X\backslash
V_1)_{\mathrm{red}}\simeq (X\backslash V_1)_{\mathrm{red}}$. As proved
by Morel and Voevodsky in \cite[\S3.1 Prop. 1.4]{MV}, the
\index{Nisnevich topology}Nisnevich topology is generated by the
distinguished squares \eqref{eq:Nisnevich}. The Zariski topology is
generated by those distinguished squares \eqref{eq:Nisnevich} in which
$p_2$ is also an open immersion.

\begin{notation} A sequence of maps $a \to a' \to a'' \to \Sigma a$ in a
    triangulated category $\cT$ is called a \emph{LES-triangle} if it
    becomes a Long Exact Sequence after applying $\Hom_{\cT}(b,-)$,
    for every object $b$ of $\cT$. Distinguished triangles are, of
    course, LES-triangles but the converse is false.
\end{notation}

Let $i\colon Z\hookrightarrow X$ be a closed subscheme with
quasi-compact complement, $Z_1:=Z\cap V_1$, $Z_2:=p_2^{-1}(Z)$, and
$Z_{12}:=Z_1 \times_Z Z_2$. Under these notations, and those of
\S\ref{sub:notations}, we have the following Nisnevich descent result:
\begin{theorem}\label{thm:Nisnevich}
For every localizing invariant $E\colon \dgcat(k) \to \cT$ we have a ``Mayer-Vietoris'' LES-triangle\footnote{If the functor $E$ is suitably enhanced, then \eqref{eq:Nisnevich-1} can be made into an actual distinguished triangle. However, we will
not need this extra assumption/result.}
\begin{equation}
\label{eq:Nisnevich-1}
E(X;Z) \stackrel{\pm}{\to} E(V_1;Z_1)\oplus E(V_2;Z_2) \to E(V_{12};Z_{12}) \stackrel{\delta}{\to} \Sigma E(X;Z)\,.
\end{equation}
\end{theorem}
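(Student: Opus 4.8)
The plan is to reduce the Nisnevich descent statement in the supported setting to two applications of the localization property (C1), by organizing the four dg categories $\perf_\dg(X)_Z$, $\perf_\dg(V_1)_{Z_1}$, $\perf_\dg(V_2)_{Z_2}$, $\perf_\dg(V_{12})_{Z_{12}}$ into a commutative square of short exact sequences. Concretely, apply Proposition \ref{prop:SES} twice: once to the open immersion $p_1\colon V_1\hookrightarrow X$ (with closed complement $W:=X\setminus V_1$ and the closed subscheme $Z$), yielding
\begin{equation*}
0 \to \perf_\dg(X)_{Z\cap W} \to \perf_\dg(X)_Z \xrightarrow{p_1^\ast} \perf_\dg(V_1)_{Z_1} \to 0\,,
\end{equation*}
and once to the open immersion $p_2^{-1}(V_1)=V_{12}\hookrightarrow V_2$ (with closed complement $W_2:=p_2^{-1}(W)$ and the closed subscheme $Z_2=p_2^{-1}(Z)$), yielding
\begin{equation*}
0 \to \perf_\dg(V_2)_{Z_2\cap W_2} \to \perf_\dg(V_2)_{Z_2} \xrightarrow{p_{12}^\ast} \perf_\dg(V_{12})_{Z_{12}} \to 0\,.
\end{equation*}
Here I am using that $Z_2\cap W_2 = p_2^{-1}(Z\cap W)$ and $Z_{12}=Z_1\times_Z Z_2$ as in the hypotheses.

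The crucial input is that the two kernel terms agree: the étale map $p_2$ restricts, over the closed complement $W$, to a map $p_2^{-1}(W)_{\mathrm{red}}\simeq W_{\mathrm{red}}$ by the defining property of a Nisnevich distinguished square \eqref{eq:Nisnevich}. Applying the excision Theorem \ref{th:tt} to the flat morphism $p_2\colon V_2\to X$ with the closed subscheme $Z\cap W\hookrightarrow X$ — whose pullback $Z_2\cap W_2\to Z\cap W$ is an isomorphism since it factors through $p_2^{-1}(W)_{\mathrm{red}}\simeq W_{\mathrm{red}}$ and supports only see the reduced structure — gives an equivalence of dg categories $\perf_\dg(X)_{Z\cap W}\simeq \perf_\dg(V_2)_{Z_2\cap W_2}$ compatible with restriction. (One checks that the isomorphism on reduced schemes already forces $Z\cap W\times_X V_2\to Z\cap W$ to be an isomorphism of schemes, not merely of reduced schemes, because $Z\cap W$ is contained in $W$, where $p_2$ is an isomorphism after reduction, and the condition in Theorem \ref{th:tt} only concerns the scheme-theoretic fiber product.) Thus we obtain a commutative square of short exact sequences of dg categories with the same kernel $\cK:=\perf_\dg(X)_{Z\cap W}$:
\begin{equation*}
\xymatrix{
0 \ar[r] & \cK \ar[r] \ar@{=}[d] & \perf_\dg(X)_Z \ar[r] \ar[d]^-{p_2^\ast} & \perf_\dg(V_1)_{Z_1} \ar[r] \ar[d] & 0\\
0 \ar[r] & \cK \ar[r] & \perf_\dg(V_2)_{Z_2} \ar[r] & \perf_\dg(V_{12})_{Z_{12}} \ar[r] & 0\,.
}
\end{equation*}

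Now I would feed this diagram to the localizing invariant $E$. Applying $E$ to the two rows produces two distinguished triangles in $\cT$ with the common vertex $E(\cK)$, and the functoriality of $E$ on strict morphisms of short exact sequences (part of (C1)) makes the induced map between them a morphism of triangles. Taking the cone on the identity map $E(\cK)\to E(\cK)$ — or equivalently applying the octahedral axiom, or the standard ``diagram chase on the 3x3 square of triangles'' — yields a distinguished triangle
\begin{equation*}
E(X;Z) \to E(V_1;Z_1)\oplus E(V_2;Z_2) \to E(V_{12};Z_{12}) \to \Sigma E(X;Z)\,,
\end{equation*}
with maps of the asserted $\pm$ Mayer–Vietoris shape; since a distinguished triangle is in particular an LES-triangle, this gives \eqref{eq:Nisnevich-1}. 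The one technical point requiring care — and the place I expect the main friction — is verifying the hypothesis of Theorem \ref{th:tt} cleanly, i.e.\ that the étale-locally-trivial behaviour over $W_{\mathrm{red}}$ genuinely produces a scheme-theoretic isomorphism $Z\cap W\times_X V_2\xrightarrow{\ \sim\ }Z\cap W$ (and not just after reduction), so that the kernel identification is strict enough to apply the functoriality clause of (C1); the rest is formal triangulated-category bookkeeping.
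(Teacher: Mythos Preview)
Your overall strategy---the commutative square of short exact sequences via Proposition~\ref{prop:SES}, and the identification of the two kernel terms via the excision Theorem~\ref{th:tt}---is exactly the paper's approach. The concern you flag at the end (whether the isomorphism on reduced schemes suffices for Theorem~\ref{th:tt}) is handled by the fact that $p_2$ is \'etale: an \'etale morphism that induces an isomorphism on underlying reduced schemes is already an isomorphism, so the paper simply invokes Theorem~\ref{th:tt} without further comment.

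The genuine gap is elsewhere, in your final step. You assert that from the morphism of distinguished triangles with an isomorphism on the leftmost vertex one obtains a \emph{distinguished} Mayer--Vietoris triangle with the specified maps, by ``taking the cone on the identity'', the octahedral axiom, or a $3\times 3$ argument. In a bare triangulated category this is not available: the octahedral axiom (equivalently, the strengthened fill-in axiom) lets you \emph{choose} a third vertical map $\gamma$ so that the resulting square is homotopy cartesian, but it does not guarantee that the \emph{given} map $E(V_1;Z_1)\to E(V_{12};Z_{12})$---which is forced on you by the dg functor of restriction---fits into such a distinguished triangle. ``Taking the cone on the identity'' presupposes exactly the kind of enhancement the footnote to the theorem says you do not have. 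This is precisely why the paper only claims an LES-triangle.

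The paper's fix is to bypass the triangulated-category subtlety entirely: apply $\Hom_\cT(b,-)$ to the morphism of distinguished triangles \eqref{eq:MV111}, obtaining two long exact sequences of abelian groups with isomorphic outer terms, and then do an elementary diagram chase to see that the Mayer--Vietoris sequence is exact. This yields the LES-triangle with the correct (given) maps, which is all that is needed downstream for the $5$-lemma argument in Step~III.
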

\begin{proof}
Let us write $W$ for the (reduced) closed complement $(X\backslash V_1)_{\mathrm{red}}$ of $V_1$. Under this notation, we have the following commutative diagram
\begin{equation}\label{eq:perfect1}
\xymatrix{
\perf_\dg(X)_{Z\cap W} \ar[d] \ar[r] & \perf_\dg(X)_Z \ar[d]_-{p_2^\ast} \ar[r]^-{p_1^\ast} & \perf_\dg(V_1)_{Z_1} \ar[d] \\
\perf_\dg(V_2)_{Z_2\cap p_2^{-1}(W)} \ar[r] & \perf_\dg(V_2)_{Z_2} \ar[r] & \perf_\dg(V_{12})_{Z_{12}}
}
\end{equation}
in the homotopy category $\Hmo(k)$. Thanks to Proposition
\ref{prop:SES}, both rows are short exact sequence of dg
categories. Moreover, since $p_2^{-1}(Z\cap W)=Z_2\cap p_2^{-1}(W)$ and $p_2\colon V_2\r X$ is \'etale, the left-hand side
vertical morphism is a Morita equivalence by Theorem \ref{th:tt}. By applying $E$ to
\eqref{eq:perfect1}, we  obtain an induced morphism between
distinguished triangles with invertible outer vertical morphisms
\begin{equation}\label{eq:MV111}
\xymatrix@C=0.9em@R=2em{
E(X;Z \cap W) \ar[d]_-\simeq \ar[r] & E(X;Z) \ar[d]_-{E(p_2^\ast)} \ar[r]^-{E(p_1^\ast)} & E(V_1;Z_1) \ar[d] \ar[r]^-\partial & \Sigma E(X; Z \cap W) \ar[d]_-\simeq \\
E(V_2; Z_2 \cap p_2^{-1}(W))\ar[r] & E(V_2; Z_2) \ar[r] & E(V_{12}; Z_{12}) \ar[r]^-\partial & \Sigma E(V_2; Z_2\cap p_2^{-1}(W))\,.
}
\end{equation}
By applying the homological functor $\Hom_\cT(b,-)$ to the commutative diagram \eqref{eq:MV111}, for every object $b$ of $\cT$, we observe that the middle square forms a ``Mayer-Vietoris'' LES-triangle \eqref{eq:Nisnevich-1}
with  boundary morphism $\delta$ induced by the composition
$ E(V_{12};Z_{12}) \stackrel{\partial}{\to} \Sigma E(V_2;Z_2\cap p_2^{-1}(W)) \simeq \Sigma E(X;Z\cap W)$.
\end{proof}

\begin{remark}[Generalization]\label{rk:Drinfeld}
  Given a dg category $\cA$, Drinfeld proved in
  \cite[Prop.~1.6.3]{Drinfeld} that the functor $-\otimes \cA$
  preserves short exact sequences of dg categories. By tensoring \eqref{eq:perfect1} with $\cA$, we obtain an induced LES-triangle 
\[
E(X;Z;\cA) \stackrel{\pm}{\to} E(V_1;Z_1;\cA)\oplus E(V_2;Z_2;\cA) \to E(V_{12};Z_{12};\cA) \stackrel{\delta}{\to} \Sigma E(X;Z;\cA)\,.
\]
\end{remark}
\section{Proof of Theorem \ref{thm:main}}\label{sec:thm}
Thanks to the work of Thomason-Trobaugh \cite[\S5]{TT} (see also Proposition \ref{prop:SES}), we have the following short exact sequence of dg categories
\begin{equation}\label{eq:ses-classical}
0 \too \perf_\dg(X)_Z \too \perf_\dg(X) \stackrel{j^\ast}{\too} \perf_\dg(U) \too 0\,.
\end{equation}
Consequently, we obtain an induced distinguished triangle 
$$E(X;Z) \too E(X) \stackrel{E(j^\ast)}{\too} E(U) \stackrel{\partial}{\too} \Sigma E(X;Z)\,.$$ 
Since the dg functor $i_\ast\colon \perf_\dg(Z) \to \perf_\dg(X)$ factors through the inclusion $\perf_\dg(X)_Z \subset \perf_\dg(X)$, we have also an induced morphism
\begin{equation}\label{eq:induced}
E(i_\ast)\colon E(Z) \too E(X;Z)\,.
\end{equation}
The proof of Theorem \ref{thm:main} follows now from the following result:
\begin{theorem}[``D\'evissage'']\label{thm:devissage} 
The morphism \eqref{eq:induced} is invertible.
\end{theorem}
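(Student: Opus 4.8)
The plan is to reduce immediately to the affine case by a Zariski (resp.\ Nisnevich) descent argument, and then, in the affine case, to identify $\perf_\dg(X)_Z$ with the derived category of a \emph{formal} dg algebra whose cohomology is the Koszul-type algebra $\End^\ast$ of the structure sheaf of $Z$, so that the $K$-theoretic (or rather $E$-theoretic) content collapses onto $\perf_\dg(Z)$. Concretely, I would first observe that both sides of \eqref{eq:induced} are, as functors of $X$ for fixed $Z$, Zariski (and in the algebraic-space case Nisnevich) sheaves of triangles: for $E(Z)$ this is trivial since $Z$ is fixed, while for $E(X;Z)=E(\perf_\dg(X)_Z)$ this is exactly the Mayer--Vietoris LES-triangle of Theorem~\ref{thm:Nisnevich} (together with Theorem~\ref{th:tt}, which guarantees that on the étale chart $V_2$ one has $E(V_2;Z_2)\simeq E(X;Z\cap W)$-type identifications, making the descent square collapse correctly). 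Since the question is the invertibility of a map of such sheaves, it suffices to check it locally, i.e.\ I may assume $X=\Spec(R)$ is affine and, shrinking further, that the closed immersion $i\colon Z\hookrightarrow X$ is as nice as a local model of a smooth pair allows.

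Second, in the affine smooth situation I would produce a preferred compact generator of $\cD_{\mathrm{Qcoh}}(X)_Z$ — namely $i_\ast\cO_Z$ (this generates by Theorem~\ref{th:cg} since $Z$ is smooth, in particular $i_\ast\cO_Z$ is perfect on $X$, and any object of $\cD_{\mathrm{Qcoh}}(X)_Z$ is built from it because $\cO_Z$ generates $\cD_{\mathrm{Qcoh}}(Z)$ and pushforward along a closed immersion is conservative on supported categories). Thus $\perf_\dg(X)_Z$ is Morita equivalent to the dg algebra $\cE:=\REnd_X(i_\ast\cO_Z)$, while $\perf_\dg(Z)$ is Morita equivalent to $R_Z:=\cO_Z$ (a field-algebra up to shrinking, or at least a commutative regular ring), and the dg functor $i_\ast$ corresponds, under these identifications, to the ring map $R_Z\to \cE$ coming from the $\cO_Z$-module structure on $i_\ast\cO_Z$ (i.e.\ $H^0$ of $\cE$ receives $\cO_Z$, and the higher cohomology is the exterior algebra $\Lambda^\bullet(N^\vee)$ on the conormal bundle by a local Koszul computation). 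The crucial algebraic input — this is the ``formality'' step flagged as Theorem~\ref{thm:formality} in the excerpt — is that $\cE$ is \emph{formal}, i.e.\ quasi-isomorphic to its cohomology $\Lambda^\bullet_{\cO_Z}(N^\vee)$ as a dg algebra; granting this, $\cE$ is a graded algebra which is free (of finite rank) as a module over its degree-zero part $\cO_Z$, concentrated in non-positive cohomological degrees.

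Third, with $\cE\simeq\Lambda^\bullet_{\cO_Z}(N^\vee)$ formal, the map $E(i_\ast)\colon E(\cO_Z)\to E(\cE)$ becomes an instance of the statement that $E$ does not see the passage from a ring to a finite free graded extension of the above Koszul type. Here is where conditions (C1) and (C2) enter jointly: using the $\bbN$-grading on $\cE$ one filters $\cE$ by the ideals generated in positive degree and realizes the quotient map $\cE\to\cO_Z$ (and the splitting $\cO_Z\to\cE$) through a chain of split extensions and suspensions; the localizing property (C1) turns each such square into a triangle, and $\bbA^1$-invariance (C2), in the guise of the Fundamental Theorem recalled in Example~\ref{ex:fundamental} (which itself uses only (C1)+(C2)), kills the suspended pieces exactly as the vanishing $E(i_\ast)=0$ does for the punctured-line inclusion. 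Equivalently — and this is perhaps the cleanest route — one reduces by the Koszul resolution to the case of a single coordinate, i.e.\ to $R=\cO_Z[t]$ with $Z=\{t=0\}$, where $\perf_\dg(X)_Z\simeq\perf_\dg(k[t]\text{-on-}0)$ is Morita equivalent to $\cO_Z$ precisely because the Gysin triangle $E(Z)\to E(X)\to E(X\setminus Z)$ is, in that case, the split triangle of Example~\ref{ex:fundamental}; iterating over the codimension-$c$ normal directions and using Generalization~(G2) to tensor with $\cO_Z$ throughout finishes the argument.

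\textbf{Main obstacle.} The descent reductions and the Koszul bookkeeping are routine given the results already assembled (Theorems~\ref{thm:Nisnevich}, \ref{th:cg}, \ref{th:tt} and Lemma~\ref{lem:product}); the genuine difficulty is the \emph{formality} of $\cE=\REnd_X(i_\ast\cO_Z)$ as a dg algebra over $\cO_Z$ in the smooth affine case. I expect this to rest on exhibiting an explicit $\bbZ$- (or $\bbG_m$-) action or a weight grading on a model of $\cE$ — e.g.\ via the Rees construction on the $\mathfrak{m}_Z$-adic filtration, using smoothness to see that the associated graded is the symmetric/exterior algebra on $N^\vee$ — and checking that the Massey products vanish, which in turn forces care with the (non-)uniqueness of the dg enhancement and with the non-affine gluing. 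Once formality is in hand, everything downstream is a formal consequence of (C1)+(C2) via the Fundamental Theorem.
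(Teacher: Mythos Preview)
Your overall architecture---reduce to the affine case by Zariski/Nisnevich descent (via Theorem~\ref{thm:Nisnevich} and the induction principle of Proposition~\ref{prop:reduction}), then in the affine case identify $\perf_\dg(X)_Z$ with perfect modules over the formal dg algebra $\cE=\REnd_X(i_\ast\cO_Z)\simeq \bigwedge^\bullet_{\cO_Z}(N^\vee)$, and finally use (C1)+(C2) to collapse $E(\cE)$ onto $E(\cO_Z)$---is exactly the paper's three-step proof. You also correctly isolate formality as the substantive input, and your guess about its proof (complete along $Z$ and compare with the symmetric algebra on the conormal via a tubular-neighborhood/Rees-type statement, then Koszul duality) is precisely what the paper does in Propositions~\ref{prop:tubular} and~\ref{lem:koszul}.

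Where you diverge is your ``third'' step, which is over-engineered and in places shaky. The paper does \emph{not} filter $\cE$, invoke the Fundamental Theorem, or iterate over normal directions. Instead it uses a one-line algebraic homotopy (Lemma~\ref{lem:grading}): for any $\bbN_0$-graded dg category $\cA$ with degree-zero part $\cA_0$, the dg functor $H\colon \cA\to\cA[t]$, $f\mapsto f\otimes t^n$ on pure degree $n$, satisfies $\mathrm{ev}_0\circ H=\iota_0\circ\pi$ and $\mathrm{ev}_1\circ H=\id$; since $E$ inverts $\cA\to\cA[t]$ by (C2), one gets $E(\iota_0\circ\pi)=\id$, hence $E(\iota_0)\colon E(\cA_0)\to E(\cA)$ is invertible. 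Applied to $\cA=H^\ast(\cE)=\bigwedge^\bullet_{\cO_Z}(N^\vee)$ with its natural $\bbN_0$-grading, this gives $E(\cO_Z)\simeq E(\cE)$ immediately. No filtrations, no split triangles, no Fundamental Theorem are needed; (C1) does not enter this step at all.

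Two specific cautions about your sketch. First, your Route~B statement that ``$\perf_\dg(k[t]\text{-on-}0)$ is Morita equivalent to $\cO_Z$'' is false as written---they are only $E$-equivalent, which is what is being proved---and the justification you give (``precisely because the Gysin triangle \ldots is the split triangle of Example~\ref{ex:fundamental}'') is circular, since that example is deduced \emph{from} Theorem~\ref{thm:main}, which rests on the very d\'evissage you are proving. Second, your Route~A appeal to the Fundamental Theorem to ``kill suspended pieces'' coming from a degree filtration of $\bigwedge^\bullet$ is not a recognizable argument: the Fundamental Theorem compares $E(\cA)$ with $E(\cA[t,t^{-1}])$, and no such Laurent-polynomial structure appears in the successive quotients of $\bigwedge^\bullet$. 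Replace this entire step by the $\bbN_0$-grading homotopy above and the proof goes through cleanly.
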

The proof of Theorem \ref{thm:devissage} is divided into three main steps:
\begin{itemize}
\item[(i)] Firstly, we describe the behavior of an $\bbA^1$-homotopy invariant with respect to $\bbN_0$-graded dg categories;
\item[(ii)] Secondly, by combining the first step with a formality result of independent interest (see Theorem \ref{thm:formality}), we prove Theorem \ref{thm:devissage} in the affine case;
\item[(iii)] Thirdly, using the Nisnevich descent results established in \S\ref{sec:Nisnevich}, we bootstrap the proof of Theorem \ref{thm:devissage} from the affine case to the general case.
\end{itemize}
\subsection*{Step I: Gradings}
A dg category $\cA$ is called {\em $\bbN_0$-graded} if the (cochain) complexes of $k$-vector spaces $\cA(x,y)$ are equipped with a direct sum decomposition $\bigoplus_{n\geq 0} \cA(x,y)_n$ which is preserved by the composition law. The elements of $\cA(x,y)_n$ are called of {\em pure} degree $n$. Let $\cA_0$ be the dg category with the same objects as $\cA$ and $\cA_0(x,y):=\cA(x,y)_0$. Note that we have an ``inclusion'' dg functor $\iota_0\colon \cA_0 \to \cA$ and a ``projection'' dg functor $\pi\colon \cA \to \cA_0$ such that $\pi \circ \iota_0 =\id$.
\begin{remark}\label{rk:pure}
Let $\cA$ be a dg category whose (cochain) complexes of $k$-vector spaces $\cA(x,y)$ have zero differential and are supported in non-positive degrees. In this case, the dg category $\cA$ becomes $\bbN_0$-graded: an element of $\cA(x,y)$ is of pure degree $n$ if it is of cohomological degree $-n$.
\end{remark}
\begin{remark}\label{rk:tensor}
The tensor product of a $\bbN_0$-graded dg category $\cA$ with a dg category $\cB$ is again a $\bbN_0$-graded dg category with $(\cA\otimes \cB)((x,w),(y,z))_n:=\cA(x,y)_n\otimes \cB(w,z)$.
\end{remark}
The following result is classical.
\begin{lemma}[see \cite{Hoobler}]\label{lem:grading} 
For every $\bbA^1$-homotopy invariant $E\colon\dgcat(k) \to \cT$ and $\bbN_0$-graded dg category $\cA$, we have an associated isomorphism $E(\iota_0)\colon E(\cA_0)\to E(\cA)$.
\end{lemma}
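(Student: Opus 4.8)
The plan is to reduce the statement about a general $\bbN_0$-graded dg category to the affine case, and there to exhibit an explicit $\bbA^1$-homotopy between the identity and the composite $\iota_0\circ\pi$. First I would observe that the ``projection'' dg functor $\pi\colon\cA\to\cA_0$ and the ``inclusion'' $\iota_0\colon\cA_0\to\cA$ satisfy $\pi\circ\iota_0=\id_{\cA_0}$, so $E(\pi)\circ E(\iota_0)=\id$; thus it suffices to show that $E(\iota_0)\circ E(\pi)=E(\iota_0\circ\pi)$ equals $\id_{E(\cA)}$, which gives that $E(\iota_0)$ is invertible with inverse $E(\pi)$.

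The key construction is the following. For a scalar $t\in k$, the grading on $\cA$ yields a dg functor $\phi_t\colon\cA\to\cA$ that is the identity on objects and multiplies a pure-degree-$n$ morphism by $t^n$; one checks this respects composition since the grading is preserved by the composition law. Varying $t$ we want to package these into a single dg functor $\Phi\colon\cA\to\cA[t]=\cA\otimes k[t]$ sending a pure-degree-$n$ morphism $f$ to $t^n\cdot f$. Using Remark \ref{rk:tensor}, $\cA[t]$ is again $\bbN_0$-graded, and one verifies directly that $\Phi$ is a well-defined dg functor: it is compatible with the differentials (which preserve pure degree) and with composition. Then the two evaluation dg functors $\mathrm{ev}_1,\mathrm{ev}_0\colon\cA[t]\to\cA$ (setting $t=1$, resp. $t=0$) satisfy $\mathrm{ev}_1\circ\Phi=\id_\cA$ and $\mathrm{ev}_0\circ\Phi=\iota_0\circ\pi$. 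Since $E$ is an $\bbA^1$-homotopy invariant, the canonical functor $\cA\to\cA[t]$ is inverted by $E$; as both evaluation functors $\mathrm{ev}_0,\mathrm{ev}_1$ are one-sided inverses of this canonical functor (they split it), we get $E(\mathrm{ev}_0)=E(\mathrm{ev}_1)=E(\cA\to\cA[t])^{-1}$ in $\cT$. Applying $E$ to the two factorizations of $\mathrm{ev}_0\circ\Phi$ and $\mathrm{ev}_1\circ\Phi$ yields
\[
E(\iota_0)\circ E(\pi)=E(\mathrm{ev}_0)\circ E(\Phi)=E(\mathrm{ev}_1)\circ E(\Phi)=E(\mathrm{ev}_1\circ\Phi)=E(\id_\cA)=\id_{E(\cA)}\,,
\]
which together with $E(\pi)\circ E(\iota_0)=\id_{E(\cA_0)}$ shows $E(\iota_0)$ is an isomorphism.

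The main point requiring care — though it is not deep — is the verification that $\Phi\colon\cA\to\cA[t]$ is a genuine dg functor: one must check that for homogeneous morphisms $f,g$ of pure degrees $m,n$ one has $\Phi(g\circ f)=t^{m+n}(g\circ f)=(t^n g)\circ(t^m f)=\Phi(g)\circ\Phi(f)$, which uses precisely that the composition law preserves the grading, and that $\Phi$ commutes with the internal differential, which holds because the differential on $\cA(x,y)$ preserves each graded piece $\cA(x,y)_n$ (the grading is a grading of complexes). One also needs that the maps $\mathrm{ev}_0$ and $\mathrm{ev}_1$ are both sections up to the canonical inclusion in $\Hmo(k)$, i.e. that $\mathrm{ev}_0$ and $\mathrm{ev}_1$ compose with $\cA\to\cA[t]$ to the identity — this is immediate since $\cA\to\cA[t]$ is the base change along $k\to k[t]$ and $\mathrm{ev}_i$ is base change along $k[t]\to k$, $t\mapsto i$. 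Since the canonical functor is inverted by $E$ and admits a section on one side, that section is also inverted and equals the inverse; this is a formal consequence of $E$ factoring through $\Hmo(k)$. No further input beyond the hypothesis (C2) and the formal properties of $\Hmo(k)$ recalled in \S\ref{sub:dg} is needed.
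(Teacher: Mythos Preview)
Your proof is correct and follows essentially the same approach as the paper: the functor you call $\Phi$ is exactly the paper's homotopy $H\colon\cA\to\cA[t]$, $f\mapsto f\otimes t^n$, and the argument via $\mathrm{ev}_0\circ\Phi=\iota_0\circ\pi$, $\mathrm{ev}_1\circ\Phi=\id$, together with $E(\mathrm{ev}_0)=E(\mathrm{ev}_1)$, is identical. Your opening phrase about ``reducing to the affine case'' is a bit misleading---there is no reduction step, the homotopy is written down directly for arbitrary $\cA$---but the argument that follows is fine and in fact spells out more of the routine verifications than the paper does.
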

\begin{proof}
Since $\pi\circ \iota_0=\id$, it suffices to show that $E(\iota_0\circ \pi)=\id$. Note that we have canonical dg functors 
$\iota \colon \cA \to \cA[t]$ and $\mathrm{ev}_0, \mathrm{ev}_1: \cA[t] \to \cA$ verifying the equalities $\mathrm{ev}_0 \circ \iota = \mathrm{ev}_1 \circ \iota = \id$. Consider the following commutative diagram
\begin{equation}\label{eq:triangle-diagram}
\xymatrix{
&& \cA \\
\cA \ar@/_1.0pc/@{=}[drr] \ar[rr]^-H  \ar@/^1.0pc/[urr]^-{\iota_0 \circ \pi} && \cA[t] \ar[u]_-{\mathrm{ev}_0} \ar[d]^-{\mathrm{ev}_1} \\
&& \cA\,,
}
\end{equation}
where $H$ is the dg functor defined by $x \mapsto x$ and $\cA(x,y)_n \to \cA(x,y)[t], f\mapsto f\otimes t^n$. Since the functor~$E$ inverts the morphism $\iota$, it also inverts the morphisms $\mathrm{ev}_0$ and $\mathrm{ev}_1$. Moreover, $E(\mathrm{ev}_0)=E(\mathrm{ev}_1)$. By applying the functor $E$ to \eqref{eq:triangle-diagram}, we hence conclude that $E(\iota_0 \circ \pi) = \id$. 
\end{proof}
\subsection*{Step II: Affine case}
Our main result in the affine case is the following:
\begin{theorem}[Formality]\label{thm:formality}  Let $Z\hookrightarrow X$ a closed immersion of smooth affine $k$-schemes and $\cI \subset \cO_X$ the defining ideal of $Z$ in
  $X$. Then, the following holds:
\begin{itemize}
\item[(i)] The sheaf $\cO_X/\cI \in \perf(X)_Z$ is a compact generator of $\cD_{\mathrm{Qcoh}}(X)_Z$. Consequently, the dg category $\perf_\dg(X)_Z$ is Morita equivalent to $\perf_\dg(A)$, where $A$ stands for the derived dg algebra of endomorphisms ${\bf R}\mathrm{End}_X(\cO_X/\cI)$.
\item[(ii)] The dg algebra $A$ is formal.
\item[(iii)] We have an isomorphism $H^{\ast}(A) \simeq \Gamma(Z, \bigwedge^\ast_Z((\cI/\cI^2)^\vee))$, where $\cI/\cI^2$ is considered as a vector bundle on $Z$.
\end{itemize}
\end{theorem}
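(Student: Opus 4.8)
The plan is to treat the three parts in order, since (i) sets up the algebra $A$, (ii) is the crux, and (iii) is a bookkeeping consequence. For part (i), I would invoke Theorem~\ref{th:cg}: since $X$ is affine (hence $\cD_{\mathrm{Qcoh}}(X)_Z$ is compactly generated with compact objects $\perf(X)_Z$), it suffices to show $\cO_X/\cI$ generates. Because $Z\hookrightarrow X$ is a regular immersion of smooth affine schemes, $\cO_X/\cI$ is a perfect complex supported on $Z$ (resolved by the Koszul complex on a regular sequence generating $\cI$ locally, glued over the affine $X$). Generation follows because any nonzero object of $\cD_{\mathrm{Qcoh}}(X)_Z$ has a closed point of $Z$ in its support, and localizing/completing at such a point reduces to the local statement that the residue field generates the category of complexes supported at the maximal ideal over a regular local ring — a standard Koszul-duality fact. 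Then Keller's dg Morita theory \cite[\S4.6]{ICM-Keller} gives $\perf_\dg(X)_Z \simeq \perf_\dg(A)$ with $A := \mathbf{R}\mathrm{End}_X(\cO_X/\cI)$.

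Part (ii), the formality of $A$, is the main obstacle. The cohomology $H^\ast(A) = \mathrm{Ext}^\ast_{\cO_X}(\cO_X/\cI,\cO_X/\cI)$ is computed by the Koszul resolution and equals $\bigwedge^\ast_Z(\cI/\cI^2)^\vee$ (this is essentially part (iii)), a graded-commutative algebra concentrated in cohomological degrees that match the grading. I would establish formality by exhibiting an explicit quasi-isomorphism, or by a grading/weight argument: the Koszul complex carries an internal weight grading (the Koszul generators have weight $1$), under which $A$ becomes a dg algebra whose $n$-th cohomology sits in internal weight $n$; an $A_\infty$-minimal model then has higher products $m_k$ ($k\geq 3$) which must have internal weight $k-2 \geq 1$ but cohomological degree $2-k$, forcing them to land in the wrong-weight part of $H^\ast(A)$ — hence they vanish, by an argument of ``intrinsic formality'' type (cf. the use of such weight arguments for Koszul algebras). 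Concretely, when $X = \Spec R$ with $\cI = (f_1,\dots,f_c)$ a regular sequence, $A$ is quasi-isomorphic to the Koszul dg algebra $R\langle \xi_1,\dots,\xi_c\rangle$ with $|\xi_i| = -1$, $d\xi_i = f_i$; taking the homotopy category identifies this with its cohomology $\bigwedge^\ast_{R/\cI}(R/\cI)^c$ placed in the corresponding degrees, and the identification is multiplicative because the Koszul differential is a derivation. A Zariski-gluing argument (the statement being local on the affine $X$, and $A$ being defined globally via $\mathbf{R}\Gamma$) then upgrades the local formality to the global one; alternatively one notes that since $X$ is affine the global $A$ is already computed by a single Koszul-type complex after choosing generators of $\cI$ (which exist globally on an affine scheme, though possibly not forming a regular sequence globally — here one uses that $Z\hookrightarrow X$ is a local complete intersection and that $\cI/\cI^2$ is locally free of rank $c = \mathrm{codim}$, so locally on $Z$ a regular sequence exists, and the derivation/weight argument is compatible with this localization).

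Part (iii) then follows by direct computation: using the Koszul resolution $\bigwedge^\bullet_{\cO_X}(\cO_X^c) \xrightarrow{\sim} \cO_X/\cI$ (locally), one has $\mathbf{R}\mathcal{H}om_X(\cO_X/\cI,\cO_X/\cI) \simeq \bigwedge^\bullet_{\cO_X/\cI}((\cI/\cI^2)^\vee)$ via the standard identification of self-Ext of a Koszul complex with the exterior algebra on the conormal-dual, where the vector-bundle structure on $\cI/\cI^2$ over $Z$ comes from the regularity of the immersion; taking $\mathbf{R}\Gamma$ over the affine $Z$ (so higher cohomology vanishes) yields $H^\ast(A) \simeq \Gamma(Z, \bigwedge^\ast_Z((\cI/\cI^2)^\vee))$. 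I would present (iii) first as a lemma feeding into (ii), and remark that the grading by which formality is detected is exactly the exterior-power grading appearing here.
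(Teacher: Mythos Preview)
Your treatment of (i) and (iii) is fine and close to the paper's. The gap is in (ii).

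First, the concrete claim that ``$A$ is quasi-isomorphic to the Koszul dg algebra $R\langle\xi_1,\dots,\xi_c\rangle$ with $d\xi_i=f_i$'' is wrong: that cdga resolves $R/I$, not $\REnd_R(R/I)$; its cohomology is $R/I$ in degree $0$, whereas $H^\ast(A)=\bigwedge^\ast(I/I^2)^\vee$. More importantly, your weight argument needs an internal grading on $A$ itself (preserved by differential and product) that induces the exterior grading on $H^\ast(A)$, so that the transferred $A_\infty$-operations $m_n$ are forced to vanish by weight reasons. No such grading exists in general: if you give the Koszul generators $\xi_i$ weight $1$, the differential $d\xi_i=f_i$ lands in weight $0$, so the Koszul complex and $\End_R(K)$ are only filtered, not graded, and the minimal-model argument does not go through. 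Your fallback---proving formality Zariski-locally and gluing---is also problematic, since formality of a dg algebra is not a local property: local quasi-isomorphisms to cohomology need not assemble into a global one.

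The paper supplies precisely the missing grading by a reduction step you omit. Writing $X=\Spec R$, $Z=\Spec S$, $I=\ker(R\to S)$, and $T=\Sym_S(I/I^2)$, it uses an affine tubular-neighborhood isomorphism $\widehat{R}\simeq\widehat{T}$ (Proposition~\ref{prop:tubular}) together with two applications of excision (Theorem~\ref{th:tt}) to get equivalences
\[
\cD(R)_I\;\simeq\;\cD(\widehat{R})_{\widehat{I}}\;\simeq\;\cD(\widehat{T})_{\widehat{T}_{\ge 1}}\;\simeq\;\cD(T)_{T_{\ge 1}}
\]
carrying $R/I$ to $S$. Over the genuinely $\bbN_0$-graded ring $T$ one then writes down an explicit dg-algebra map $\Sym_S\Sigma^{-1}(P^\vee)\to\End_T(\mathbf{K})$ via cap product on the Koszul resolution (Proposition~\ref{lem:koszul}), which is the formality quasi-isomorphism. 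Your intrinsic-formality heuristic is morally correct, but it only becomes rigorous \emph{after} this passage to the normal-bundle model, which is where the weight grading actually lives.
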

\begin{proof}
(i) Given an object $\cF \in \cD_{\mathrm{Qcoh}}(X)_Z$, we need to show the implication:
\begin{eqnarray}\label{eq:implication}
\Hom_{\cD_{\mathrm{Qcoh}}(X)_Z}(\cO_X/\cI, \Sigma^n \cF)=0\quad\forall n \in \bbZ & \Rightarrow & \cF=0\,.
\end{eqnarray}
Since $X$ is affine, the left-hand side of \eqref{eq:implication} is
equivalent to $\underline{\Hom}_X(\cO_X/\cI, \cF)=0$. As proved by Grothendieck in
\cite[Prop.~(19.1.1)]{EGA}, the ideal $\cI$ is locally generated by a
regular sequence. Given an (affine) open subscheme $U \hookrightarrow
X$ such that $Z \cap U =V(f_1, \ldots, f_d)$ for a regular sequence
$f_1, \ldots, f_d \in \Gamma(U,\cO_U)$, the object $(\cO_X/\cI)_{|U}:=
\bigotimes_i \mathrm{cone}(f_i\colon \cO_U\to\cO_U)$ is well-known to
be a compact generator of $\cD_{\mathrm{Qcoh}}(U)_{Z\cap U}$; see \cite[Prop.~6.1]{Neeman}. Using the
natural identification between $\underline{\Hom}_X((\cO_X/\cI)_{|U},
\cF_{|U})$ and $\underline{\Hom}_X(\cO_X/\cI,\cF)_{|U}$, we 
conclude that $\cF_{|U}=0$. The proof follows now from the fact that
$X$ admits a covering by such (affine) open subschemes $U$'s.

(ii) It is now be convenient to switch to ring theoretical notation: let $X=\mathrm{Spec}(R)$,  $Z=\mathrm{Spec}(S)$, and  $\phi\colon R \twoheadrightarrow S$ the surjective $k$-algebra homomorphism corresponding to the closed immersion $Z \hookrightarrow X$.
Let us write $I$ for the kernel of $\phi$, $\widehat{R}$ for the completion of $R$ at $I$, $T$ for the graded symmetric algebra $\mathrm{Sym}_S(I/I^2)$, and $\widehat{T}$ for the completion of $T$ at the augmentation ideal $T_{>0}$. 
Thanks to Lemma \ref{lem:tubular} below, $I/I^2$ is a finitely generated projective $S=R/I$-module and
there exists an isomorphism $\widehat{T} \stackrel{\simeq}{\to} \widehat{R}$ compatible with $\phi$ and with the projection $T\rightarrow T_0=S$.

Let us write $\cD(R)$ for the derived category of $R$ and $\cD(R)_I$
for the full triangulated subcategory of those complexes of
$R$-modules whose cohomology is locally annihilated by a
power of $I$. Note that $\cD(R)_I$ is equivalent to
$\cD_{\mathrm{Qcoh}}(X)_Z$. We have the following equivalence of categories
\begin{equation}\label{eq:equivalences}
\cD_{\mathrm{Qcoh}}(X)_Z \simeq \cD(R)_I \stackrel{(a)}{\simeq} \cD(\widehat{R})_{\widehat{I}} \stackrel{(b)}{\simeq} \cD(\widehat{T})_{\widehat{T}_{\geq 1}} \stackrel{(c)}{\simeq} \cD(T)_{T_{\geq 1}}\,,
\end{equation}
where (a) and (c) follow from Theorem \ref{th:tt} and (b) from Proposition
\ref{prop:tubular}. Via the equivalences \eqref{eq:equivalences},
the sheaf $\cO_X/I \in \cD_{\mathrm{Qcoh}}(X)_Z$ corresponds to the
$T$-module $S \in \cD(T)_{T_{\geq 1}}$. Consequently, the dg algebra
$A$ becomes quasi-equivalent to the derived dg algebra of
endomorphisms $A':={\bf R}\mathrm{End}_T(S)$. The formality of $A'$ 
follows now from Proposition \ref{lem:koszul} below.

(iii) The proof follows from Proposition \ref{lem:koszul} below.
\end{proof}
\begin{proposition}[Affine tubular neighborhoods]\label{prop:tubular}
\label{lem:tubular} Let $\phi:R\twoheadrightarrow S$ be a surjective morphism between smooth $k$-algebras, with kernel $I$.
 Then, the $S=R/I$-module $I/I^2$ is finitely generated projective. Let $\widehat{R}$ be the $I$-adic completion of $R$ at $I$, $T$ the graded symmetric algebra $\mathrm{Sym}_S(I/I^2)$ (with $T_n:=\mathrm{Sym}^n_S(I/I^2)$), and $\widehat{T}$ the completion of $T$ at the ideal $T_{\geq 1}$. Under these notations, there exists an isomorphism $\tau\colon \widehat{T} \stackrel{\simeq}{\to} \widehat{R}$ such that $\widehat{\phi}\circ \tau$ agrees with the projection onto $T_0=S$.
\end{proposition}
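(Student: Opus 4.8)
The plan is to build $\tau$ in two stages --- first a section of $\widehat\phi$ in degree zero, then a compatible lift of the conormal module --- and then to check that $\tau$ is an isomorphism by passing to associated graded rings. As a preliminary, note that $I/I^2$ is finitely generated projective over $S$: since $S$ is smooth over $k$, the second fundamental exact sequence of the closed immersion $\Spec(S)\hookrightarrow\Spec(R)$ is left exact and locally split, i.e.\ $0\to I/I^2\to\Omega_{R/k}\otimes_R S\to\Omega_{S/k}\to 0$ is exact with $\Omega_{S/k}$ projective, so $I/I^2$ is a direct summand of the finitely generated projective $S$-module $\Omega_{R/k}\otimes_R S$. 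In particular $T=\mathrm{Sym}_S(I/I^2)$ is a graded $S$-algebra, finitely generated projective over $S$ in each degree.

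First I would construct a $k$-algebra section $\sigma\colon S\to\widehat R$ of the projection $\widehat\phi\colon\widehat R\to\widehat R/\widehat I=S$. This uses formal smoothness of $S$ over $k$: for $n\ge 1$ the surjection $\widehat R/\widehat I^{\,n+1}\to\widehat R/\widehat I^{\,n}$ has square-zero kernel (as $\widehat I^{\,n}\cdot\widehat I^{\,n}\subseteq\widehat I^{\,n+1}$), so one lifts the identity $S=\widehat R/\widehat I\to\widehat R/\widehat I$ successively up the tower and passes to the limit, which is legitimate because $\widehat R$ is $\widehat I$-adically complete and separated. Regard $\widehat R$, and hence the ideals $\widehat I\supseteq\widehat I^{\,2}$, as $S$-modules via $\sigma$. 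Since $\widehat I/\widehat I^{\,2}\cong I/I^2$ is $S$-projective, the short exact sequence of $S$-modules $0\to\widehat I^{\,2}\to\widehat I\to\widehat I/\widehat I^{\,2}\to 0$ splits; pick a section $j_1\colon I/I^2\cong\widehat I/\widehat I^{\,2}\to\widehat I\subseteq\widehat R$. By the universal property of the symmetric algebra, the pair $(\sigma,j_1)$ extends uniquely to an $S$-algebra homomorphism $T\to\widehat R$; as $j_1$ lands in $\widehat I$, this map carries $T_{\ge n}$ into $\widehat I^{\,n}$, hence is continuous and induces the desired $\tau\colon\widehat T\to\widehat R$ with $\tau(\widehat T_{\ge n})\subseteq\widehat I^{\,n}$.

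To see that $\tau$ is an isomorphism I would argue on associated graded rings: $\tau$ is a filtered morphism of complete separated filtered rings, hence an isomorphism as soon as $\gr(\tau)$ is, by the standard graded-lifting lemma. Now $\gr(T)$ for the $T_{\ge 1}$-adic filtration is tautologically $T=\mathrm{Sym}_S(I/I^2)$, while $\gr_{\widehat I}(\widehat R)\cong\gr_I(R)$; and since a closed immersion of smooth $k$-schemes is a regular immersion --- $I$ being locally generated by a regular sequence, as in \cite[Prop.~(19.1.1)]{EGA} --- the canonical surjection $\mathrm{Sym}_S(I/I^2)\to\gr_I(R)$ is an isomorphism. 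Under these identifications $\gr(\tau)$ is an $S$-algebra endomorphism of $\mathrm{Sym}_S(I/I^2)$ which in degree one is the composite of $j_1$ with the projection $\widehat I\to\widehat I/\widehat I^{\,2}$, i.e.\ the identity; being the identity on generators it equals $\id$, so $\tau$ is an isomorphism.

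It remains to record compatibility: $\tau|_{T_0}=\sigma$, so $\widehat\phi\circ\tau$ is the identity on $T_0=S$, while $\tau(\widehat T_{\ge 1})\subseteq\widehat I=\ker\widehat\phi$, so $\widehat\phi\circ\tau$ kills $\widehat T_{\ge 1}$; hence $\widehat\phi\circ\tau$ is exactly the projection $\widehat T\to T_0=S$, as required. The two places where genuine input is needed, rather than bookkeeping, are the existence of the section $\sigma$ (formal smoothness of $S/k$ together with completeness of $\widehat R$) and the identification $\gr_I(R)\cong\mathrm{Sym}_S(I/I^2)$ coming from regularity of the immersion $Z\hookrightarrow X$; granting these, the filtration argument is routine.
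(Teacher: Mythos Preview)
Your proof is correct and follows the standard route to the formal tubular neighborhood theorem: use formal smoothness of $S$ over $k$ to split $\widehat\phi$, lift the conormal module by projectivity, extend by the universal property of $\mathrm{Sym}$, and conclude by comparing associated gradeds via the regularity of the immersion. This is precisely the content of the EGA reference the paper invokes (applied with $A=k$, $B=\widehat R$, $C=S$), so the approaches coincide; you have simply unpacked the citation rather than appealing to it.
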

\begin{proof} 
Since the $k$-algebras $R$ and $S$ are formally smooth (see \cite[Prop.~E.2]{Loday}), the proof follows from \cite[Cor.\ 0.19.5.4]{EGA} applied to  $A:=k$, $B:=\widehat{R}$, and $C:=S$. The property that $\widehat{\phi}\circ \tau$ agrees
  with the projection onto $T_0=S$ is not explicitly stated in {\em loc. cit.}, but it follows immediately from the proof.
\end{proof}
\begin{proposition}[Koszul duality] \label{lem:koszul}
Let $S$ be a commutative ring, $P$ a finitely generated projective $S$-module, and $T:=\Sym_S(P)$.
Then, the dg algebra $A:=\REnd_T(S)$ is formal and its cohomology as a graded algebra is given by $\bigwedge^\ast_S P^\vee$.
\end{proposition}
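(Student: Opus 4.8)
The plan is to realise $\REnd_T(S)$ concretely via the Koszul resolution of $S$ over $T$, and then deduce formality from a ``purity'' argument on the cohomology. First I would introduce the Koszul complex $K=(T\otimes_S\bigwedge^{\bullet}_S P,\,d)$, with $K^{-i}=T\otimes_S\bigwedge^{i}_S P$ for $i\ge 0$ and the standard Koszul differential. Since $P$ is finitely generated projective, $K$ is a bounded complex of finitely generated projective $T$-modules ($\bigwedge^{i}_S P=0$ once $i$ exceeds the rank of $P$, which is bounded as $\Spec S$ is quasi-compact). Localising at the primes of $S$, where $P$ becomes free and $K$ is the usual Koszul complex of a polynomial ring, shows that $K\to S=T/T_{\ge 1}$ is a quasi-isomorphism; hence $K$ is an h-projective resolution of $S$ over $T$, and $\REnd_T(S)$ is modelled, as a dg algebra, by the endomorphism dg algebra $\End_T(K)$ under composition.

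Next I would exploit the extra grading. The grading $T=\bigoplus_{j\ge 0}\Sym^{j}_S P$ induces a grading on $K$ in which $\bigwedge^{i}_S P\subset K^{-i}$ has weight $i$, and the Koszul differential is weight $0$; therefore $\End_T(K)=\bigoplus_{w}\End_T(K)_{(w)}$ is a dg algebra carrying a compatible extra $\mathbb Z$-grading for which $[d,-]$ has weight $0$. Composing with the (weight $0$) augmentation $K\to S$ gives a weight-preserving quasi-isomorphism $\End_T(K)=\Hom_T(K,K)\xrightarrow{\sim}\Hom_T(K,S)$. Now $\Hom_T(K^{-i},S)=\Hom_S(\bigwedge^{i}_S P,S)$ is pure of weight $-i$, and the differential on $\Hom_T(K,S)$ is precomposition with the weight-$0$ map $d$, hence must preserve the weight and therefore vanishes. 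Thus $\operatorname{Ext}^{n}_T(S,S)\cong(\bigwedge^{n}_S P)^{\vee}\cong\bigwedge^{n}_S(P^{\vee})$, pure of weight $-n$. To identify the Yoneda product with the exterior product I would observe that a class $\xi\in(\bigwedge^{n}_S P)^{\vee}$ is lifted to the chain endomorphism of $K$ given by $T$-linear contraction by $\xi$, and that such contractions compose as $\iota_{\xi}\circ\iota_{\eta}=\iota_{\eta\wedge\xi}$ — a purely local verification.

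It then remains to establish the general principle that a dg algebra $B=\bigoplus_{w}B_{(w)}$ carrying a compatible extra $\mathbb Z$-grading, with $d$ of weight $0$ and $H^{n}(B)_{(w)}=0$ for $w\neq -n$, is formal; applying it to $B=\End_T(K)$ finishes the proof. Each $B_{(w)}$ is a subcomplex with cohomology concentrated in degree $-w$, so the canonical truncation $\tau^{\le -w}B_{(w)}\hookrightarrow B_{(w)}$ is a quasi-isomorphism, and I would check that $B':=\bigoplus_{w}\tau^{\le -w}B_{(w)}$ is a sub-dg-algebra (the only point being that in top degree one multiplies cocycles, whose product is again a cocycle). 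The graded subspace $I\subset B'$ which in weight $w$ agrees with $B_{(w)}$ in degrees $<-w$ and equals the coboundaries $B^{-w}_{(w)}$ in degree $-w$ is an acyclic two-sided dg ideal: acyclicity is immediate, and the ideal property is the Leibniz rule ($d\xi\cdot y=d(\xi y)$ when $dy=0$, and symmetrically). Hence $B\xleftarrow{\sim}B'\xrightarrow{\sim}B'/I$ are dg algebra quasi-isomorphisms with $B'/I\cong H^{*}(B)$ of zero differential, so $B$ is formal; combined with the previous paragraph this shows $\REnd_T(S)$ is formal with cohomology algebra $\bigwedge^{*}_S(P^{\vee})$.

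The computational steps — that $K$ resolves $S$, the vanishing of the differential on $\Hom_T(K,S)$, and the Leibniz bookkeeping in the last paragraph — are routine; the conceptual heart is the observation that the cohomology is \emph{pure} for the weight grading, after which formality is automatic. I expect the only genuinely finicky point to be keeping track of signs when identifying the Yoneda product on $\operatorname{Ext}^{*}_T(S,S)$ with exterior multiplication over the (not necessarily field) base $S$.
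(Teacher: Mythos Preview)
Your argument is correct, but it is more elaborate than necessary; the paper's proof is a two-line version of what you do. You already write down the contraction map $\xi\mapsto\iota_\xi$ from $\bigwedge^{\ast}_S P^{\vee}$ into $\End_T(K)$ in order to identify the Yoneda product; the paper observes that this single map is a \emph{dg algebra} morphism (contraction super-commutes with the Koszul differential $\eta\cap-$) and that its composite with the augmentation $\End_T(K)\to\Hom_T(K,S)$ is the identity on $\bigwedge^{\ast}_S P^{\vee}$. That is already a quasi-isomorphism of dg algebras from the cohomology to a model of $\REnd_T(S)$, so formality and the product identification come out simultaneously. Your weight-purity argument ($H^{n}$ pure of weight $-n\Rightarrow$ formal, via $\bigoplus_w\tau^{\le -w}B_{(w)}$ and the coboundary ideal) is a perfectly good substitute and has the advantage of being a reusable principle applicable when no explicit section is visible; here, however, it duplicates effort, since the explicit section is precisely the contraction map you already have in hand.
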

\def\Ext{\operatorname{Ext}}
\begin{proof} We may compute the cohomology $H^\ast(A)=\Ext^\ast_T(S,S)$ via the standard Koszul resolution of $S$ given by $\mathbf{K}:=(\Sym\nolimits_S (\Sigma P)\otimes_S T,\eta\cap-)$, where $\eta$ is the unit element in $P^\vee \otimes_S P$.
Since the differential in $\mathbf{K}$ becomes zero after applying $\Hom_T(-,S)$, we have abelian group isomorphisms
\begin{equation}
\label{eq:ext}
\Ext^\ast_T(S,S)\simeq H^\ast(\Hom_T(\mathbf{K},S))\simeq\Sym\nolimits_S \Sigma^{-1}( P^\vee)\,.
\end{equation}
Next, we observe that if $\omega\in \Sym^n_S \Sigma^{-1}(P^\vee)$, then $\omega\cap-$ (super-)commutes with $\eta\cap-$. This implies that $\omega\cap-$ defines a morphism of complexes $\mathbf{K}\r \Sigma^n \mathbf{K}$.
We obtain in this way a dg algebra morphism $\Sym_S \Sigma^{-1}(P^\vee)\r \End_T(\mathbf{K}), \omega\mapsto \omega\cap-$, such that
the composition $\Sym_S \Sigma^{-1}(P^\vee)\r H^\ast(\End_T(\mathbf{K}))\r H^\ast(\Hom_T(\mathbf{K},S))$ is
the inverse of \eqref{eq:ext}. This concludes the proof.
\end{proof}
We may now conclude the proof of Theorem \ref{thm:main} in the affine
case. Thanks to Theorem \ref{thm:formality}, the dg category
$\perf_\dg(X)_Z$ is Morita equivalent to $H^\ast(A)$ and the dg
functor $i_\ast\colon \perf_\dg(Z) \to \perf_\dg(X)_Z$ identifies with
the inclusion of $H^0(A)$ into $H^\ast(A)$. Using
Lemma \ref{lem:grading} and  Remark \ref{rk:pure}, we hence conclude
that the induced morphism \eqref{eq:induced} is invertible.
\subsection*{Step III: General case}
In order to bootstrap the proof of Theorem \ref{thm:devissage} from
the affine case to the general case, we use  the following
induction principle:
\begin{proposition}{(see \cite[Prop.~3.3.1]{BV})}\label{prop:reduction}
Given a property $\cP$, assume the following:
\begin{itemize}
\item[(A1)] The property $\cP$ holds for all affine $k$-schemes $X$;
\item[(A2)] Let $V_1 \cup V_2=X$ be a Zariski open cover of a scheme $X$; 
such that $X,V_1,V_2,V_{12}$ are quasi-compact quasi-separated
(see \S\ref{sec:Nisnevich}). If the property $\cP$ holds for $V_1$, $V_2$, and $V_{12}$, then it also holds for $X$.
\end{itemize}
Under the assumptions (A1)-(A2), the property $\cP$ holds for all quasi-compact quasi-separated $k$-schemes.
\end{proposition}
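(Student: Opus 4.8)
The plan is to reduce everything to property (A1) by a double induction on the number of opens in a cover of $X$. The one subtlety is that in a merely quasi-separated scheme the intersection of two affine opens is only quasi-compact, not affine, so a naive single induction on the size of an affine cover does not close up; I would therefore first dispose of quasi-compact open subschemes of affine schemes, and only then treat general quasi-compact quasi-separated schemes.

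\textbf{Step I.} First I would show that $\cP$ holds for every quasi-compact open subscheme $X$ of an affine scheme $\Spec(R)$. Pick $f_1,\dots,f_n\in R$ with $X=D(f_1)\cup\cdots\cup D(f_n)$ and induct on $n$. For $n\le 1$ the scheme $X$ is either empty or a principal open $D(f_1)=\Spec(R_{f_1})$, hence affine, and (A1) applies. For $n\ge 2$ put $V_2:=D(f_n)$, $V_1:=D(f_1)\cup\cdots\cup D(f_{n-1})$ and $V_{12}:=V_1\cap V_2=D(f_1f_n)\cup\cdots\cup D(f_{n-1}f_n)$: then $V_2$ is affine, $V_1$ is a quasi-compact open of $\Spec(R)$ covered by $n-1$ principal opens, and $V_{12}$ is a quasi-compact open of the affine scheme $\Spec(R_{f_n})$ covered by the $n-1$ principal opens $D(f_if_n)$. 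Being open in affine schemes, all three are quasi-compact and separated, so the induction hypothesis together with (A1) gives $\cP$ for $V_1$, $V_2$ and $V_{12}$, and (A2) then yields $\cP$ for $X$.

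\textbf{Step II.} Next I would treat an arbitrary quasi-compact quasi-separated $X$. Choose a finite affine open cover $X=U_1\cup\cdots\cup U_n$ and induct on $n$, the cases $X=\varnothing$ and $n\le 1$ being handled by (A1). For $n\ge 2$ set $V_2:=U_n$, $V_1:=U_1\cup\cdots\cup U_{n-1}$ and $V_{12}:=V_1\cap V_2=\bigcup_{i<n}(U_i\cap U_n)$. Now $V_2$ is affine, so $\cP(V_2)$ holds by (A1); $V_1$ is an open subscheme of $X$, hence quasi-separated, and quasi-compact as a finite union of affines, so it is quasi-compact quasi-separated and covered by $n-1$ affines, whence $\cP(V_1)$ by the induction hypothesis; and since $X$ is quasi-separated every $U_i\cap U_n$ is quasi-compact, so $V_{12}$ is a quasi-compact open subscheme of the affine scheme $U_n$, whence $\cP(V_{12})$ by Step I. Applying (A2) to the Zariski cover $X=V_1\cup V_2$ gives $\cP(X)$.

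The one thing to watch is exactly this passage from Step I to Step II: Step I is needed precisely to handle the term $V_{12}$, which in the absence of separatedness is not affine but is still a quasi-compact open of an affine scheme. Beyond that, the only routine checks are that the auxiliary schemes produced at each stage are again quasi-compact and quasi-separated --- immediate, since open subschemes of quasi-separated schemes are quasi-separated and finite unions of quasi-compact opens are quasi-compact --- so that hypothesis (A2) is applicable throughout.
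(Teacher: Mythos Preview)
The paper does not supply its own proof of this proposition; it simply defers to \cite[Prop.~3.3.1]{BV}. Your two-step induction (first handling quasi-compact opens of affine schemes, then bootstrapping to arbitrary quasi-compact quasi-separated schemes by induction on the size of an affine cover) is correct and is precisely the argument given in \cite{BV}, so there is nothing to add.
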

Let $\cP$ be the following property: ``If $X$ is a smooth $k$-scheme, then \eqref{eq:induced} is invertible for every smooth closed subscheme $Z$''.
 As proved in Step II, the assumption (A1) of Proposition \ref{prop:reduction} is satisfied. 
Let us now verify assumption (A2). Given a smooth $k$-scheme $X$ and a smooth closed subscheme $i\colon Z \hookrightarrow X$, consider the following commutative diagram
\begin{equation}\label{eq:bigsquares}
\xymatrix@C=2.5em@R=2em{
E(X;Z) \ar[rrr]^-{E(p_1^\ast)} \ar[ddd]_-{E(p_2^\ast)}& & & E(V_1; Z_1) \ar[ddd] \\
& E(Z) \ar[ul]_-{E(i_\ast)}  \ar[r]^-{E(p_1^\ast)} \ar[d]_-{E(p_2^\ast)}& E(Z_1) \ar[ur]^-{E(i^1_\ast)} \ar[d] & \\
& E(Z_2) \ar[dl]_-{E(i^2_\ast)} \ar[r] & E(Z_{12}) \ar[dr]^-{E(i^{12}_\ast)} & \\
E(V_2; Z_2) \ar[rrr] & & & E(V_{12}; Z_{12})
}
\end{equation}
in the triangulated category $\cT$, where $Z_i:=Z\cap V_i$ and
$Z_{12}:=Z_1 \cap Z_2$.  In order to prove assumption (A2) we need
 to show that if the morphisms $E(i^1_\ast)$,
$E(i^2_\ast)$, and $E(i^{12}_\ast)$ are invertible, then $E(i_\ast)$
is also invertible. Thanks to
Theorem \ref{thm:Nisnevich}, \eqref{eq:bigsquares} gives rise to the following morphism between LES-triangles:
%
$$
\xymatrix@C=2em@R=2.5em{
  E(Z) \ar[d]_-{E(i_\ast)} \ar[r]^-\pm & E(Z_1)\oplus E(Z_2) \ar[d]_-{E(i^1_\ast)\oplus E(i^2_\ast)}\ar[d]^{\simeq} \ar[r] & E(Z_{12}) \ar[d]_-{E(i^{12}_\ast)}\ar[d]^{\simeq} \ar[r]^-{\delta} & \Sigma E(Z) \ar[d]_-{E(i_\ast)}  \\
  E(X;Z) \ar[r]_-\pm & E(V_1;Z_1) \oplus E(V_2;Z_2) \ar[r] &
  E(V_{12};Z_{12}) \ar[r]_-{\delta} & \Sigma E(X;Z)\,. }
$$
Making use of the $5$-lemma, we conclude that $E(i_\ast)$ becomes invertible after applying $\Hom_{\cT}(b,-)$, for every object $b$ of $\cT$. The Yoneda lemma hence implies that $E(i_\ast)$ is invertible. This finishes the proof.
\section{Proof of the generalizations (G1)-(G2)}
\subsection*{Generalization (G1)}\label{sub:G1}
Consider the cartesian square of algebraic spaces 
\begin{equation}\label{eq:square-spaces}
\xymatrix{
\cV_{12}:=\cV_1 \times_{\cX} \cV_2 \ar[d] \ar[r] & \cV_2 \ar[d]^-{p_2} \\
\cV_1 \ar[r]_-{p_1} & \cX\,,
}
\end{equation}
where $p_1$ is an open immersion and $p_2$ is an {\'e}tale map inducing an isomorphism on reduced algebraic spaces $p_2^{-1}(\cX\backslash \cV_1)_{\mathrm{red}}\simeq (\cX\backslash \cV_1)_{\mathrm{red}}$. Let $i\colon \cZ \hookrightarrow \cX$ be a closed algebraic subspace, $\cZ_1:=\cZ\cap \cV_1$, $\cZ_2:=p_2^{-1}(\cZ)$, and $\cZ_{12}:=\cZ_1 \times_\cZ \cZ_2$.
\begin{remark}\label{rk:alg-spaces}
Theorem \ref{thm:Nisnevich} holds similarly with \eqref{eq:Nisnevich} replaced by \eqref{eq:square-spaces} and $Z_1$, $Z_2$, and $Z_{12}$, replaced by $\cZ_1$, $\cZ_2$, and $\cZ_{12}$, respectively. 
\end{remark}
In the case of algebraic spaces, Proposition \ref{prop:reduction} admits the following variant:
\begin{proposition}{(see \cite[Tag 08GL, Lem.~62.8.3]{Stacks})}\label{prop:reduction2}
Given a property $\cP$, assume:
\begin{itemize}
\item[(A1)] The property $\cP$ holds for all affine $k$-schemes $\cX$;
\item[(A2)] If the property $\cP$ holds for the quasi-compact quasi-separated algebraic spaces $\cV_1$, $\cV_2$, $\cV_{12}$ in \eqref{eq:square-spaces}, with  $\cV_2$ being an affine $k$-scheme, then it also holds~for~$\cX$.
\end{itemize}
Under the assumptions (A1)-(A2), the property $\cP$ holds for all quasi-compact quasi-separated algebraic spaces.
\end{proposition}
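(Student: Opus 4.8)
The plan is to follow the argument of \cite[Tag 08GL]{Stacks}. It proceeds in two stages: first establish $\cP$ for every quasi-compact quasi-separated $k$-scheme, and then bootstrap from schemes to algebraic spaces by an induction along elementary distinguished squares of the form \eqref{eq:square-spaces}. The two geometric inputs needed for the second stage are the fact that the schematic locus of a quasi-compact quasi-separated algebraic space is a quasi-compact dense open subscheme, and the existence of \emph{affine \'etale neighbourhoods} of closed affine subspaces: given a closed affine subspace of $\cX$, there exist an affine scheme $\cV_2$, an \'etale morphism $p_2\colon\cV_2\to\cX$, and a closed immersion of the given subspace into $\cV_2$ compatible with the maps to $\cX$, so that $p_2$ becomes an isomorphism over that subspace.

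For the scheme case, observe that a Zariski distinguished square $X=\cV_1\cup\cV_2$ with $\cV_2$ an affine open is a special case of \eqref{eq:square-spaces}: an open immersion is \'etale, and the inclusion $X\setminus\cV_1\subseteq\cV_2$ forces $p_2$ to be an isomorphism over the reduced complement of $\cV_1$. Hence hypothesis (A2) applies to it, and one re-runs the induction on the cardinality of a finite affine open cover carried out in the proof of Proposition \ref{prop:reduction}, always choosing the patch $\cV_2$ to be affine and handling the quasi-affine intersection $\cV_{12}=\cV_1\cap\cV_2$ recursively by the same induction. This yields $\cP$ for every quasi-compact quasi-separated $k$-scheme.

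For the algebraic space case, one argues by induction on a measure of ``how far $\cX$ is from being a scheme.'' If $\cX$ is a scheme we are done by the first stage; otherwise let $\cU\subseteq\cX$ be the (quasi-compact, dense, open) schematic locus, let $\cZ:=(\cX\setminus\cU)_{\mathrm{red}}$, and peel an affine piece off of $\cZ$: applying the affine \'etale neighbourhood construction to a suitable closed affine subspace lying in $\cZ$ produces a distinguished square \eqref{eq:square-spaces} with $\cV_2$ affine, with $\cV_1$ a quasi-compact quasi-separated algebraic space whose non-schematic locus requires strictly fewer affine patches than that of $\cX$, and with $\cV_{12}=\cV_1\times_\cX\cV_2$ likewise simpler (being \'etale over the scheme-like part). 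Then $\cP(\cV_2)$ holds by (A1), $\cP(\cV_1)$ and $\cP(\cV_{12})$ hold by the first stage together with the inductive hypothesis, and (A2) propagates $\cP$ to $\cX$. The main obstacle is making this induction terminate: since quasi-compact quasi-separated algebraic spaces need not be Noetherian, one cannot simply induct on dimension, and the delicate point is to fix the correct invariant — essentially the minimal number of affine patches needed to cover the non-schematic locus, lexicographically refined so as to control the auxiliary spaces $\cV_1$ and $\cV_{12}$ — and to choose the elementary \'etale neighbourhoods compatibly with it; this bookkeeping, which also uses Zariski's main theorem for algebraic spaces, is carried out in \cite[Tag 08GL]{Stacks}, and I would refer the reader there for the details rather than reproduce it.
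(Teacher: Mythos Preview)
The paper does not give its own proof of this proposition: it simply cites \cite[Tag 08GL, Lem.~62.8.3]{Stacks} and moves on. Your proposal sketches precisely the Stacks-project argument that the paper defers to, so your approach is the same as the paper's (by reference).
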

Let $\cX$ be a smooth algebraic space, $i\colon \cZ \hookrightarrow \cX$ a smooth closed algebraic space, and $j\colon \cU \hookrightarrow \cX$ the open complement of $\cZ$. Similarly to \eqref{eq:ses-classical}, we have the following short exact sequence of dg categories 
$$ 0 \too \perf_\dg(\cX)_\cZ \too \perf_\dg(\cX) \stackrel{j^\ast}{\too} \perf_\dg(\cU)\too 0\,.$$
Therefore, in order to establish (G1), it suffices to prove the
generalization of Theorem \ref{thm:devissage} obtained by replacing
\eqref{eq:induced} with $E(i_\ast)\colon E(\cZ) \to E(\cX;\cZ)$. Steps
I-II hold {\em mutatis mutandis}. For Step III, simply
replace Proposition \ref{prop:reduction} by Proposition
\ref{prop:reduction2} and run the same argument using Remark
\ref{rk:alg-spaces}.
\subsection*{Generalization (G2)}\label{sub:G2}
Given a dg category $\cA$, Drinfeld proved in \cite[Prop.~1.6.3]{Drinfeld} that the functor $-\otimes \cA$ preserves short exact sequences of dg categories. Consequently, in order to establish 
(G2), it suffices to prove the generalization of Theorem \ref{thm:devissage} obtained by replacing \eqref{eq:induced} with the morphism
\begin{equation}\label{eq:morphism-generalized}
 E(i_\ast \otimes \id) \colon E(Z;\cA) \too E(X;Z;\cA)\,.
 \end{equation}
Step I holds {\em mutatis mutandis}. For Step II, recall from Theorem \ref{thm:formality} that the dg category $\perf_\dg(X)_Z$ is Morita equivalent to $H^\ast(A)$ and that the dg functor $i_\ast\colon \perf_\dg(X) \to \perf_\dg(X)_Z$ identifies with the inclusion of $H^0(A)$ into $H^\ast(A)$. Thanks to Remarks \ref{rk:pure}-\ref{rk:tensor}, we hence conclude from Lemma \ref{lem:grading} that the morphism \eqref{eq:morphism-generalized} is also invertible. For Step III, run the same argument using Remark \ref{rk:Drinfeld}. Finally, the second claim of Generalization (G2) follows from  Lemma \ref{lem:product}.
\section{Proof of Theorem \ref{thm:main2}}\label{sec:proofmain2}
We start with the following  birationality result:
\begin{proposition}\label{prop:birational}
Let $k$ be a perfect field, $X$ and $Y$ two birational smooth connected $k$-schemes, and $E\colon \dgcat(k) \to \cT$ a functor which satisfies conditions (C1)-(C2). 
Let $\cU$ be a triangulated subcategory of $\cT$.
Assume that all the objects $E(W)$, with~$W$ a smooth $k$-scheme of dimension strictly inferior to $\mathrm{dim}(X)$, belong to $\cU$.
Under these assumptions, if $E(X)$ or $E(Y)$ belongs to $\cU$, so it does the other one.
\end{proposition}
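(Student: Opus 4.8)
The plan is to deduce the statement from the Gysin triangle of Theorem~\ref{thm:main} by a Noetherian induction in which one repeatedly removes the smooth generic stratum of the locus where $X$ and $Y$ fail to be isomorphic. Since $X$ is smooth and connected it is integral, and likewise $Y$; as $X$ and $Y$ are birational they contain dense open subschemes $U\subseteq X$ and $V\subseteq Y$ together with an isomorphism $U\isoto V$, so in particular $E(U)\simeq E(V)$. Moreover $\dim X=\dim Y$, so the hypothesis on smooth $k$-schemes of dimension $<\dim X$ is the same whether we argue over $X$ or over $Y$. It therefore suffices to establish the following ``shrinking'' assertion: for a smooth connected $k$-scheme $X$ and a dense open immersion $W\hookrightarrow X$, one has $E(X)\in\cU$ if and only if $E(W)\in\cU$. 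Granting this, applying it to $U\subseteq X$ and to $V\subseteq Y$ and using $E(U)\simeq E(V)$ yields the Proposition at once.

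To prove the shrinking assertion I would argue by Noetherian induction on the reduced closed complement $Z:=(X\setminus W)_{\mathrm{red}}$. If $Z=\varnothing$ there is nothing to prove. If $Z\neq\varnothing$, then since $k$ is perfect the reduced finite-type scheme $Z$ has a dense open smooth subscheme $Z^{\mathrm{sm}}\subseteq Z$; set $Z':=Z\setminus Z^{\mathrm{sm}}$, a closed subset of $X$ strictly contained in $Z$. Then $X\setminus Z'$ is a smooth connected $k$-scheme (a nonempty open of the irreducible scheme $X$), $Z^{\mathrm{sm}}$ is a smooth closed subscheme of $X\setminus Z'$, and the open complement of $Z^{\mathrm{sm}}$ in $X\setminus Z'$ is precisely $W$; hence Theorem~\ref{thm:main} gives a distinguished triangle $E(Z^{\mathrm{sm}})\to E(X\setminus Z')\to E(W)\to\Sigma E(Z^{\mathrm{sm}})$. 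Each connected component of $Z^{\mathrm{sm}}$ is a smooth $k$-scheme of dimension $\le\dim Z<\dim X$ (a proper closed subset of an irreducible finite-type $k$-scheme has strictly smaller dimension), so its value under $E$ lies in $\cU$ by hypothesis; as $Z^{\mathrm{sm}}$ has finitely many components and $\cU$ is closed under finite direct sums, $E(Z^{\mathrm{sm}})\in\cU$. Since $\cU$ is closed under cones and suspensions, the triangle shows $E(X\setminus Z')\in\cU\iff E(W)\in\cU$. Finally $X\setminus Z'$ is again a dense open of $X$ whose reduced complement $Z'$ is strictly smaller than $Z$, so the induction hypothesis gives $E(X)\in\cU\iff E(X\setminus Z')\in\cU$; chaining the two equivalences completes the induction.

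The only genuinely delicate point --- the ``hard part'' --- is the choice of stratification: at every stage of the induction one must excise a \emph{smooth} closed subscheme of a \emph{smooth} ambient scheme, so that Theorem~\ref{thm:main} applies, while keeping its dimension strictly below $\dim X$, so that the assumption on $\cU$ can be invoked. The perfectness of $k$ is exactly what makes this work: it ensures that the smooth locus of the reduced complement is dense, and this is the sole place in the argument where that hypothesis enters. The remaining ingredients --- dense opens of integral schemes, a birational map giving isomorphic dense opens, and stability of a triangulated subcategory under cones, shifts, and finite direct sums --- are formal.
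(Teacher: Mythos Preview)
Your argument is correct and is essentially the paper's own proof: reduce to the case of a dense open inclusion, use perfectness of $k$ to stratify the reduced complement into smooth locally closed strata, and then apply the Gysin triangle of Theorem~\ref{thm:main} to each stratum. The paper phrases the iteration via a fixed stratification $\varnothing=Z_{-1}\hookrightarrow Z_0\hookrightarrow\cdots\hookrightarrow Z_n=Z$ with $Z_r\setminus Z_{r-1}$ smooth, whereas you phrase it as Noetherian induction on the complement; these are equivalent formulations of the same idea.
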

\begin{proof}
It is sufficient to consider the case where $Y$ is an open subscheme of $X$. Let $Z \hookrightarrow X$ be the closed complement of $Y$. Since by assumption $k$ is a perfect field, there exists a stratification of $Z$ into closed subschemes
$$ \varnothing =Z_{-1} \hookrightarrow Z_0 \hookrightarrow \cdots \hookrightarrow Z_r \hookrightarrow \cdots \hookrightarrow Z_{n-1} \hookrightarrow Z_n =Z$$
such that $Z_r \backslash Z_{r-1}$ is smooth for every $0 \leq r \leq n$. Consider the Gysin triangles
\begin{equation}\label{eq:Gysin-several}
E(Z_r \backslash Z_{r-1}) \too E(X\backslash Z_{r-1}) \too E(X\backslash Z_r)\stackrel{\partial}{\too} \Sigma E(Z_r \backslash Z_{r-1})
\end{equation}
provided by Theorem \ref{thm:main}. Since $Y$ is an open {\em dense} subscheme of $X$, the dimension of $Z_r \backslash Z_{r-1}$ is strictly inferior to $\mathrm{dim}(X)$ for every $0 \leq r \leq n$. Therefore, the proof follows from now recursively from the Gysin triangles \eqref{eq:Gysin-several}.
\end{proof}

\subsection*{Item (i)}
Let $\cU$ be the triangulated category $\cT^{\mathrm{sp}}$. Without loss of generality we may assume that
$X$ is connected. Furthermore, using induction on $\mathrm{dim}(X)$ we
may assume that all the objects $E(W)$, with $W$ a smooth $k$-scheme
of dimension strictly inferior to $\mathrm{dim}(X)$, belong to
$\cU$.  Since by
assumption $k$ admits resolution of singularities, $X$ can be realized
as the open complement of a strict normal crossing divisor $D$ inside
a smooth projective $k$-scheme $Y$. Using the fact that $E(Y)$ belongs to $\cU$, the proof follows now from Proposition \ref{prop:birational}.
\begin{remark}
  Let $D_i$ be the irreducible components of $D$. One may easily show that the object $E(X)$ belongs to the smallest triangulated
  subcategory of $\cT$ containing the objects $E(Y)$ and
  $\{E(D_{r_1} \cap \cdots \cap D_{r_i})\,|\, 1 \leq r_1, \ldots, r_i
  \leq m\}$.
\end{remark}
\subsection*{Item (ii)}

Let $\cU$ be the triangulated category $\overline{\cT^{\mathrm{sp}}}$. Without loss of generality we may assume that $X$ is connected. Furthermore by induction on  $\mathrm{dim}(X)$ we may assume that all the objects $E(W)$, with $W$ a smooth $k$-scheme of dimension strictly inferior to $\mathrm{dim}(X)$, belong to $\cU$.
We start with the following result:
\begin{proposition}
\label{lem:alt}
For each prime $l\neq p$, there exists an open dense subscheme $V\hookrightarrow X$ and a finite \'etale cover $g_V:V'\r V$ such that $g_{V\ast}(\cO_{V'})\simeq \cO_V^{\oplus d}$ with $d$ coprime to $l$. Moreover, $E(V')$ belongs to $\cU$.
\end{proposition}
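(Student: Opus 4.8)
The plan is to extract the pair $(V', g_V)$ from a prime-to-$l$ alteration of a projective compactification of $X$, and then to deduce $E(V') \in \cU$ from an iterated Gysin argument in the spirit of the proof of Proposition~\ref{prop:birational}.

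First I would compactify and alter. Since $X$ is smooth and connected it is integral; by Nagata, choose an integral projective $k$-scheme $\bar{X}$ containing $X$ as a dense open subscheme. By Gabber's refined theory of alterations applied to $\bar{X}$ and the prime $l \neq p = \mathrm{char}(k)$ --- possibly after replacing $k$ by a finite, hence separable (as $k$ is perfect), field extension of degree prime to $l$, a harmless operation whose degree I absorb below --- there exist an integral, smooth, projective $k$-scheme $\bar{X}'$ and a projective alteration $f \colon \bar{X}' \to \bar{X}$ which is generically finite, generically étale, of generic degree $d$ prime to $l$. By generic flatness, by properness together with finiteness of the generic fibres, and by generic étaleness, there is a dense open $V \subseteq X$ over which $f$ restricts to a finite étale morphism $g_V \colon V' := f^{-1}(V) \to V$ of degree $d$; as $f$ is surjective and $\bar{X}'$ is irreducible, $V'$ is a dense open of $\bar{X}'$. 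The locally free rank-$d$ sheaf $\cE := (g_V)_\ast \cO_{V'}$ is trivialised after one more shrinking: a basis of $\cE$ at the generic point of $V$ spreads out to an $\cO_V$-linear morphism $\cO_V^{\oplus d} \to \cE$ over a dense open which is an isomorphism at the generic point, hence over a dense open $V_2 \subseteq V$; replacing $V$ by $V_2$ and $V'$ by $f^{-1}(V_2)$ preserves all the listed properties and yields $(g_V)_\ast \cO_{V'} \simeq \cO_V^{\oplus d}$ with $d$ prime to $l$.

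Then I would check that $E(V') \in \cU$. Set $S := \bar{X}' \setminus V'$ with its reduced structure. Since $\bar{X}'$ is integral of dimension $\dim \bar{X} = \dim X$ and $V'$ is a dense open, $\dim S < \dim X$; as $k$ is perfect, there is a filtration by closed subschemes $\varnothing = Z_{-1} \hookrightarrow Z_0 \hookrightarrow \dots \hookrightarrow Z_n = S$ with each $Z_r \setminus Z_{r-1}$ smooth over $k$, necessarily of dimension $< \dim X$. The Gysin triangles of Theorem~\ref{thm:main} take the form
$$
E(Z_r \setminus Z_{r-1}) \too E(\bar{X}' \setminus Z_{r-1}) \too E(\bar{X}' \setminus Z_r) \stackrel{\partial}{\too} \Sigma E(Z_r \setminus Z_{r-1})\,,
$$
and since $\bar{X}' \setminus Z_{-1} = \bar{X}'$ and $\bar{X}' \setminus Z_n = V'$, iterating them --- exactly as in the proof of Proposition~\ref{prop:birational} --- shows that $E(V')$ lies in the thick triangulated subcategory of $\cT$ generated by $E(\bar{X}')$ and the objects $E(Z_r \setminus Z_{r-1})$. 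Now $E(\bar{X}') \in \cT^{\mathrm{sp}} \subseteq \cU$ because $\bar{X}'$ is smooth projective over $k$, and each $E(Z_r \setminus Z_{r-1}) \in \cU$ by the inductive hypothesis on the dimension; as $\cU = \overline{\cT^{\mathrm{sp}}}$ is a thick triangulated subcategory, $E(V') \in \cU$.

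The single substantive ingredient will be the appeal to Gabber: in characteristic $p$ one needs that a projective compactification of $X$ admits a generically étale alteration of prime-to-$l$ generic degree with smooth projective source --- so that, over a dense open, the restricted alteration is finite étale (and not merely finite flat), and so that the auxiliary extension of $k$ it may require is itself of degree prime to $l$. Everything else (Nagata compactification, generic flatness and finiteness of fibres, spreading out a basis, and the Gysin-plus-stratification bookkeeping) is routine.
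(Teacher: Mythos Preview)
Your argument is correct and follows essentially the same route as the paper: compactify, apply Gabber's prime-to-$l$ alteration to obtain a smooth projective source, shrink to make the map finite \'etale with free pushforward, and then deduce $E(V')\in\cU$ from the smooth projectivity of the source together with the Gysin/stratification argument (which the paper packages as a direct citation of Proposition~\ref{prop:birational}). Your aside about possibly enlarging $k$ is unnecessary here since $k$ is perfect, but it is harmless.
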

\begin{proof}
Gabber's refined version of
de Jong's theory  \cite{Jong} of alterations (see \cite[Thms. 3(i) and 3.2.1]{Laszlo}) allows us to construct for each prime $l\neq p$ a diagram
\begin{equation*}
\xymatrix{
V':=V \times_X X' \ar[d]_-{g_V} \ar[r]^-{j'}& X':=X\times_{\overline{X}} Y \ar[d] \ar[r] & Y \ar[d]^-{g} \\
V \ar[r]_-j & X \ar[r] & \overline{X}\,,
}
\end{equation*}
where $\overline{X}$ is a compactification of $X$, $g$ is an
alteration, $j\colon V \hookrightarrow X$ is an open dense subscheme,
and $g_V$ is a finite {\'e}tale surjective map of rank $d$ prime to
$l$.  Shrinking $V$ if necessary, we may assume that
$(g_V)_\ast(\cO_{V'})\simeq \cO_V^{\oplus d}$. Since $Y$ is irreducible, the open subscheme $V'\hookrightarrow Y$ is
dense. Using the fact that $Y$ is smooth projective and
$\mathrm{dim}(Y)=\mathrm{dim}(X)$, we conclude from Proposition~\ref{prop:birational} that $E(V')$ belongs to~$\cU$.
\end{proof}
Choose an arbitrary prime $l_0\neq p$ and construct $V,V',g_V,d$ as
in Proposition \ref{lem:alt} with $l=l_0$. Denote the result by
$V'_0,V_0,g_{V_0},d_0$.  Let $\{l_1,\ldots,l_t\}$ be the prime factors
of $d_0$ distinct from $p$. Construct $V',V,g_V,d$ for each
$l=l_i$ and denote the result by
$V'_i,V_i,g_{V_i},d_i$. Let us write $V$ for the intersection $\bigcap_{i=1}^t V_i$. Thanks to Proposition \ref{prop:birational}, we can assume that $V_0=V_1=\cdots=V_t=V$ and replace $V'_i$ by $g^{-1}_{V_i}(V)$ without changing the statement of Proposition \ref{lem:alt}; in particular, $E(V'_i)$ belongs to $\cU$. Lemma \ref{prop:retractions} below hence implies that the composition $E(g_{V_i,\ast})\circ E(g_{V_i}^\ast)\colon E(V) \to E(V'_i) \to E(V)$ is equal to $d_i$ times the identity.

Now, choose integers $(a_i)_i\in \bbZ$ such that $\sum_i a_i d_i=\mathrm{gcd}(d_0, \ldots, d_t)$. Under these choices, the following (matrix) composition
\[
E(V)\xrightarrow{[E(g_{V_i}^\ast)]_{t \times 1}} \bigoplus_{i=0}^t E(V'_i) \xrightarrow{[a_iE(g_{V_i,\ast})]_{1 \times t}} E(V)
\]
is equal to $e:=\mathrm{gcd}(d_0, \ldots, d_n)$ times the identity. We claim that $e$ is a power of $p$. Indeed, if
$q\neq p$ is a prime which divides $e$, then $q$ divides $d_0$. This implies that $q=l_i$ for some $1\leq i\leq t$. But then, we would conclude that $q$ does not divides $d_i$, which is a contradiction! Since $e$ is a power of $p$ and, by assumption, the triangulated category $\cT$ is $\bbZ[1/p]$-linear, the object $E(V)$ is a direct summand of $\bigoplus_{i=0}^t E(V'_i)$. Using the fact that $E(V'_i)$ belongs to $\cU$ and that the triangulated category $\cU$ is idempotent complete, we hence conclude that $E(V)$ also belongs to $\cU$. The proof of Theorem Item (ii) follows now from Proposition \ref{prop:birational} applied to the open dense subscheme $j\colon V\hookrightarrow X$.
%
%
\begin{lemma}\label{prop:retractions}
 Let $f\colon X \to Y$ be a finite map between quasi-compact quasi-separated $k$-schemes such that
  $f_\ast(\cO_X)\simeq \cO_Y^{\oplus d}$. Then, for every additive invariant
  $F$, the composition $F(f_\ast)\circ
  F(f^\ast)\colon F(Y) \to F(X) \to F(Y)$ is equal to $d$ times the
  identity.
\end{lemma}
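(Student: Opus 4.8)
The plan is to reduce the statement to the universal additive invariant $\mathrm{U}_{\mathrm{add}}\colon\dgcat(k)\to\Hmo_0(k)$ of \S\ref{sec:universal}, where morphisms are Grothendieck classes of bimodules, and to carry out the (easy) computation there. First I would record the geometric input: since $f_\ast(\cO_X)\simeq\cO_Y^{\oplus d}$ is a finite locally free $\cO_Y$-module, the finite morphism $f$ is flat; hence $f^\ast$ and $f_\ast$ ($=Rf_\ast$, as $f$ is affine) are exact and preserve perfect complexes, so they define morphisms $\perf_\dg(Y)\to\perf_\dg(X)$ and $\perf_\dg(X)\to\perf_\dg(Y)$ in $\Hmo(k)$. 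The projection formula then gives a natural isomorphism $f_\ast f^\ast(\cF)\simeq\cF\otimes^{\mathbb{L}}_{\cO_Y}f_\ast(\cO_X)\simeq\cF^{\oplus d}$ for $\cF\in\perf_\dg(Y)$; in other words, the composite $f_\ast\circ f^\ast$ in $\Hmo(k)$ coincides with the morphism induced by the dg functor $H\colon\perf_\dg(Y)\to\perf_\dg(Y)$, $\cF\mapsto\cF^{\oplus d}$.

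Next, using that every additive invariant $F$ factors (uniquely) as $F=\overline{F}\circ\mathrm{U}_{\mathrm{add}}$ for an additive functor $\overline{F}$ (see \eqref{eq:equivalence}), and that an additive functor between additive categories preserves sums of parallel morphisms, it suffices to check that $\mathrm{U}_{\mathrm{add}}(f_\ast)\circ\mathrm{U}_{\mathrm{add}}(f^\ast)=d\cdot\mathrm{id}_{\mathrm{U}_{\mathrm{add}}(Y)}$ in $\Hmo_0(k)$. By functoriality of $\mathrm{U}_{\mathrm{add}}$ and the previous paragraph, the left-hand side equals $\mathrm{U}_{\mathrm{add}}(H)$, that is, the class in $K_0\rep(\perf_\dg(Y),\perf_\dg(Y))$ of the bimodule ${}_H\mathrm{B}$ of \eqref{eq:bimodule2}. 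Finally, since $H(\cF)=\cF^{\oplus d}$, direct inspection of \eqref{eq:bimodule2} yields ${}_H\mathrm{B}\simeq\bigoplus_{i=1}^{d}{}_{\id}\mathrm{B}$, whence $[{}_H\mathrm{B}]=d\cdot[{}_{\id}\mathrm{B}]=d\cdot\mathrm{id}_{\mathrm{U}_{\mathrm{add}}(Y)}$, as desired.

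I do not expect a genuine obstacle here: the argument is a soft transfer/restriction computation. The only points requiring a little care are standard facts about $\Hmo(k)$ --- that the derived functors $f^\ast$ and $f_\ast$ lift to morphisms in $\Hmo(k)$, that composition in $\Hmo(k)$ corresponds to the (derived) tensor product of the bimodules \eqref{eq:bimodule2}, and that a natural isomorphism of dg functors induces an isomorphism of the associated bimodules --- all of which are recalled in \cite[\S4]{ICM-Keller} and \cite{Additive}.
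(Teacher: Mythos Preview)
Your proposal is correct and follows essentially the same route as the paper: reduce to the universal additive invariant $\mathrm{U}_{\mathrm{add}}$, use the projection formula to identify $f_\ast f^\ast$ with $f_\ast(\cO_X)\otimes-\simeq(-)^{\oplus d}$, and conclude that the associated bimodule class is $d\cdot[{}_{\id}\mathrm{B}]$. The paper's proof is terser---it does not spell out why $f^\ast$ and $f_\ast$ preserve perfect complexes---but the argument is the same.
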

\begin{proof}
Since $F$ factors through the universal additive invariant $\mathrm{U}_{\mathrm{add}}$ (see \S\ref{sec:universal}), it is sufficient
to prove Lemma \ref{prop:retractions} in the case $F=\mathrm{U}_{\mathrm{add}}$. Thanks to the projection formula (see \cite[\S3.17]{TT}), the dg
functor $f_\ast f^\ast\colon \perf_\dg(Y)\to \perf_\dg(Y)$ is given by
$f_\ast(\cO_X)\otimes - $. 
Since by assumption $f_\ast(\cO_X)\simeq \cO_Y^{\oplus
  d}$, the $\perf_{dg}(Y)$-bimodule ${}_{f_\ast f^\ast}\mathrm{B}$ corresponding to  $f_\ast f^\ast$ (see \eqref{eq:bimodule2})
is isomorphic to
$({}_{\id}\mathrm{B})^{\oplus d}$ in the triangulated category
$\rep(\perf_\dg(Y),\perf_\dg(Y))$. Consequently, $[{}_{f_\ast
  f^\ast}\mathrm{B}]=d[{}_{\id}\mathrm{B}]$ in the Grothendieck group
$K_0\rep(\perf_\dg(Y),\perf_\dg(Y))$. The proof follows now from the fact that $[{}_{\id}\mathrm{B}]$ is the identity of the object
$\mathrm{U}_{\mathrm{add}}(Y) \in \Hmo_0(k)$.
\end{proof}

\section{Proof of Theorem \ref{thm:cellular2}}
We start with the following ``invariance'' result concerning affine
fibrations:
\begin{proposition}\label{prop:fibrations}
  Let $f\colon X \to Y$ be an affine fibration between quasi-compact quasi-separated $k$-schemes. For every
  functor $E\colon \dgcat(k) \to \cT$ which satisfies conditions
  (C1)-(C2), we have an induced isomorphism $E(f^\ast)\colon E(Y)
  \stackrel{\simeq}{\to} E(X)$.
\end{proposition}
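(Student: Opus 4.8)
The plan is to reduce to the case of a trivial fibration $Y\times\bbA^{d}\to Y$, where condition (C2) applies directly, and then to globalize by Zariski descent on the base.

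\emph{The trivial case.} Suppose first that $X=Y\times\bbA^{d}$ with $f$ the projection onto the first factor. By Lemma~\ref{lem:product} (applied to the factors $Y$ and $\bbA^{d}$) together with the standard fact that $\perf_{\dg}(\bbA^{d})=\perf_{\dg}(\Spec k[t_{1},\dots,t_{d}])$ is Morita equivalent to the dg $k$-algebra $k[t_{1},\dots,t_{d}]$ (the structure sheaf being a compact generator), the dg category $\perf_{\dg}(Y\times\bbA^{d})$ is Morita equivalent to $\perf_{\dg}(Y)\otimes k[t_{1},\dots,t_{d}]=\perf_{\dg}(Y)[t_{1},\dots,t_{d}]$, in such a way that $f^{\ast}$ corresponds to the canonical dg functor $\perf_{\dg}(Y)\to\perf_{\dg}(Y)[t_{1},\dots,t_{d}]$. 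Since the latter factors as the composite of the canonical dg functors $\perf_{\dg}(Y)\to\perf_{\dg}(Y)[t_{1}]\to\dots\to\perf_{\dg}(Y)[t_{1},\dots,t_{d}]$, and since $E$ inverts Morita equivalences (it satisfies (C1)) as well as every dg functor of the form $\cA\to\cA[t]$ (condition (C2)), we conclude that $E(f^{\ast})$ is invertible.

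\emph{Descent on the base.} As $Y$ is quasi-compact, the definition of an affine fibration furnishes a finite open cover $Y=\bigcup_{i=1}^{n}W_{i}$ by quasi-compact opens over each of which $f$ is isomorphic to the trivial projection; every finite intersection of the $W_{i}$ is then quasi-compact and quasi-separated, and the restriction of $f$ over it is again a trivial projection. We argue by induction on $n$, the case $n=1$ being the trivial case above. For $n>1$ put $V_{1}:=W_{1}\cup\dots\cup W_{n-1}$, $V_{2}:=W_{n}$ and $V_{12}:=V_{1}\cap V_{2}$, so that $V_{1}\cup V_{2}=Y$ is a Zariski distinguished square of quasi-compact quasi-separated schemes; the restrictions of $f$ over $V_{1}$, $V_{2}$ and $V_{12}$ admit trivializing covers of sizes $n-1$, $1$ and $n-1$, hence $E(f_{V_{1}}^{\ast})$, $E(f_{V_{2}}^{\ast})$ and $E(f_{V_{12}}^{\ast})$ are invertible by the induction hypothesis. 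Pulling this square back along $f$ produces the Zariski distinguished square $X_{V_{1}}\cup X_{V_{2}}=X$ with $X_{V_{12}}=X_{V_{1}}\cap X_{V_{2}}$, again of quasi-compact quasi-separated schemes, and $f^{\ast}$ is a morphism from the first square to the second. Applying Theorem~\ref{thm:Nisnevich} to both squares, with the distinguished closed subscheme taken equal to the ambient scheme so that $E(X;Z)$ reduces to $E(X)$, and using the functoriality asserted there, we get a morphism of ``Mayer--Vietoris'' LES-triangles from $E(Y)\to E(V_{1})\oplus E(V_{2})\to E(V_{12})\to\Sigma E(Y)$ to $E(X)\to E(X_{V_{1}})\oplus E(X_{V_{2}})\to E(X_{V_{12}})\to\Sigma E(X)$, whose two middle vertical maps are invertible and whose outer vertical maps are $E(f^{\ast})$ and $\Sigma E(f^{\ast})$. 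Applying $\Hom_{\cT}(b,-)$ for an arbitrary object $b$ and using the five lemma, $E(f^{\ast})$ becomes invertible after $\Hom_{\cT}(b,-)$; by the Yoneda lemma $E(f^{\ast})$ is itself invertible.

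The main obstacle is the second step, specifically the verification that $f^{\ast}$ --- together with the Zariski restriction dg functors --- induces a \emph{strict} morphism between the short exact sequences of dg categories occurring in the proof of Theorem~\ref{thm:Nisnevich}, so that one genuinely obtains a morphism of the associated distinguished triangles, and hence of the Mayer--Vietoris LES-triangles; the remaining input is the routine bookkeeping ensuring that all the schemes and intersections in the induction stay quasi-compact and quasi-separated with trivializable restricted fibrations. Alternatively, the induction on $n$ could be organised through Proposition~\ref{prop:reduction}, applied for each $n$ to the property ``every affine fibration over the given $k$-scheme which is trivial over some open cover of size $\le n$ is inverted by $E$'', at the price of a somewhat heavier formalism.
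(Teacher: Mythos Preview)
Your proof is correct and follows essentially the same approach as the paper: the trivial case is handled via Lemma~\ref{lem:product} and iterated use of (C2), and the globalization is obtained from the Mayer--Vietoris LES-triangles of Theorem~\ref{thm:Nisnevich} together with the five-lemma and Yoneda, exactly as in Step~III of \S\ref{sec:thm}. The only difference is organizational: the paper packages the descent step by invoking ``an appropriate variant of Proposition~\ref{prop:reduction}'', whereas you carry out an explicit induction on the size of a trivializing cover; your version is in fact a bit more precise, since an affine fibration over an affine base need not be globally trivial, so the literal (A1) of Proposition~\ref{prop:reduction} does not apply and some reformulation (such as yours) is needed anyway. The technical point you flag about obtaining a strict morphism between the two short exact sequences underlying Theorem~\ref{thm:Nisnevich} is handled, both here and in the paper, by the evident commutativity of pull-back along $f$ with restriction to Zariski opens at the level of dg categories.
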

\begin{proof}
  Let us denote by $d$ the relative dimension of $f$. Using an appropriate variant of Proposition
  \ref{prop:reduction}, it suffices to verify the following
  conditions: 
\begin{itemize}
\item[(A1)] The morphism $E(Y) \to E(Y \times \bbA^d)$, induced by the projection, is invertible;
\item[(A2)] Let $V_1\cup V_2 =Y$ be a Zariski cover of $Y$. If the morphisms $E(f^\ast_{1})$, $E(f_2^\ast)$, and $E(f_{12}^\ast)$ are invertible, then $E(f^\ast)$ is also invertible.
\end{itemize}
By Lemma  \ref{lem:product}, we have an isomorphism $E(Y\times
\bbA^d)\simeq E(Y;\perf_\dg(\bbA^d))$. Therefore,
condition (A1) follows from the Morita equivalence $k[t]^{\otimes
  d}\to \perf_\dg(\bbA^d)$ and from an iterated application of
condition (C2). In order to prove condition (A2), consider the
following commutative diagram
\begin{equation*}\label{eq:bigsquare22}
\xymatrix@C=2.7em@R=2.5em{
E(X) \ar[rrr]^-{E(p_1^\ast)} \ar[ddd]_-{E(p_2^\ast)}& & & E(X_1) \ar[ddd] \\
& E(Y) \ar[ul]_-{E(f^\ast)}  \ar[r]^-{E(p_1^\ast)} \ar[d]_-{E(p_2^\ast)}& E(V_1) \ar[ur]^-{E(f^\ast_1)} \ar[d] & \\
& E(V_2) \ar[dl]_-{E(f_2^\ast)} \ar[r] & E(V_{12}) \ar[dr]^-{E(f^\ast_{12})} & \\
E(X_2) \ar[rrr] & & & E(X_{12})
}
\end{equation*}
in the triangulated category $\cT$, where $X_i:=f^{-1}(V_i)$ and
$X_{12}:=f^{-1}(V_{12})$.  Thanks to Theorem \ref{eq:Nisnevich}, the outer and
inner squares give rise to  ``Mayer-Vietoris'' LES-triangles. Hence, a proof similar to the one of (Step III: General case)
shows that if $E(f^\ast_{1})$, $E(f_2^\ast)$, and $E(f_{12}^\ast)$ are invertible,
then $E(f^\ast)$ is also invertible.
\end{proof}
The filtration \eqref{eq:filtration}, combined with the isomorphisms $\mathrm{U}(Y_i) \simeq \mathrm{U}(X_i\backslash X_{i-1})$ provided by Proposition \ref{prop:fibrations}, gives rise to the following Gysin triangles in $\Mot(k)$
\begin{eqnarray}
\label{eq:cellular}
  \mathrm{U}(Y_i) \to \mathrm{U}(X\backslash X_{i-1}) \to \mathrm{U}(X\backslash X_i) \stackrel{\partial}{\too} 
  \Sigma U(Y_i)&& 0 \leq i \leq n-1\,.
\end{eqnarray}
We will prove by descending of induction on $j$ for $-1\leq j\leq n-1$
that $\mathrm{U}(X\backslash X_j) \simeq
\bigoplus_{i=j+1}^n\mathrm{U}(Y_i)$.  For $j=n-1$ this boils down
to $\mathrm{U}(X\backslash X_{n-1})=\mathrm{U}(Y_n)$ which has already been proved. Assume now that $\mathrm{U}(X\backslash X_j)\simeq \bigoplus_{i=j+1}^n \mathrm{U}(Y_i)$. Then, the distinguished Gysin
triangle \eqref{eq:cellular} for $i=j$ becomes
\begin{equation}
\label{eq:split}
\mathrm{U}(Y_j) \too \mathrm{U}(X\backslash X_{j-1}) \to  \bigoplus_{i=j+1}^n \mathrm{U}(Y_i) \stackrel{\partial}{\too} \Sigma \mathrm{U}(Y_j)\,.
\end{equation}
Since the schemes $Y_i$'s are smooth projective, Proposition \ref{prop:K-theory1} implies 
that $\partial=0$. Therefore, the distinguished triangle \eqref{eq:split} splits and gives rise to an isomorphism $\mathrm{U}(X\backslash X_{j-1})\simeq \bigoplus_{i=j}^n \mathrm{U}(Y_i)$. This completes the proof of the induction step.

\medskip

According  to Lemma \ref{lem:invertible}, the additive functor $\overline{\mathrm{U}}$ (see \S\ref{sec:universal}) restricts to an equivalence
  between the full subcategories of $\Hmo_0(k)$ and $\Mot(k)$ spanned by
the  objects $\mathrm{U}_{\mathrm{add}}(X)$ and $\mathrm{U}(X)$, respectively, where $X$
  runs through the smooth proper $k$-schemes. As a consequence, we have also an induced isomorphism $\mathrm{U}_{\mathrm{add}}(X)\simeq \bigoplus_{i=0}^n \mathrm{U}_{\mathrm{add}}(Y_i)$ in the additive category $\Hmo_0(k)$. The proof follows now from the fact that $F$, being additive, factors through $\mathrm{U}_{\mathrm{add}}$.


\end{document}

\end{proof}